\renewcommand*{\backref}[1]{}
\renewcommand*{\backrefalt}[4]{[{\tiny%
    \ifcase #1 Not cited.%
          \or Cited on page~#2.%
          \else Cited on pages #2.%
    \fi%
    }]}
\theoremstyle{plain}
\newtheorem{sidethm}{Side Theorem}[section]
\newtheorem{sideques}[sidethm]{Side Question}
\newtheorem{theorem}{Theorem}[section] 
\newtheorem{lemma}[theorem]{Lemma} 
\newtheorem{proposition}[theorem]{Proposition} 
\newtheorem{corollary}[theorem]{Corollary} 
\newtheorem{question}[theorem]{Question} 
\theoremstyle{definition}
\newtheorem{definition}[theorem]{Definition} 
\newtheorem{example}[theorem]{Example} 
\theoremstyle{remark}
\newtheorem{sidermk}[sidethm]{Side Remark}
\newtheorem{remark}[theorem]{Remark} 
\newtheorem*{obs*}{Observation}
\newtheorem*{prop*}{Proposition}
\newcommand{\FF}{\mathbb{F}}
\newcommand{\QQ}{\mathbb{Q}}
\newcommand{\ZZ}{\mathbb{Z}}
\newcommand{\rank}{\operatorname{rank}}
\newcommand{\Wlog}{without loss of generality}
\newcommand{\ord}{\operatorname{ord}}
\newcommand{\card}[1]{\lvert{#1}\rvert}
\newcommand{\eps}{\epsilon}
\newcommand{\Spec}{\operatorname{Spec}}
\newcommand{\inject}{\operatorname{\hookrightarrow}} 
\newcommand{\surject}{\operatorname{\twoheadrightarrow}} 
\newcommand{\Gal}{\operatorname{Gal}} 
\newcommand{\qchar}[2]{\left(\frac{#1}{#2}\right)} 
\newcommand{\tqchar}[2]{(\frac{#1}{#2})} 
\newcommand{\Cl}{\operatorname{Cl}} 
\DeclareFontFamily{U}{wncy}{}
\DeclareFontShape{U}{wncy}{m}{n}{<->wncyr10}{}
\DeclareSymbolFont{mcy}{U}{wncy}{m}{n}
\DeclareMathSymbol{\Sha}{\mathord}{mcy}{"58}
\newcommand{\map}{\operatorname}
\newcommand{\mcal}{\mathcal}
\newcommand{\mf}{\mathfrak}
\newcommand{\inmatrix}[1]{\left[\begin{smallmatrix} #1 \end{smallmatrix}\right]}
\newcommand{\etale}{\'etal\@ifstar{\'e}{e\xspace}}
\newcommand{\Redei}{R\'{e}dei\xspace}
\newcommand{\Gaschutz}{Gasch\"{u}tz\xspace}
\newcommand{\Frohlich}{Fr\"{o}hlich\xspace}
\newcommand{\Furtwangler}{Furtw\"{a}ngler\xspace}
\begin{document}


\includeversion{arxiv}
\excludeversion{submit}
\includeversion{disclaimer}
\excludeversion{markednewpar}


\title[Martinet's question on $2$-class field towers]{On Hilbert $2$-class fields and $2$-towers of imaginary quadratic number fields}

\date{\today}

\author[Victor Y. Wang]{Victor Y. Wang}
\address{Department of Mathematics, Massachusetts Institute of Technology, \mbox{Cambridge, MA 02139}, United States}
\email{vywang@mit.edu}

\subjclass[2010]{11R37, 11R29, 11R32, 11R11, 11R27}
\keywords{Hilbert $2$-class field tower; $2$-class group; R\'{e}dei matrix; class field theory; Golod--Shafarevich inequality}
\begin{disclaimer}\thanks{v3: incorporated final referee suggestions and corrections; shorter version to appear as J. Number Theory 160 (2016), 492--515. v2: switched to unmarked formatting option in \texttt{versions.sty}, for better readability; now 24 pages. v1: Version for the arXiv, with 25 pages. Shorter version with 19 pages (currently available at \url{https://www.overleaf.com/read/rcfjpsqrxvgt}), with fewer details and SAGE code examples. Comments welcome on either.}\end{disclaimer}
\begin{submit}\thanks{Shorter 
version (currently available at \url{https://www.overleaf.com/read/rcfjpsqrxvgt}), with fewer details and SAGE code examples. Longer version available at \url{http://arxiv.org/abs/1508.06552}.}\end{submit}


\begin{abstract}
Inspired by the Odlyzko root discriminant and Golod--Shafarevich $p$-group bounds, Martinet (1978) asked whether an imaginary quadratic number field $K/\mathbb{Q}$ must always have an infinite Hilbert $2$-class field tower when the class group of $K$ has $2$-rank $4$, or equivalently when the discriminant of $K$ has $5$ prime factors. No negative results are known. Benjamin (2001, 2002) and Sueyoshi (2004, 2009, 2010) systematically established infinite $2$-towers for many $K$ in question, by casework on the associated R\'{e}dei matrices. Others, notably Mouhib (2010), have also made progress, but still many cases remain open, especially when the class group of $K$ has small $4$-rank.

Recently, Benjamin (2015) made partial progress on several of these open matrices when the class group of $K$ has $4$-rank $1$ or $2$. In this paper, we partially address many open cases when the $4$-rank is $0$ or $2$, affirmatively answering some questions of Benjamin. We then investigate barriers to our methods and ask an extension question (of independent interest) in this direction. Finally, we suggest places where speculative refinements of Golod--Shafarevich or group classification methods might overcome the ``near miss'' inadequacies in current methods.
\end{abstract}

\maketitle


\section{Introduction}

We first review some notation and background, and then outline our paper in Section \ref{SUBSEC:own-paper-outline}.

\subsection{Rank, field inclusion, class group, and Hilbert class field notation}

For a prime power $p^i > 1$ and a finitely generated abelian group $A$, let $d_{p^i}(A)\colonequals \dim_{\FF_p} A^{p^{i-1}}/A^{p^i} < \infty$ (which we will often abbreviate as $d_{p^i} A$ for convenience) denote the (generalized) \emph{$p^i$-rank}.

For fields $F$ and $E$, we mean by $F\le E$ (or $E\ge F$) the existence of a field embedding $F\inject E$.

Given a number field $K$, let $\Cl(K)$ and $\Cl^+(K)$ denote the \emph{wide} and \emph{narrow} (or \emph{strict}) ideal class groups, respectively.\begin{arxiv} Specifically, let $I_K$ denote its group of nonzero fractional ideals, $P_K$ the subgroup of principal fractional ideals in $I_K$, and $P^+_K$ the subgroup of \emph{totally positive} fractional ideals in $P_K$. Then recall $\Cl(K) \colonequals I_K/P_K$ and $\Cl^+(K) \colonequals I_K/P^+_K$. The notions coincide when $K$ is totally imaginary, but in general, we only know that $\Cl(K)$ is the quotient $\Cl^+(K) / [P_K / P^+_K]$ of $\Cl^+(K)$. In particular, if $K/\QQ$ is quadratic, then $[P_K:P^+_K]$ equals $1$ if and only if $K$ is imaginary or $x^2 - \Delta_K y^2 = -4$ has an integer solution; otherwise it equals $2$ (a \href{https://en.wikipedia.org/wiki/Fundamental_unit_(number_theory)\#Real_quadratic_fields}{sufficient, but not necessary, criterion} being that $\Delta_K$ has a $3\pmod{4}$ prime divisor).\end{arxiv}

Let $K^1$ (resp. $K_+^1$) be the (resp. \emph{narrow}) \emph{Hilbert class field}\processifversion{arxiv}{ of a number field $K$}, i.e. the maximal abelian extension of $K$ unramified everywhere (resp. outside of infinity). The reciprocity law of ray class field theory gives abelian group isomorphisms $\Gal(K^1 / K) \simeq \Cl(K)$ and $\Gal(K_+^1 / K) \simeq \Cl^+(K)$. Now fix a prime $p$. Let $K_{(p)}^1 \le K^1$ \processifversion{arxiv}{(resp. $K_{+,(p)}^1 \le K_+^1$)} be the \processifversion{arxiv}{(resp. \emph{narrow})} \emph{Hilbert $p$-class field} of $K$, i.e. the maximal abelian \emph{$p$-extension} (i.e. Galois extension with Galois group a $p$-group) of $K$ unramified everywhere\processifversion{arxiv}{ (resp. outside of infinity)}. Then looking at $p$-primary parts yields $\Gal(K_{(p)}^1 / K) \simeq \Cl_p(K)$ \processifversion{arxiv}{and $\Gal(K_{+,(p)}^1 / K) \simeq \Cl^+_p(K)$} by Galois theory.

\subsection{Background: Hilbert class field towers and Golod--Shafarevich}

One may iterate the Hilbert class field construction (i.e. $K^0 \colonequals K$ and $K^{i+1} \colonequals (K^i)^1$ for $i\ge0$) to obtain the \emph{$n$th Hilbert class fields} $K^n$ for $n\ge0$, which together form the \emph{Hilbert class field tower} (which we will refer to as the \emph{$p$-tower}), with \emph{top} $K^\infty \colonequals \bigcup_{n\ge0} K^n$. We call the tower \emph{finite} or \emph{infinite} according as $[K^\infty:K] < \infty$ or $[K^\infty:K] = \infty$; by \cite[Proposition 1]{Roquette}, the tower is finite if and only if $\Cl(L) = 1$ for some (not necessarily unramified or Galois) finite extension $L/K$. We analogously define $K_{(p)}^n$, the \emph{$p$-class field tower}, the \emph{top} $K_{(p)}^\infty$, and \emph{(in)finiteness}; here, the $p$-tower is finite if and only if $\Cl_p(L) = 1$ for some finite extension $L/K$ \cite[Proposition 2]{Roquette}. \begin{arxiv}In particular, these towers are Galois by the maximality of Hilbert ($p$-)class fields.\end{arxiv} (For other properties of class field towers, we refer to \processifversion{arxiv}{Lemmermeyer's survey} \cite{LemmermeyerCFT}, \processifversion{arxiv}{Ershov's survey} \cite{Ershov}, and \processifversion{arxiv}{Koch's book} \cite{Koch}.) For use in Section \ref{SUBSEC:summary-of-previous-methods-progress-Martinet's-question} and Proposition \ref{PROP:Schmithals-2-class-field-splitting-idea}, we state the following standard result, proved by induction.

\begin{proposition}[Cf. Roquette {\cite[proofs of Propositions 1 and 2]{Roquette}}]
\label{PROP:tower-inductive-containment-Roquette}

Fix a number field $K$ and a prime $p$. Let $F/\QQ$ be a finite subfield of $K_{(p)}^\infty/\QQ$. Then $F_{(p)}^\infty \le K_{(p)}^\infty$.

\end{proposition}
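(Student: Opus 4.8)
The plan is to prove, by induction on $n \ge 0$, that each layer $F_{(p)}^n$ of the $p$-class field tower of $F$ satisfies $F_{(p)}^n \le K_{(p)}^\infty$; taking the union over all $n$ then gives $F_{(p)}^\infty = \bigcup_{n\ge0} F_{(p)}^n \le K_{(p)}^\infty$, as desired. The base case $n=0$ is exactly the hypothesis $F \le K_{(p)}^\infty$. For the inductive step, I would first record the elementary but essential fact that $F_{(p)}^n / \QQ$ is a \emph{finite} extension: this follows by a secondary induction, since $F/\QQ$ is finite and each successive layer $F_{(p)}^{i+1}/F_{(p)}^i$ has Galois group $\Gal(F_{(p)}^{i+1}/F_{(p)}^i) \simeq \Cl_p(F_{(p)}^i)$, which is finite because the ideal class group of a number field is finite.

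Granting this, suppose $F_{(p)}^n \le K_{(p)}^\infty$. Since $F_{(p)}^n/\QQ$ is finite and $K_{(p)}^\infty = \bigcup_{m\ge0} K_{(p)}^m$ is the union of the increasing chain of finite layers $K_{(p)}^m$, the field $F_{(p)}^n$ must already be contained in some single layer $K_{(p)}^m$. It then suffices to prove the following local step: if $L \le K_{(p)}^m$ for a number field $L$, then $L_{(p)}^1 \le K_{(p)}^{m+1} = (K_{(p)}^m)_{(p)}^1$. Applying this with $L = F_{(p)}^n$ gives $F_{(p)}^{n+1} = (F_{(p)}^n)_{(p)}^1 \le K_{(p)}^{m+1} \le K_{(p)}^\infty$, completing the induction.

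To prove the local step, I would pass to the compositum $M \colonequals L_{(p)}^1 \cdot K_{(p)}^m$ and check that $M/K_{(p)}^m$ is an abelian $p$-extension unramified everywhere, so that maximality of the Hilbert $p$-class field of $K_{(p)}^m$ forces $M \le (K_{(p)}^m)_{(p)}^1$, whence $L_{(p)}^1 \le M \le K_{(p)}^{m+1}$. Since $L_{(p)}^1/L$ is Galois and $L \le K_{(p)}^m$, restriction gives a well-defined injection $\Gal(M/K_{(p)}^m) \inject \Gal(L_{(p)}^1/L)$, so $\Gal(M/K_{(p)}^m)$ is a subgroup of an abelian $p$-group and is therefore itself an abelian $p$-group.

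The main point to verify carefully — and the step I expect to require the most attention — is that $M/K_{(p)}^m$ is unramified at every place, finite and infinite. This is the standard principle that unramifiedness is preserved under base change: for a finite prime, the ramification index of a prime of $M$ over $K_{(p)}^m$ divides that of the corresponding prime of $L_{(p)}^1$ over $L$, which is $1$; for a real infinite place, a real place of $K_{(p)}^m$ restricts to a real place of $L$ that stays real in $L_{(p)}^1$ (by unramifiedness of $L_{(p)}^1/L$ at infinity), so the relevant completions remain real in the compositum. Here it is essential that throughout we use the \emph{wide} Hilbert $p$-class field (unramified including at infinity), matching the definition of $K_{(p)}^\infty$. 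With these verifications in hand the proposition follows.
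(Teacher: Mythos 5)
Your proposal is correct and follows essentially the same route as the paper: locate the finite field inside a single finite layer $K_{(p)}^m$ of the tower, then run the same inductive ``relativization'' step, showing that the compositum $L_{(p)}^1 \cdot K_{(p)}^m$ is an unramified abelian $p$-extension of $K_{(p)}^m$ (unramifiedness preserved under base change, including at the infinite places) and hence lies in $K_{(p)}^{m+1}$ by maximality of the Hilbert $p$-class field. The only differences are cosmetic --- you re-descend to a finite layer at each inductive step rather than fixing $n$ once and proving $F_{(p)}^i \le K_{(p)}^{n+i}$, and you spell out details (finiteness of each $F_{(p)}^n/\QQ$, the restriction injection of Galois groups, the archimedean case) that the paper leaves to the citation of Roquette and Neukirch.
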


\begin{markednewpar}\leavevmode\end{markednewpar}\begin{arxiv}

\begin{proof}
Field $F/\QQ$, being a finite subfield of $K_{(p)}^\infty/\QQ$, must be contained in some finite step $K_{(p)}^n$ of the tower (i.e. $F \le K_{(p)}^n$ for some finite $n\ge0$). Then an inductive argument following \cite[proofs of Propositions 1 and 2]{Roquette} shows that $F_{(p)}^i \le K_{(p)}^{n+i}$ for $i\ge0$, so $F_{(p)}^\infty \le K_{(p)}^\infty$. To illustrate the first step of induction, note that $F_{(p)}^1 / F$ is a $p$-extension (in particular, Galois), so $F_{(p)}^1 K_{(p)}^n$ is a well-defined $p$-extension of $K_{(p)}^n$; this ``relativization'' also preserves unramified-ness (for the nonarchimedean valuations, see e.g. Neukirch's textbook \cite[p. 153, Ch. II, Sec. 7, Proposition 7.2]{Neukirch}).
\end{proof}
\end{arxiv}

In the 1960s, Golod and Shafarevich gave a sufficient criterion---the contrapositive of the following theorem---for the infinitude of $p$-towers.

\begin{theorem}[Vinberg/\Gaschutz refinement of the Golod--Shafarevich inequality \cite{GS}; see \cite{Roquette}]
\label{THM:Golod--Shafarevich}

Fix a number field $K$ and a prime $p$. Suppose $K$ has finite $p$-tower. Then $d_p\Cl(K) < 2 + 2\sqrt{1 + d_p \mcal{O}_K^\times}$, where $\mcal{O}_K^\times$ denotes the unit group of the ring of integers $\mcal{O}_K$.
\end{theorem}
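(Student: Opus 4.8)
The plan is to reinterpret the finite $p$-tower as a finite $p$-group and then play the purely group-theoretic Golod--Shafarevich inequality against the arithmetic constraints coming from class field theory. Set $G \colonequals \Gal(K_{(p)}^\infty/K)$. Since each step $K_{(p)}^{n+1}/K_{(p)}^n$ is a $p$-extension, $K_{(p)}^\infty/K$ is a pro-$p$ extension and $G$ is a pro-$p$ group; the finiteness hypothesis means $[K_{(p)}^\infty:K] < \infty$, so in fact $G$ is a \emph{finite} $p$-group. Writing $d(G) \colonequals \dim_{\FF_p} H^1(G,\FF_p)$ for its minimal number of generators and $r(G) \colonequals \dim_{\FF_p} H^2(G,\FF_p)$ for its minimal number of relations in a pro-$p$ presentation, the whole statement reduces to pinning down $d(G)$ and bounding $r(G)$, and then solving a quadratic.

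First I would compute the generator count. The abelianization $G^{\mathrm{ab}}$ is the Galois group of the maximal unramified abelian $p$-extension of $K$, which is exactly $K_{(p)}^1$, so the reciprocity isomorphism $\Gal(K_{(p)}^1/K) \simeq \Cl_p(K)$ gives $d(G) = \dim_{\FF_p}(G^{\mathrm{ab}}/p) = d_p\Cl(K)$. Next, and this is the genuinely hard arithmetic input, I would establish the relation bound $r(G) \le d(G) + d_p\mcal{O}_K^\times$. This requires computing (or at least bounding) $\dim_{\FF_p} H^2(G,\FF_p)$ for the Galois group of the everywhere-unramified $p$-extension, which is carried out by class field theory as in Roquette's treatment \cite{Roquette}: one uses the cohomological interpretation of $H^2$ as relations, the inflation--restriction sequence, and the structure of the idele class group modulo units. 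The appearance of $d_p\mcal{O}_K^\times$ is a shadow of Dirichlet's unit theorem, since $d_p\mcal{O}_K^\times = r_1 + r_2 - 1 + d_p\mu_K$ records the unit rank together with a possible $p$-power-roots-of-unity contribution. I expect this relation bound to be the main obstacle; the remaining steps are essentially mechanical once it is in hand.

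Finally, I would invoke the Vinberg/\Gaschutz-refined Golod--Shafarevich inequality for finite $p$-groups: every finite $p$-group satisfies the \emph{strict} inequality $r(G) > d(G)^2/4$. This is the quoted group-theoretic input \cite{GS}, proved by a generating-function argument on the graded pieces of the augmentation ideal of $\FF_p[G]$; the refinement from the cruder constant $(d-1)^2/4$ to $d^2/4$ is precisely what is needed to land on the sharp bound below. Combining the two inequalities with $d \colonequals d_p\Cl(K)$ and $t \colonequals d_p\mcal{O}_K^\times$ yields $d^2/4 < r(G) \le d + t$, hence $d^2 - 4d - 4t < 0$. The larger root of the quadratic $x^2 - 4x - 4t$ is $2 + 2\sqrt{1+t}$, so the strict inequality forces $d < 2 + 2\sqrt{1+t}$, which is exactly $d_p\Cl(K) < 2 + 2\sqrt{1 + d_p\mcal{O}_K^\times}$, as claimed.
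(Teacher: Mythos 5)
Your proposal is correct and follows essentially the same route the paper relies on: the paper states this theorem without proof, citing \cite{GS} and \cite{Roquette}, and your reduction---identifying $G = \Gal(K_{(p)}^\infty/K)$ as a finite $p$-group, computing $d(G) = d_p\Cl(K)$ via reciprocity, invoking the Shafarevich relation bound $r(G) \le d(G) + d_p\mcal{O}_K^\times$ from class field theory as in \cite{Roquette}, applying the Vinberg/\Gaschutz refinement $r > d^2/4$, and solving $d^2 - 4d - 4t < 0$ for the root $2 + 2\sqrt{1+t}$---is precisely the standard argument of those sources, including your correct observation that the cruder original constant would only yield $d < 3 + 2\sqrt{2+t}$. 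One small repair: the strict inequality $r(G) > d(G)^2/4$ fails for the trivial group (it reads $0 > 0$), so you should dispose of the case $G = 1$ separately, where the conclusion $d_p\Cl(K) = 0 < 2 + 2\sqrt{1 + d_p\mcal{O}_K^\times}$ holds trivially.
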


\begin{markednewpar}\leavevmode\end{markednewpar}\begin{arxiv}
\begin{sidermk}
Actually, if $L \colonequals K_{(p)}^\infty$, then $\mcal{O}_K^\times$ can be replaced by the quotient $\mcal{O}_K^\times / N_{L/K}\mcal{O}_L^\times$, which Roquette \cite{Roquette} calls the \emph{norm factor group}. But in practice this seems unwieldy.
\end{sidermk}


\begin{sidermk}[General class field towers]
It would be interesting to try to find an infinite class field tower not ``built'' from an infinite $p$-class field tower for some prime $p$. (To the best of our knowledge, while there are examples of number fields with infinite class field tower but finite $p$-class field tower for all primes $p$, these examples still depend on a \emph{closely related} $q$-class field tower being infinite for some prime $q$.)
\end{sidermk}
\end{arxiv}

It directly follows from Theorem \ref{THM:Golod--Shafarevich} that the $2$-tower is infinite for any imaginary quadratic number field $K$ with $d_2\Cl(K) \ge 5$. Martinet \cite{Martinet} (1978), inspired by the Odlyzko \cite{Odlyzko} (1976) root discriminant bounds, asked whether the same holds when $d_2\Cl(K) = 4$, or equivalently when $K$ has $5$ ramified primes, i.e. the discriminant $\Delta_K$ factors into $5$ prime discriminants $p^*$ (reviewed in Section \ref{SEC:genus-theory} along with $2$-ranks and genus theory).

\subsection{Previous progress on Martinet's question}
\label{SUBSEC:summary-of-previous-methods-progress-Martinet's-question}

No negative results are known. Currently, all of the best (positive) results on Martinet's question, to the author's knowledge, stem from applications of the Golod--Shafarevich bound to unramified $2$-extensions of $K$, combined with genus theory bounds. The point is that $K = \QQ(\sqrt{d})$ has infinite $2$-tower if (and only if) there exists a number field $L/\QQ$ satisfying both of the following conditions.
\begin{enumerate}
\item The well-defined compositum $KL = L(\sqrt{d})$ has infinite $2$-tower. This is guaranteed---in view of Corollary \ref{COR:relative-genus-theory-bounds-2-rank} (below) on relative genus theory---by Theorem \ref{THM:Golod--Shafarevich} (above), if $KL/L = L(\sqrt{d})/L$ is a sufficiently ramified quadratic extension.

\item $KL\le K_{(2)}^\infty$ (in which case $K \le KL\le K_{(2)}^\infty$ yields $K_{(2)}^\infty \le (KL)_{(2)}^\infty \le K_{(2)}^\infty$ by Proposition \ref{PROP:tower-inductive-containment-Roquette} (above), so $(KL)_{(2)}^\infty = K_{(2)}^\infty$). This is equivalent to $L\le K_{(2)}^\infty$.
\end{enumerate}

\begin{remark}
\label{RMK:non-Galois-choices-of-L?}

The second condition is guaranteed, for instance, when $KL/K$ is an everywhere unramified $2$-extension---or equivalently (by solvability of $2$-extensions), when $KL/K$ is Galois \processifversion{arxiv}{(certainly guaranteed if $L/\QQ$ is Galois)} and $L\le K_{(2)}^\infty$. To our knowledge, all of the best results on Martinet's question choose Galois subfields $L/\QQ$ of $K_{(2)}^\infty / K$, so it might be fruitful to look at different choices of $L$.
\end{remark}

These observations immediately lead to the extended criterion Proposition \ref{PROP:main-idea-extension-ramification-quadratic} (below) for infinite $2$-towers. We are not aware of any particularly usable improvements, but we speculate some in Section \ref{SEC:improvements-of-GS?}. 
We now summarize the best previous (positive) results, in terms of the choice of $L$ in Proposition \ref{PROP:main-idea-extension-ramification-quadratic}.

\begin{itemize}
\item Mouhib \cite{Mouhib} (2010), improving on Sueyoshi \cite{Sueyoshi1neg} (2004), gave a uniformly positive answer to Martinet's question when $\Delta_K = p_1^*\cdots p_5^*$ (with $p^*$ as defined in Section \ref{SEC:genus-theory}) has exactly $1$ \emph{negative} prime discriminant. Say $p_5^* < 0$; then Mouhib took $L$ to be---with modifications in some cases---the well-defined decomposition field of $p_5$ in the elementary abelian $2$-extension $\QQ(\sqrt{p_1^*},\ldots,\sqrt{p_4^*})$. However, it seems difficult to extend the technique---which relies on the total realness of $L$---to the cases of $3$ or $5$ negative prime discriminants.

\end{itemize}
Recall that one may also categorize quadratic number fields by the \Redei matrix (see Section \ref{SUBSEC:Redei-matrix-intro})\begin{arxiv}, motivated by the fact that a rational prime $p$ unramified in $\QQ(\sqrt{p_1^*\cdots p_n^*})$ splits if and only if $\tqchar{p_1^*}{p}\cdots\tqchar{p_n^*}{p} = +1$ (even if $p=2$)\end{arxiv}. The following results build on techniques of Martinet \cite{Martinet}, with $L$ a (Galois) subfield (usually quadratic or biquadratic) of the \emph{narrow genus field} $\QQ(\sqrt{p_1^*},\ldots,\sqrt{p_5^*})/\QQ$ of $K/\QQ$.

\begin{itemize}
\item Hajir \cite{Hajir,HajirCorrection} (1996, 2000), improving on a theorem of Koch \cite{KochConcrete} (1969), established infinite $2$-towers when $d_4\Cl(K) \ge 3$, using Ramsey-type analysis of R\'{e}dei matrices with small rank. Lemmermeyer \cite{Lemmermeyer4class} used analogous methods to study \emph{real} quadratic fields.

\item Later, Benjamin \cite{Benjamin01,Benjamin02} (2001, 2002) and Sueyoshi \cite{Sueyoshi1neg,Sueyoshi5neg,Sueyoshi3neg} (2004, 2009, 2010 for one, five, three negative prime discriminants, respectively) systematically established infinite $2$-towers for many $5\times 5$ R\'{e}dei matrix cases, but still left many cases open (see \cite[Table 1]{Ben2015} for details)---especially for small $d_4\Cl(K)$.
\end{itemize}

However, there seems to be more room for exploration. For example, in this paper we focus on Schmithals' idea \cite{Schmithals} (1980) of looking at $L = F_{(2)}^1$ for a quadratic field $F$---see Proposition \ref{PROP:Schmithals-2-class-field-splitting-idea} below---motivated by the decomposition law of class field theory. Recently, by taking $F$ with $4$ prime discriminants (see \cite[Lemma 9]{Ben2015}---where the preceding text ``$K$ is the compositum of $k^1$ and $F$'' should instead be ``$K$ is the compositum of $k$ and $F^1$'', and $F$ should be specified to satisfy ``$d_F\mid d_k$ so that $K/k$ is unramified''---and the more specific applications \cite[Lemmas 10 and 11]{Ben2015}, where to correct a sign error it should say ``$\QQ(\sqrt{-5.11.401})$'' in the second listing of $k$ in \cite[Remark 6]{Ben2015}), Benjamin \cite{Ben2015} (2015) established infinite $2$-towers in certain sub-cases of several of these open matrix cases with $d_4\Cl(K) \in\{1,2\}$, and explained in particular the failure of his methods for $d_4\Cl(K) = 0$.

\subsection{Outline of paper}
\label{SUBSEC:own-paper-outline}

In contrast to Benjamin \cite{Ben2015}, we take $F$ (in Proposition \ref{PROP:Schmithals-2-class-field-splitting-idea}) with $3$ or $2$ prime discriminants to make some progress when $d_4\Cl(K) \in\{0,2\}$, in the hopes of identifying some of the ``most difficult'' remaining cases of Martinet's question. Specifically, we give some new results in Section \ref{SEC:key-lemmas}, and present our concrete applications to open \Redei matrices in Section \ref{SEC:concrete-new-2-towers-results}. This provides affirmative answers to Benjamin's questions \cite[Questions 1, 2, and 5]{Ben2015} on the existence of new (to the best of our knowledge) imaginary quadratic number fields $K$ with $d_2\Cl(K) = 4$ and $d_4\Cl(K) \in\{0,2\}$.



\begin{remark}

In private correspondence, Benjamin informed us that \cite[Question 5]{Ben2015} has minor typos; the correct version is ``Do there exist new imaginary quadratic number fields $k$ with $\rank C_{k,2} = 4$ that have infinite $2$-class field tower in the case when the $4$-rank of $C_k$ is $0$\ldots?'' with a remark that ``such new fields do not satisfy \cite[Lemma 10]{Ben2015}.'' Also, ``Lemma 11'' in \cite[Question 4]{Ben2015} should be replaced with ``Lemma 10''.
\end{remark}

Our attempts at applying the decomposition law (Proposition \ref{PROP:decomposition-law}) to Proposition \ref{PROP:Schmithals-2-class-field-splitting-idea} naturally lead to Section \ref{SEC:prime-splitting-HCF-barrier}, where we investigate barriers to our methods---specifically, ``insufficient'' prime splitting in $2$-class fields---and establish them as consequences of the ``classical principal genus theorem'' over $\QQ$. We also ask an extension question (of independent interest) in this direction.

Section \ref{SEC:genus-theory} reviews, in particular, the usual relative genus theory estimates on $2$-ranks of class groups, with an additional remark (Remark \ref{RMK:improvement-4-rank-relative-genus-theory}) on potentially helpful additional information from the ambiguous class number formula. Section \ref{SEC:improvements-of-GS?} discusses further possible research directions.

\section{Background: prime discriminants, \texorpdfstring{$2$}{2}-class groups, and genus theory}
\label{SEC:genus-theory}

Experts can quickly skim this section for notation and review of prime discriminants, genus theory, and \Redei matrices.

Recall that for any quadratic number field $K$ with discriminant $\Delta_K$ (so that $K = \QQ(\sqrt{\Delta_K})$) and $t$ (finite) ramified primes, we have a unique factorization $\Delta_K = p_1^*\cdots p_t^*$ into $t$ pairwise coprime \emph{prime (power) discriminants} $p_i^*$ (defined so that $2^*\in\{+8,-8,-4\}$ and $p^* = (-1)^{(p-1)/2} p \equiv 1\pmod{4}$ for odd primes $p$). By Gauss' principal genus theorem, the $2$-ranks of the narrow and wide class groups are simply $d_2 \Cl^+(K) = t-1$ and $d_2 \Cl(K) = t - 1 - [\text{$\Delta_K>0$ and $p_i^* < 0$ for some $i$}]$, where $[*]$ is an indicator function (defined to be $1$ if $*$ holds, and $0$ otherwise).

We now review a relative generalization of these results via the ambiguous class number formula, with an additional remark (Remark \ref{RMK:improvement-4-rank-relative-genus-theory}) on potentially helpful additional information.


\subsection{Relative genus theory}

\begin{theorem}[Cf. Lemmermeyer \cite{LemmermeyerAmbig}, Gras {\cite[p. 180, Remark 6.2.3; p. 383, Remark 4.2.4]{Gras}}, Emerton \cite{Emerton}]
\label{THM:ambig-class-number-formula}

If $L/K$ is cyclic, then we have the \emph{ambiguous class number formula}
\[ \card{\Cl(L)^{\Gal(L/K)}} = \card{\Cl(K)} \cdot \frac{\prod_v e_v}{[L:K]\cdot[\mcal{O}_K^\times : \mcal{O}_K^\times\cap N_{L/K}(L^\times)]},\]
where the product of ramification indices $e_v$ runs over all finite and infinite places $v$ of $K$.
\end{theorem}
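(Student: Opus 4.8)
The plan is to count the ambiguous classes $\Cl(L)^G$, where $G \colonequals \Gal(L/K)$ is cyclic of order $n \colonequals [L:K]$, by running Galois cohomology of the finite cyclic group $G$ against the defining exact sequence of the class group. Writing $I_L$ for the nonzero fractional ideals of $L$ and $P_L$ for the principal ones, I would start from
\[ 0 \to P_L \to I_L \to \Cl(L) \to 0 \]
and take the long exact sequence of $G$-cohomology. The crucial input is $H^1(G, I_L) = 0$: grouping the primes $\mf{P}$ of $L$ into $G$-orbits above the primes $\mf{p}$ of $K$ exhibits $I_L \cong \bigoplus_{\mf{p}} \operatorname{Ind}_{G_{\mf{p}}}^G \ZZ$, where $G_{\mf{p}}$ is a decomposition group, so Shapiro's lemma reduces to $H^1(G_{\mf{p}}, \ZZ) = \Hom(G_{\mf{p}}, \ZZ) = 0$. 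Hence the long exact sequence collapses to $0 \to I_L^G/P_L^G \to \Cl(L)^G \to H^1(G, P_L) \to 0$, giving the clean factorization
\[ \card{\Cl(L)^G} = [I_L^G : P_L^G]\cdot\card{H^1(G, P_L)}. \]

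The two factors I would then compute separately. For $H^1(G, P_L)$, I would feed the unit sequence $0 \to \mcal{O}_L^\times \to L^\times \to P_L \to 0$ into cohomology; Hilbert's Theorem 90 kills $H^1(G, L^\times)$, so $H^1(G, P_L)$ embeds as the kernel of $H^2(G, \mcal{O}_L^\times) \to H^2(G, L^\times)$. Using the periodicity $H^2 \cong \hat{H}^0$ for cyclic $G$, this identifies $\card{H^1(G, P_L)}$ with an index of norm subgroups of units inside $\mcal{O}_K^\times$, the first appearance of the unit-norm index $[\mcal{O}_K^\times : \mcal{O}_K^\times \cap N_{L/K}(L^\times)]$ in the denominator. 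For $[I_L^G : P_L^G]$, I would compare the ambiguous ideals $I_L^G$ (freely generated by $\mathfrak{a}_{\mf{p}} \colonequals \prod_{\mf{P}\mid\mf{p}}\mf{P}$) against the subgroup $\iota(I_K)$ of ideals extended from $K$: since $\mf{p}\mcal{O}_L = \mathfrak{a}_{\mf{p}}^{e_{\mf{p}}}$, the finite ramification indices enter as $[I_L^G : \iota(I_K)] = \prod_{\mf{p}\text{ finite}} e_{\mf{p}}$, while $\iota$ is injective with $\iota(I_K)/\iota(P_K) \cong \Cl(K)$, contributing the factor $\card{\Cl(K)}$.

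Assembling these pieces — and in particular accounting for the overall factor $[L:K]^{-1}$, the archimedean ramification indices, and reconciling the two different unit-norm groups that surface in the two computations (note that $P_L^G$ strictly contains $\iota(P_K)$ precisely because of capitulation and units) — is where I expect the real work to lie, since it forces in genuine class field theory: controlling $H^2(G, L^\times)$ requires the Hasse norm theorem together with the fact that the local invariants sum to zero, and the archimedean places must be tracked by hand to see that infinite ramification ($e_v = 2$ at real places of $K$ that complexify in $L$) lands in the product $\prod_v e_v$ over all places. The cleanest way to organize this bookkeeping, and the route I would ultimately take to avoid splitting into finite-versus-infinite casework, is to rerun the entire argument on the idele class group $\mcal{C}_L \colonequals \mathbb{A}_L^\times/L^\times$: its Herbrand quotient is computed place-by-place, with each local unit group contributing its local index so that globally the product $\prod_v e_v$ emerges uniformly, and comparing the resulting $\hat{H}^0$ and $H^1$ terms against the analogous data over $K$ yields the stated formula with the global unit-norm index appearing as the sole denominator correction. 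The main obstacle, then, is not any single step but the careful accounting showing that all local contributions coalesce into exactly $\prod_v e_v$ while the only surviving global correction is $[\mcal{O}_K^\times : \mcal{O}_K^\times\cap N_{L/K}(L^\times)]$.
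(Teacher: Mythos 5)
The paper itself does not prove Theorem \ref{THM:ambig-class-number-formula}; it quotes it as classical, with the proofs living in the cited references (Lemmermeyer, Gras, Emerton). Measured against that standard proof, the first half of your plan is exactly right and is the standard route: $H^1(G,I_L)=0$ by Shapiro's lemma, hence $\card{\Cl(L)^G}=[I_L^G:P_L^G]\cdot\card{H^1(G,P_L)}$; Hilbert 90 to push $H^1(G,P_L)$ into $\ker\bigl(H^2(G,\mcal{O}_L^\times)\to H^2(G,L^\times)\bigr)$; and $[I_L^G:\iota(I_K)]=\prod_{\mf{p}\text{ finite}}e_{\mf{p}}$ with $\iota(I_K)/\iota(P_K)\simeq\Cl(K)$ for the extension map $\iota$.

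The genuine gap is in the assembly, which you both defer and misdiagnose. First, the capitulation worry about $P_L^G\supsetneq\iota(P_K)$ is resolved by taking $G$-invariants of $1\to\mcal{O}_L^\times\to L^\times\to P_L\to 1$: Hilbert 90 gives $P_L^G/\iota(P_K)\simeq H^1(G,\mcal{O}_L^\times)$, so $[I_L^G:P_L^G]=\card{\Cl(K)}\cdot\prod_{\mf{p}\text{ finite}}e_{\mf{p}}\,/\,\card{H^1(G,\mcal{O}_L^\times)}$. Second, the factor $\prod_{v\mid\infty}e_v/[L:K]$ does not come from class field theory at all: it is the Herbrand quotient $q(\mcal{O}_L^\times)=\card{\hat{H}^0(G,\mcal{O}_L^\times)}/\card{\hat{H}^1(G,\mcal{O}_L^\times)}=\prod_{v\mid\infty}e_v/[L:K]$, i.e.\ Herbrand's unit theorem, proved by a lattice argument on the logarithmic embedding via the Dirichlet unit theorem---this is the one substantive ingredient your sketch never names. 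Third, your claim that controlling $H^2(G,L^\times)$ ``requires the Hasse norm theorem together with the fact that the local invariants sum to zero'' is incorrect: all that is needed is the literal kernel of $\hat{H}^0(G,\mcal{O}_L^\times)\to\hat{H}^0(G,L^\times)$, which is by definition $(\mcal{O}_K^\times\cap N_{L/K}L^\times)/N_{L/K}\mcal{O}_L^\times$. Combining these three points gives $\card{H^1(G,P_L)}/\card{H^1(G,\mcal{O}_L^\times)}=q(\mcal{O}_L^\times)/[\mcal{O}_K^\times:\mcal{O}_K^\times\cap N_{L/K}L^\times]$, and the stated formula drops out with no class field theory whatsoever; historically the ambiguous class number formula is a precursor of CFT, not a corollary. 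Finally, the proposed rerun on the idele class group $\mcal{C}_L$ is not a repair but an unexecuted and much heavier detour: relating $\Cl(L)^G$ to $\mcal{C}_L^G$ requires a further exact-sequence analysis against the unit ideles $U_LL^\times/L^\times$, the inputs you would invoke there ($H^1(G,\mcal{C}_L)=0$ and $[\mcal{C}_K:N_{L/K}\mcal{C}_L]=[L:K]$) are precisely the first and second inequalities of global CFT, and the archimedean bookkeeping you hoped to avoid reappears inside the local cohomology of the unit ideles anyway.
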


\begin{markednewpar}\leavevmode\end{markednewpar}\begin{arxiv}
\begin{sidermk}[Resemblance with genus class number formula]

We have the short exact sequence $\Cl(L)^{\Gal(L/K)} \inject \Cl(L) \surject \Cl(L)^{1-\sigma}$ induced by the surjection $I\pmod{P_L} \mapsto I/\sigma(I) \pmod{P_L}$. \emph{It is important here that $L/K$ is cyclic, so that a single generator $\sigma$ generates $\Gal(L/K)$.} In particular, $\card{\Cl(L)}/\card{\Cl(L)^{1-\sigma}} = \card{\Cl(L)^{\Gal(L/K)}}$.

Let $H_K,H_L$ denote Hilbert class fields of $K,L$, and $H{L/K}$ the \emph{relative genus field} of $L$ over $K$ (which, when $L/K$ is cyclic, hence abelian, is just the maximal everywhere unramified extension of $L$ abelian over $K$). Some Galois theory (as in Gras \cite[Remark 4.2.4]{Gras} or Cornell \cite[Propositions 3 and 4]{Cornell}) shows that $\Gal(H_L/K)/\Cl(L)^{1-\sigma}$ is the \emph{abelianization} $\Gal(H_L/K)^{\map{ab}} = \Gal(H_{L/K} / K)$, so that $H_{L/K}$ is the fixed field of (the normal subgroup) $\Cl(L)^{1-\sigma}$. In particular, $[H_{L/K}:K] = [H_L:K] / \card{\Cl(L)^{1-\sigma}} = [L:K][H_L:L]/\card{\Cl(L)^{1-\sigma}} = [L:K] \card{\Cl(L)}/\card{\Cl(L)^{1-\sigma}} = [L:K] \card{\Cl(L)^{\Gal(L/K)}}$. Thus in \emph{relative genus field $H_{L/K}$} language, $\card{\Cl(L)^G} = [H_{L/K}:K]/[L:K] = [H_{L/K}:H_K][H_K:K]/[L:K] = [H_{L/K}:H_K]\cdot \card{\Cl(K)}/[L:K]$---so the \emph{ambiguous class number formula} becomes
\[ [H_{L/K}:H_K] = \frac{\prod_v e_v}{[\mcal{O}_K^\times : \mcal{O}_K^\times\cap N_{L/K}(L^\times)]}. \]
This is close to the \emph{genus class number formula} going back to Gauss, Hilbert, and \Furtwangler, which holds for all abelian extensions $L/K$---see Lemmermeyer's survey \cite[p. 11]{LemmermeyerCFT}.

\end{sidermk}

\begin{sideques}
Is it true that $H_{L/K} / H_K$ is an elementary abelian $2$-extension if $[L:K] = 2$? We would want to show that for any $K$-automorphism $g\colon H_L\to H_L$ fixing $K'$ (i.e. $g|_{K'} = 1$ is trivial), that $g^2$ fixes $H_{L/K}$ (i.e. $g^2|_{H_{L/K}}$ is trivial). We may need the map $\Cl(L)\simeq \Gal(H_L/L) \inject \Gal(H_L/K) \surject \Gal(H_K/K)\simeq \Cl(K)$ to be sufficiently canonical.

\end{sideques}
\end{arxiv}

We do not know the precise origins of the following classical $2$-rank estimates. If $L/K$ is cyclic with Galois group $G$, consider the norm map $\phi\colon \Cl(L) \to \Cl(K)$ (sending $I\pmod{P_L}$ to $N_{L/K}(I)\pmod{P_K}$) and its restriction $\psi\colonequals \phi|_{\Cl(L)^G}$. Then $\ker\psi$ lies inside the $[L:K]$-torsion $\Cl(L)^G[[L:K]]$. Restricting $\psi$ further to the $2$-primary part $\Cl(L)^G[2^\infty]$ induces an injection
\[ \Cl(L)^G[2^\infty] / (\ker\psi)[2^\infty] \inject \Cl_2(K). \]

Now \emph{assume in addition that $L/K$ is quadratic}. Then $\Cl(L)^G[[L:K]] = \Cl(L)^G[2]$ is an elementary abelian $2$-group, so one obtains the following result.

\begin{corollary}[Relative genus theory bounds on $2$-rank, cf. Jehne {\cite[p. 230, Section 5]{Jehne}}, Lemmermeyer {\cite[Proposition 1.3.19]{LemmermeyerCFT}},]
\label{COR:relative-genus-theory-bounds-2-rank}

If $L/K$ is quadratic with Galois group $G$, then
\begin{align}
\label{EQ:ambig-resulting-2-rank-bounds}
d_2 \Cl(L)
\ge d_2 \Cl(L)^G
\ge d_2 \ker\psi
&\ge v_2(\card{\Cl(L)^G}) - v_2(\card{\Cl(K)}) \\
&\underbrace{= \map{ram}(L/K) - 1 - d_2 \mcal{O}_K^\times/ \mcal{O}_K^\times\cap N_{L/K}(L^\times)}_{\text{from Theorem \ref{THM:ambig-class-number-formula} and $(K^\times)^{[L:K] = 2} \le N_{L/K}(L^\times)$}},
\end{align}
where $\map{ram}(L/K)$ denotes the number of finite or infinite primes of $K$ ramified in $L$. Furthermore, $(\ker\phi^{\card{\Cl(K)}/\card{\Cl_2(K)}})[2] \le \Cl(L)^G$.\begin{arxiv} Note that $\Cl(L/K)[2] \le (\ker\phi^{\card{\Cl(K)}/\card{\Cl_2(K)}})[2]$, where $\Cl(L/K)\colonequals \ker\phi$ denotes the \emph{relative class group}.\end{arxiv} In particular, if $h_K$ is odd, i.e. $v_2(\card{\Cl(K)}) = 0$, then $\Cl(L)[2] = \Cl(L)^G[2] = \Cl(L)^G[2^\infty]$ and equality holds everywhere in the inequality \eqref{EQ:ambig-resulting-2-rank-bounds}; when $K = \QQ$ we recover the classical formula for $d_2\Cl(\QQ(\sqrt{\Delta}))$.
\end{corollary}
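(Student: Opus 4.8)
The plan is to read the three inequalities in \eqref{EQ:ambig-resulting-2-rank-bounds} as two cheap monotonicity steps flanking one genuine step, and then to obtain the displayed equality by extracting the $2$-adic valuation from the ambiguous class number formula. First I would dispatch the outer inequalities $d_2\Cl(L)\ge d_2\Cl(L)^G\ge d_2\ker\psi$: for a finite abelian group $A$ one has $d_2 A=\dim_{\FF_2}A[2]$, and $A\mapsto A[2]$ is monotone under the subgroup inclusions $\ker\psi\le\Cl(L)^G\le\Cl(L)$.

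For the middle inequality I would use that $\ker\psi$ is elementary abelian---given, since $\ker\psi\le\Cl(L)^G[[L:K]]=\Cl(L)^G[2]$---so that $d_2\ker\psi=\log_2\card{\ker\psi}$. The injection $\Cl(L)^G[2^\infty]/\ker\psi\inject\Cl_2(K)$ recorded just before the statement gives $\card{\Cl(L)^G[2^\infty]}/\card{\ker\psi}\le\card{\Cl_2(K)}$; writing the two orders as $2^{v_2(\card{\Cl(L)^G})}$ and $2^{v_2(\card{\Cl(K)})}$ and taking $\log_2$ yields exactly $d_2\ker\psi\ge v_2(\card{\Cl(L)^G})-v_2(\card{\Cl(K)})$.

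To convert this into closed form I would take $v_2$ of Theorem \ref{THM:ambig-class-number-formula}. Since $L/K$ is quadratic each $e_v\in\{1,2\}$, so $v_2(\prod_v e_v)=\map{ram}(L/K)$, while $v_2([L:K])=1$. The one point that needs care---and the step I expect to be the main bookkeeping obstacle---is the unit index: one observes $(K^\times)^2\le N_{L/K}(L^\times)$ (because $N_{L/K}(a)=a\sigma(a)=a^2$ for $a\in K^\times$), so $\mcal{O}_K^\times/(\mcal{O}_K^\times\cap N_{L/K}(L^\times))$ is killed by $2$, hence elementary abelian, and $v_2$ of its order equals $d_2\,\mcal{O}_K^\times/\mcal{O}_K^\times\cap N_{L/K}(L^\times)$. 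Combining these gives the asserted identity $v_2(\card{\Cl(L)^G})-v_2(\card{\Cl(K)})=\map{ram}(L/K)-1-d_2\,\mcal{O}_K^\times/\mcal{O}_K^\times\cap N_{L/K}(L^\times)$.

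For the remaining assertions the crucial elementary fact is that a norm-trivial $2$-torsion class is automatically ambiguous: applying the extension (conorm) map $j\colon\Cl(K)\to\Cl(L)$ to $x\in\ker\phi$ gives $x^{1+\sigma}=j(N_{L/K}(x))=1$, so $x^\sigma=x^{-1}$, whence $x^2=1$ forces $x^\sigma=x$, i.e.\ $x\in\Cl(L)^G$. This gives the ``Furthermore'' claim: writing $m\colonequals\card{\Cl(K)}/\card{\Cl_2(K)}$ (odd) and interpreting $\phi^m$ as $x\mapsto\phi(x)^m$, any $x\in(\ker\phi^m)[2]$ satisfies $\phi(x)^2=\phi(x)^m=1$, hence $\phi(x)=1$, so $x\in(\ker\phi)[2]\le\Cl(L)^G$ by the fact (and $\Cl(L/K)[2]=(\ker\phi)[2]\le(\ker\phi^m)[2]$ since $\ker\phi\le\ker\phi^m$). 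Finally, if $h_K$ is odd then $\Cl_2(K)=1$, so the same fact shows each $x\in\Cl(L)[2]$ is ambiguous (its norm lies in $\Cl(K)[2]=1$), giving $\Cl(L)[2]=\Cl(L)^G[2]$, while the injection degenerates to $\Cl(L)^G[2^\infty]=\ker\psi$, forcing $\ker\psi=\Cl(L)^G[2]=\Cl(L)^G[2^\infty]$; chasing $\dim_{\FF_2}$ through these identities collapses \eqref{EQ:ambig-resulting-2-rank-bounds} to equalities, and setting $K=\QQ$ (so $\mcal{O}_\QQ^\times=\{\pm1\}$, $h_\QQ=1$) recovers the classical formula for $d_2\Cl(\QQ(\sqrt{\Delta}))$.
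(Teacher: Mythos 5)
Your proof is correct, and its skeleton coincides with the paper's: the two outer inequalities by monotonicity of $A\mapsto A[2]$, the middle inequality via $d_2\ker\psi=\log_2\card{\ker\psi}$ (using that $\ker\psi\le\Cl(L)^G[2]$ is elementary) fed into the injection $\Cl(L)^G[2^\infty]/(\ker\psi)[2^\infty]\inject\Cl_2(K)$ recorded before the statement, the $v_2$-extraction from Theorem \ref{THM:ambig-class-number-formula} using $e_v\in\{1,2\}$ and $(K^\times)^2\le N_{L/K}(L^\times)$ to make the unit quotient elementary, and the collapse of \eqref{EQ:ambig-resulting-2-rank-bounds} when $h_K$ is odd. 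The one place you genuinely diverge is the ``Furthermore'' claim. The paper proves it by an ideal-level computation: from $I^2=a\mcal{O}_L$ it expresses $I/\sigma(I)$ in terms of $N_{L/K}(I)$ modulo $P_L$, raises to the odd exponent $m\colonequals\card{\Cl(K)}/\card{\Cl_2(K)}$, and concludes $[I]=\sigma[I]$ whenever $[I]\in\Cl(L)[2]$ with $\phi([I])^m=1$. You instead work purely at the class level through the conorm $j\colon\Cl(K)\to\Cl(L)$, using $x^{1+\sigma}=j(\phi(x))$ --- the class-level form of $N_{L/K}(I)\mcal{O}_L=I\sigma(I)$, which is the one standard identity your argument imports --- to get $(\ker\phi)[2]\le\Cl(L)^G$, and then you absorb the odd exponent by observing $(\ker\phi^m)[2]=(\ker\phi)[2]$, since a $2$-torsion class whose norm is killed by an odd exponent has trivial norm. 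This is a tidy simplification: it makes visible that the paper's $m$-power formulation is, on $2$-torsion, literally equivalent to the cleaner statement $\Cl(L/K)[2]\le\Cl(L)^G$ (which the paper only records as an aside), and it streamlines the $h_K$-odd case, where you obtain $\Cl(L)[2]\le\ker\phi$ directly from $\Cl(K)[2]=1$ rather than by specializing the $m$-power statement with $m=h_K$ as the paper does. The paper's ideal manipulation buys only self-containedness; since the conorm identity is standard for quadratic (indeed Galois) extensions, the two routes are equally rigorous and yours is arguably the more transparent.
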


\begin{remark}[Additional information]
\label{RMK:improvement-4-rank-relative-genus-theory}

Can one get better results (cf. Lemmermeyer \cite[Questions 8 and 9]{LemmermeyerCFT})? For instance, in the equality case $\rank_2\Cl(L) = v_2(\#\Cl(L)^G) - v_2(\#\Cl(K))$, we must have $\ker\psi = \Cl(L)^G[2]$ and $\Cl(L)^G[2^\infty]/\ker\psi \simeq \Cl_2(K)$, so $d_4\Cl(L)^G = d_2\Cl(K)$ in particular, which could be helpful. We speculate further in Section \ref{SEC:improvements-of-GS?}.
\end{remark}

\begin{markednewpar}\leavevmode\end{markednewpar}\begin{arxiv}
\begin{sidermk}
See Lemmermeyer's survey \cite[Proposition 1.3.19]{LemmermeyerCFT}: the kernel $\ker\phi$ is called the relative class group $\Cl(L/K)$. (There seems to be more than one definition in the literature, but this is one of them.) For $K = \QQ$ we have the usual class group, of course. See more facts, especially on the index of the image of $\phi$ in $\Cl(K)$, at \cite[Theorem 1.2.5. Takagi's main theorem for Hilbert class fields]{LemmermeyerCFT} (specifically item 6).
\end{sidermk}

\begin{proof}[Proof of general injection]

An invariant ideal class $I\pmod{P_L}\in \Cl(L)^G$ lies in the kernel $\ker\psi$ if and only if $N_{L/K}(I)\in P_K$. But for $I\pmod{P_L}\in \Cl(L)^G$ we have $N_{L/K}(I)\mcal{O}_L = I^{[L:K]}\mcal{O}_L = I^{[L:K]}$. So \emph{if} $N_{L/K}(I)\in P_K$, then $N_{L/K}(I)\mcal{O}_L\in P_L$, so $I^{[L:K]}\in P_L$. So $\ker\psi$ lies inside the $[L:K]$-torsion subgroup $\Cl(L)^G[[L:K]]$ of $\Cl(L)^G$. (It also lies inside $\ker\phi$, the relative class group $\Cl(L/K)$.)

Consider the exact sequence $1\to \ker\psi \subseteq \Cl(L)^G \to \Cl(K)$. 
Restrict $\psi$ further to the $2$-primary part $\Cl(L)^G[2^\infty]$. Then the restriction $\psi_2$ maps $\Cl(L)^G[2^\infty]$ into the $2^\infty$-torsion $\Cl_2(K)$ of $\Cl(K)$. Clearly $\ker\psi_2$ is just the $2^\infty$-torsion of $\ker\psi$ (because $\ker\psi \subseteq \Cl(L)^G$). Thus $\psi$ induces the desired injection $\Cl(L)^G[2^\infty] / (\ker\psi)[2^\infty] \inject \Cl_2(K)$.
\end{proof}

\begin{proof}[Proof of specialization to {$[L:K] = 2$}, including Inequality \eqref{EQ:ambig-resulting-2-rank-bounds}]

In this case $\Cl(L)^G[[L:K]] = \Cl(L)^G[2]$ is an elementary abelian $2$-group, so its subgroup $\ker\psi$ is also elementary, and we obtain $d_2 \Cl(L) \ge d_2 \Cl(L)^G \ge d_2\ker\psi \ge v_2(\#\Cl(L)^G) - v_2(\#\Cl(K))$, so Theorem \ref{THM:ambig-class-number-formula} and $(K^\times)^{[L:K] = 2} \le N_{L/K}(L^\times)$ (as well as $[L:K] = 2$ and the fact that $e_v\in\{1,2\}$ for all $v$) prove Inequality \eqref{EQ:ambig-resulting-2-rank-bounds}.

For the second part, consider $[I]\in \Cl(L)[2]$, so that $I^2 = a\cdot \mcal{O}_L$ for some $a$. Then $I/\sigma(I) = I^2/I\sigma(I) = a^2 N_{L/K}(I)^{-1} \mcal{O}_L$---somewhat abusing notation (as $N_{L/K}(I)\in \mcal{I}_K$). Raising to exponent of $\card{\Cl(K)}/\card{\Cl_2(K)} \equiv1\pmod{2}$ gives $I/\sigma(I) \in N_{L/K}(I^{\card{\Cl(K)}/\card{\Cl_2(K)}})^{-1} P_L$ (again somewhat abusing notation), since $I^2\in P_L$ (so also $\sigma(I)^2\in P_L$).

In particular, if $[I]\in \Cl(L)[2]$ and $I\in \ker\phi^{\card{\Cl(K)}/\card{\Cl_2(K)}}$, then $I/\sigma(I)\in P_L$, i.e. $[I] = \sigma[I]$, so $[I]\in \Cl(L)^G$. In other words, $(\ker\phi^{\card{\Cl(K)}/\card{\Cl_2(K)}})[2] \le \Cl(L)^G$.
\end{proof}

\begin{proof}[Proof of specialization to {$[L:K] = 2$ and $h_K$ odd}, including equality of Inequality \eqref{EQ:ambig-resulting-2-rank-bounds}]

If, in addition, $h_K$ is odd, i.e. $\card{\Cl_2(K)} = 1$, then $\ker\phi^{\card{\Cl(K)}/\card{\Cl_2(K)}} = \Cl(L)$, so specializing gives $\Cl(L)[2] \le \Cl(L)^G$, so that $\Cl(L)[2] = \Cl(L)^G[2]$ in this case. But trivially $v_2(\#\Cl(K)) = 0$ and $v_2(\#\Cl(L)^G) \ge \rank_2\Cl(L)^G$, so equality indeed holds everywhere in Inequality \eqref{EQ:ambig-resulting-2-rank-bounds}.
\end{proof}
\end{arxiv}

\subsection{\Redei matrices and \texorpdfstring{$4$}{4}-ranks}
\label{SUBSEC:Redei-matrix-intro}

\begin{definition}[Cf. R\'{e}dei \cite{Redei}; R\'{e}dei--Reichardt \cite{RedeiReichardt}]

For a quadratic number field $K$ with prime discriminant factorization $\Delta_K = p_1^*\cdots p_t^*$, let $R_K$ denote the (additive) \emph{\Redei matrix} $[a_{ij}]\in \FF_2^{t\times t}$ (up to re-labeling of the $p_i^*$) with $(-1)^{a_{ij}} \colonequals \tqchar{p_i^*}{p_j}$ when $i\ne j$, and $(-1)^{a_{ii}} \colonequals \tqchar{\Delta_K/p_i^*}{p_i}$ (so that the row vectors sum to $0\in\FF_2^t$). Here $\tqchar{*}{*}$ denotes the Kronecker (not Legendre) symbol, so that $a_{ij} = 0$ if and only if $p_j$ splits in the quadratic field $\QQ(\sqrt{p_i^*})$ (even if $p_j = 2$).
\end{definition}

\begin{remark}
In this paper we will often draw \Redei matrices without the diagonal entries, which can be recovered by the fact that column sums are $0$.
\end{remark}

\begin{markednewpar}\leavevmode\end{markednewpar}\begin{arxiv}
\begin{sidermk}[Relevant version of quadratic reciprocity]
For our purposes, if $p^*,q^*$ are prime discriminants with $p,q$ distinct, then $\tqchar{p^*}{q} = -\tqchar{q^*}{p}$ if $p^*,q^* < 0$ are both negative and not equal to $-4$; and $\tqchar{p^*}{q} = \tqchar{q^*}{p}$ if at least one of $p^*,q^*$ is positive, and neither equals $-4$. If, say, $q^* = -4$, then $p$ is odd and it is easiest just to directly compute $\tqchar{p^*}{2} = [p^*\equiv 1\pmod{8}] = (-1)^{(p^* - 1)/4} = (-1)^{(p^2 - 1)/8}$ and $\tqchar{-4}{p} = [p\equiv1\pmod{4}] = (-1)^{(p-1)/2}$.
\end{sidermk}
\end{arxiv}

\begin{theorem}[$4$-rank of narrow class group; cf. R\'{e}dei \cite{Redei}; R\'{e}dei--Reichardt criterion \cite{RedeiReichardt}]

Let $K/\QQ$ be a quadratic number field with $t$ rational primes dividing $\Delta_K$. Then $d_4\Cl^+(K) = t-1 - \rank_{\FF_2} R_K$.
\end{theorem}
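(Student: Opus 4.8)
The plan is to realize the $4$-rank as the dimension of the space of \emph{ambiguous ideal classes that happen to be squares}, and then to identify the obstruction to being a square with the \Redei matrix via genus characters. The first input is purely group-theoretic: for any finite abelian $2$-group $A$ one has $d_4 A = \dim_{\FF_2}(A[2]\cap A^2)$, as one sees immediately by comparing invariant-factor decompositions (both sides count the cyclic factors of order $\ge 4$). Applying this to $A = \Cl^+(K)$, it suffices to compute the dimension of the group of \emph{square} ambiguous classes. By Gauss's principal genus theorem (recorded above as $d_2\Cl^+(K) = t-1$), the classes $[\mf{p}_1],\dots,[\mf{p}_t]$ of the ramified primes $\mf{p}_i$ (with $\mf{p}_i^2 = (p_i)$ narrowly principal, so $[\mf{p}_i]\in\Cl^+(K)[2]$) generate $\Cl^+(K)[2]$, a space of dimension $t-1$; thus the generating map $\FF_2^t\twoheadrightarrow\Cl^+(K)[2]$ has kernel a single relation line $\langle r\rangle$.

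Next I would bring in duality. Since $\Cl^+(K)^2 = \bigcap_{\chi}\ker\chi$, where $\chi$ ranges over the characters of order dividing $2$, i.e. the genus characters $W = (\Cl^+(K)/\Cl^+(K)^2)^\vee$, a class $c$ is a square if and only if $\chi(c)=1$ for all $\chi\in W$. Hence the square ambiguous classes form exactly the \emph{left radical} of the pairing $B\colon \Cl^+(K)[2]\times W\to\FF_2$, $(c,\chi)\mapsto\chi(c)$, and $d_4\Cl^+(K) = \dim(\text{left radical of }B)$. The genus characters are generated by $\chi_1,\dots,\chi_t$, where $\chi_i$ is attached to the factorization $\Delta_K = p_i^*\cdot(\Delta_K/p_i^*)$, subject to the single relation $\prod_i\chi_i = 1$, so that $\dim W = t-1$ as well.

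The key computation is that, written in the generators $[\mf{p}_j]$ and $\chi_i$, the pairing $B$ is represented precisely by the \Redei matrix, namely $\chi_i([\mf{p}_j]) = (-1)^{a_{ij}}$. For $j\ne i$ the prime $p_j$ is coprime to $p_i^*$, and the genus-character formula gives $\chi_i([\mf{p}_j]) = \tqchar{p_i^*}{p_j} = (-1)^{a_{ij}}$; for $j=i$ the prime $p_i$ divides $p_i^*$, forcing evaluation through the complementary discriminant, which yields $\chi_i([\mf{p}_i]) = \tqchar{\Delta_K/p_i^*}{p_i} = (-1)^{a_{ii}}$, exactly the \Redei diagonal entry. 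Granting this, the relation vector $\mathbf{1}$ among the $\chi_i$ lies in the left kernel of $R_K$ (equivalently, the column sums vanish, as built into the definition), and the relation vector $r$ among the $[\mf{p}_j]$ lies in the right kernel of $R_K$ automatically, since $B$ is a genuine function of classes ($\chi_i$ of the trivial product is $1$). Therefore $B$ is the form that $R_K$ induces on the quotients by these two radical lines, so $\rank B = \rank R_K$, and the standard rank--nullity statement for bilinear forms over $\FF_2$ gives $\dim(\text{left radical of }B) = (t-1)-\rank B = t-1-\rank R_K$, as claimed.

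The main obstacle is the explicit genus-character computation, above all the value on the \emph{ramified} primes that produces the diagonal entries. This demands the precise definition of $\chi_i$ through its two complementary discriminants $p_i^*$ and $\Delta_K/p_i^*$, a verification---via the Hilbert-symbol product formula together with quadratic reciprocity, and with genuine care at the prime $2$ and at the signs encoded in the prime discriminants $p^*$---that $\chi_i$ is well defined and of order dividing $2$ on $\Cl^+(K)$, and the check that at the ramified prime one is \emph{forced} to evaluate through $\Delta_K/p_i^*$ rather than $p_i^*$. By contrast, the narrow-versus-wide subtlety (whether $\prod_j\mf{p}_j = (\sqrt{\Delta_K})$ is narrowly principal, so whether $r = \mathbf{1}$) never has to be resolved: well-definedness of $B$ already places $r$ in $\ker R_K$, so the dimension bookkeeping goes through regardless of what $r$ is.
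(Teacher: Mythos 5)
The paper does not prove this statement at all: it is quoted as classical background, with the proof deferred to the cited sources \cite{Redei,RedeiReichardt}. So the only meaningful comparison is with the classical argument, and your proposal is essentially a clean modern packaging of exactly that argument: ambiguous ($=$ $2$-torsion) classes paired against genus characters, with $R_K$ as the Gram matrix. Your reductions are all correct: $d_4A=\dim_{\FF_2}(A[2]\cap A^2)$ is right; for quadratic $K$ the narrow $2$-torsion coincides with the ambiguous classes (since $\sigma$ acts by inversion on $\Cl^+(K)$, as $I\sigma(I)=N(I)$ is generated by a positive rational, hence narrowly principal) and is generated by the ramified primes with a one-dimensional relation space; the entries $\chi_i([\mf{p}_j])=(-1)^{a_{ij}}$, including the diagonal evaluation through the complementary discriminant $\Delta_K/p_i^*$, match the paper's definition of $R_K$; and your rank bookkeeping (quotienting by the two relation lines, which you correctly place in the left and right kernels, preserves rank) is valid, as is your reading of the paper's convention that the column sums of $R_K$ vanish. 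Your observation that one never needs to decide whether $(\sqrt{\Delta_K})$ is narrowly principal, i.e.\ whether $r=\mathbf{1}$, is a genuinely nice simplification.

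One caution about where the real content hides, beyond the genus-character verifications you already flag. Your step ``$\dim W = t-1$ with the \emph{single} relation $\prod_i\chi_i=1$'' is not mere bookkeeping: given $d_2\Cl^+(K)=t-1$, the assertion that the $t$ genus characters span the full quadratic dual $(\Cl^+(K)/\Cl^+(K)^2)^\vee$ --- equivalently, that no proper nonempty subproduct $\prod_{i\in S}\chi_i$ is trivial on $\Cl^+(K)$ --- is precisely the hard direction of the principal genus theorem in this setting. It does have a short proof (for proper nonempty $S$, the coprime discriminants $D_S=\prod_{i\in S}p_i^*$ and $\Delta_K/D_S$ give independent nontrivial Kronecker characters, so by Dirichlet there is a prime $p$ with $\tqchar{D_S}{p}=\tqchar{\Delta_K/D_S}{p}=-1$; such a $p$ splits in $K$ and any $\mf{p}\mid p$ witnesses $\prod_{i\in S}\chi_i([\mf{p}])=-1$), but as written your proposal treats it as a definition-level fact, and likewise treats the generation of $\Cl^+(K)[2]$ by ramified primes as immediate from the $2$-rank formula, when it is an additional (standard, but separate) narrow genus theory statement. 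With those two classical inputs supplied, your proof is complete and correct.
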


\begin{arxiv}For the connection between $4$-rank and $C_4$-splitting/factorizations of $\Delta_K$, and related matters, see e.g. Lemmermeyer \cite{LemmermeyerCFT,LemmermeyerConstruction,Lemmermeyer4class}, Hurrelbrink \cite{Hurrelbrink}, and Hajir \cite{Hajir,HajirCorrection}. In a different direction, Waterhouse \cite{Waterhouse} gave an $8$-rank criterion (see also Hasse \cite{Hasse} and Lu \cite{Lu}), and Kolster \cite{Kolster} gave a criterion for all powers of $2$. Yue \cite{Yue} generalized the R\'{e}dei--Reichardt $4$-rank criterion to relative quadratic extensions over a base field with odd class number.\end{arxiv}

\section{Key lemmas}
\label{SEC:key-lemmas}

We start by reviewing some background, but refer the reader to Section \ref{SUBSEC:concrete-new-lemmas} for concrete new results, which we will apply to some open sub-cases of Martinet's question in Section \ref{SEC:concrete-new-2-towers-results}.

\subsection{Background}

Let $K/\QQ$ be an imaginary quadratic field extension with $\Delta_K = p_1^*\cdots p_t^* < 0$ (recall $t=5$ in Martinet's question) and $L/\QQ$ a finite subfield of $K_{(2)}^\infty/\QQ$ with $\sqrt{\Delta_K}\notin L/\QQ$, so that $KL=L(\sqrt{\Delta_K})$ is quadratic over $L$. Then
\begin{itemize}
\item $(KL)_{(2)}^\infty = K_{(2)}^\infty$ (from the beginning of Section \ref{SUBSEC:summary-of-previous-methods-progress-Martinet's-question}), so $[K_{(2)}^\infty:K] = \infty$ if and only if $[(KL)_{(2)}^\infty:KL] = \infty$.

\item By Dirichlet's unit theorem, $d_2\mcal{O}_{KL}^\times = (\frac12[KL:\QQ]-1) + 1 = [L:\QQ]$, as $-1\in \mcal{O}_{KL}^\times$ and $KL$ is always totally imaginary.

\item Similarly, $d_2\mcal{O}_L^\times$ equals $\frac12[L:\QQ]$ if $L$ is totally imaginary, and equals $[L:\QQ]$ if $L$ is totally real.

\item By Corollary \ref{COR:relative-genus-theory-bounds-2-rank} applied to the relative quadratic extension $KL/L$, we have
\[ d_2\Cl(KL) \ge \map{ram}(L(\sqrt{\Delta_K})/L) - 1 - d_2(\mcal{O}_L^\times/[\mcal{O}_L^\times\cap N_{KL/L}((KL)^\times)]) \ge \map{ram}(L(\sqrt{\Delta_K})/L) - 1 - d_2 \mcal{O}_L^\times. \]

\end{itemize}

Applying Theorem \ref{THM:Golod--Shafarevich} to $KL$ now yields the following main idea of most relevant papers.

\begin{proposition}[Cf. \cite{Martinet}, \cite{Schmithals}, and \cite{Schoof}]
\label{PROP:main-idea-extension-ramification-quadratic}

With notation as above, the field $KL$, and thus $K$ by extension, has an infinite $2$-tower if any of the following criteria hold:
\begin{itemize}
\item $d_2 \Cl(KL) \ge 2+2\sqrt{1 + [L:\QQ]}$;

\item $\map{ram}(L(\sqrt{\Delta_K})/L) - 1 - d_2(\mcal{O}_L^\times/[\mcal{O}_L^\times\cap N_{KL/L}((KL)^\times)]) \ge 2+2\sqrt{1 + [L:\QQ]}$;

\item $L/\QQ$ is totally imaginary and $\map{ram}(L(\sqrt{\Delta_K})/L) - 1 - \frac12[L:\QQ] \ge 2+2\sqrt{1 + [L:\QQ]}$;

\item $L/\QQ$ is totally real and $\map{ram}(L(\sqrt{\Delta_K})/L) - 1 - [L:\QQ] \ge 2+2\sqrt{1 + [L:\QQ]}$.
\end{itemize}
\end{proposition}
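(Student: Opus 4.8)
The plan is to obtain all four criteria as immediate consequences of applying the Golod--Shafarevich inequality (Theorem~\ref{THM:Golod--Shafarevich}) to the field $KL$ at the prime $p=2$, and then transferring the conclusion from $KL$ back to $K$. The four bulleted facts preceding the statement already do the substantive work; what remains is to chain them together in the correct logical direction, keeping careful track of the contrapositive.

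First I would treat the first criterion. Suppose, for contradiction, that $KL$ has finite $2$-tower. Then Theorem~\ref{THM:Golod--Shafarevich} applied to $KL$ at $p=2$ gives $d_2\Cl(KL) < 2 + 2\sqrt{1 + d_2\mcal{O}_{KL}^\times}$. Substituting the second bulleted fact $d_2\mcal{O}_{KL}^\times = [L:\QQ]$ (valid since $KL$ is totally imaginary) yields $d_2\Cl(KL) < 2 + 2\sqrt{1 + [L:\QQ]}$, contradicting the first criterion. Hence $KL$ has infinite $2$-tower, and the first bulleted fact $(KL)_{(2)}^\infty = K_{(2)}^\infty$ (itself resting on Proposition~\ref{PROP:tower-inductive-containment-Roquette}) transfers infinitude to $K$; this is the ``and thus $K$ by extension'' clause.

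It then suffices to show that each of the remaining three criteria implies the first. For the second criterion, the sharp form of the relative genus theory bound (the fourth bulleted fact, namely Corollary~\ref{COR:relative-genus-theory-bounds-2-rank} applied to $KL/L$) gives
\[ d_2\Cl(KL) \ge \map{ram}(L(\sqrt{\Delta_K})/L) - 1 - d_2(\mcal{O}_L^\times/[\mcal{O}_L^\times\cap N_{KL/L}((KL)^\times)]), \]
so its hypothesis forces $d_2\Cl(KL) \ge 2 + 2\sqrt{1 + [L:\QQ]}$, i.e.\ the first criterion. For the third and fourth criteria I would instead invoke the coarser bound $d_2\Cl(KL) \ge \map{ram}(L(\sqrt{\Delta_K})/L) - 1 - d_2\mcal{O}_L^\times$ and substitute the third bulleted fact: $d_2\mcal{O}_L^\times = \tfrac12[L:\QQ]$ when $L$ is totally imaginary (yielding the third criterion) and $d_2\mcal{O}_L^\times = [L:\QQ]$ when $L$ is totally real (yielding the fourth). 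In each case the displayed hypothesis again produces the first criterion.

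Since all the heavy machinery---Golod--Shafarevich, the ambiguous class number formula feeding Corollary~\ref{COR:relative-genus-theory-bounds-2-rank}, Dirichlet's unit theorem, and the tower-containment Proposition~\ref{PROP:tower-inductive-containment-Roquette}---is already in place, I do not expect a genuine obstacle here; the argument is an assembly rather than a new computation. The only point requiring care is bookkeeping: each criterion must be recognized as a lower bound on $d_2\Cl(KL)$ that meets the Golod--Shafarevich threshold $2 + 2\sqrt{1+[L:\QQ]}$, and one must apply the unit rank $d_2\mcal{O}_{KL}^\times = [L:\QQ]$ on the $KL$ side (rather than the naive $\tfrac12[KL:\QQ]$) while the ranks $d_2\mcal{O}_L^\times$ enter only through the genus bound on the $L$ side.
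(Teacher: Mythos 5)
Your proposal is correct and matches the paper's own (implicit) argument exactly: the paper derives Proposition \ref{PROP:main-idea-extension-ramification-quadratic} by applying Theorem \ref{THM:Golod--Shafarevich} to $KL$ and chaining the four bulleted facts (tower transfer via Proposition \ref{PROP:tower-inductive-containment-Roquette}, Dirichlet unit ranks, and Corollary \ref{COR:relative-genus-theory-bounds-2-rank} for $KL/L$), just as you do. Your reduction of criteria two through four to the first criterion, and your care with the contrapositive of Golod--Shafarevich and with using $d_2\mcal{O}_{KL}^\times = [L:\QQ]$ on the $KL$ side, are precisely the intended bookkeeping.
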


\begin{remark}
Suppose $L/\QQ$ is unramified at $m\ge0$ primes dividing $\Delta_K$, say $p_1,\ldots,p_m$. Then $\map{ram}(L(\sqrt{\Delta_K})/L)$ equals $\#\{\wp\in\Spec\mcal{O}_L: \wp\mid p_1\cdots p_m\}$ if $L$ is totally imaginary, and equals $[L:\QQ] + \#\{\wp\in\Spec\mcal{O}_L: \wp\mid p_1\cdots p_m\}$ if $L$ is totally real (the $[L:\QQ]$ coming from ramification at the infinite places of $L$).
\end{remark}

\begin{markednewpar}\leavevmode\end{markednewpar}\begin{arxiv}
\begin{sidermk}
\label{SIDE:RMK:unit-norm-techniques-Schoof-Lemmermeyer}

The term $d_2(\mcal{O}_L^\times/[\mcal{O}_L^\times\cap N_{KL/L}((KL)^\times)])$---measuring the solvability of $x^2 - \Delta_K y^2 = \eps$ in $L$ for the various units $\eps\in \mcal{O}_L^\times$---seems unwieldy in general (almost all papers use the trivial upper bound $d_2 \mcal{O}_L^\times$), though under certain hypotheses Schoof \cite[Remark 3.5 and Theorem 3.7]{Schoof} and Lemmermeyer \cite[Proof of Theorem 3]{Lemmermeyer4class} have been able to give stronger yet clean estimates this way, on different but related problems.
\end{sidermk}
\end{arxiv}

Schmithals \cite{Schmithals} (1980) introduced the idea (Proposition \ref{PROP:Schmithals-2-class-field-splitting-idea}) of looking at $L = F_{(2)}^1$ for a (quadratic) field $F$. The motivation comes from the decomposition law of class field theory; we will use the following particular $2$-extension version.

\begin{proposition}[Decomposition law and application, consult e.g. {\cite[Theorem 1.2.5]{LemmermeyerCFT}}]
\label{PROP:decomposition-law}

Set $L \colonequals F_{(2)}^1$ for a number field $F$. \begin{arxiv} A (nonzero) prime $\mf{p}$ of $F$ splits into $\card{\Cl(F)}/\ord_{\Cl(F)}[\mf{p}]$ primes in $F^1$, and hence (by Galois theory of $F\le L\le F^1$) into $\card{\Cl_2(F)}/\ord_{\Cl(F)/\Cl(F)[\ZZ\setminus2\ZZ]}[\mf{p}]$ primes in $L$; note that $\ord_{\Cl(F)/\Cl(F)[\ZZ\setminus2\ZZ]}[\mf{p}]$ equals the largest power of $2$ dividing $\ord_{\Cl(F)}[\mf{p}]$. \end{arxiv} For $m\ge0$ distinct rational primes $p_1,\ldots,p_m$, we have
\[ \#\{\wp\in\Spec\mcal{O}_L: \wp\mid p_1\cdots p_m\} = \sum \card{\Cl_2(F)}/(\text{largest power of $2$ dividing $\ord_{\Cl(F)}[\mf{p}]$}), \]
where the sum runs over the primes $\mf{p}$ of $F$ dividing $p_1\cdots p_m$.
\end{proposition}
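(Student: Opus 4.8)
The plan is to read off both splitting counts directly from the Artin reciprocity isomorphism $\Gal(F^1/F)\simeq\Cl(F)$ and then sum over the finitely many primes of $F$ lying above $p_1,\ldots,p_m$. The one genuinely arithmetic input is that $F^1/F$ is unramified at every place, so each prime $\mf{p}$ of $F$ is unramified in $F^1$ (hence $e=1$ throughout), and its Frobenius is a well-defined conjugacy class in $\Gal(F^1/F)$. Since this group is abelian, the Frobenius is a single element, which under reciprocity is exactly the class $[\mf{p}]\in\Cl(F)$.

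First I would establish the splitting in $F^1$. The residue degree $f$ of $\mf{p}$ in $F^1$ is the order of its Frobenius, namely $\ord_{\Cl(F)}[\mf{p}]$; as $e=1$, the number of primes of $F^1$ above $\mf{p}$ is $[F^1:F]/f=\card{\Cl(F)}/\ord_{\Cl(F)}[\mf{p}]$. This is the first assertion.

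Next I would pass to $L=F_{(2)}^1$. Under $\Gal(F^1/F)\simeq\Cl(F)$ the subfield $L$ is the fixed field of the prime-to-$2$ part $\Cl(F)[\ZZ\setminus2\ZZ]$, so $\Gal(L/F)\simeq\Cl(F)/\Cl(F)[\ZZ\setminus2\ZZ]\simeq\Cl_2(F)$ (as also follows from the isomorphism $\Gal(K_{(p)}^1/K)\simeq\Cl_p(K)$ recalled in the introduction). Since $\mf{p}$ is unramified in $L$ as well, its residue degree in $L$ is the order of the image of Frobenius in this quotient, i.e. $\ord_{\Cl(F)/\Cl(F)[\ZZ\setminus2\ZZ]}[\mf{p}]$. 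To evaluate it I would invoke the elementary fact that the projection of a finite abelian group onto its $2$-Sylow subgroup sends an element to one whose order is precisely the $2$-part of the original order; applied to $[\mf{p}]$ this shows the quantity equals the largest power of $2$ dividing $\ord_{\Cl(F)}[\mf{p}]$, proving the ``note that'' clause. Dividing $\card{\Cl_2(F)}=[L:F]$ by this residue degree then gives the number of primes of $L$ above $\mf{p}$, which is the ``and hence'' assertion.

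Finally, for the displayed count I would use that the primes $\wp$ of $\mcal{O}_L$ dividing $p_1\cdots p_m$ are exactly those lying over a prime $\mf{p}$ of $F$ dividing $p_1\cdots p_m$, and that each such $\wp$ lies over a unique $\mf{p}$; summing the per-$\mf{p}$ counts from the previous step over this finite set of $\mf{p}$ yields the stated identity. I do not anticipate a serious obstacle: the only delicate points are the identification of the residue degree in the intermediate field $L$ with the order of the image of Frobenius in the quotient Galois group $\Gal(L/F)$, and keeping the two-step fibration $\wp\mid\mf{p}\mid p_i$ straight in the final sum. Both are routine, and the everywhere-unramifiedness of $F^1/F$ is exactly what guarantees $e=1$ at each stage.
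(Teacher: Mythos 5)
Your proof is correct and follows exactly the route the paper intends: the paper gives no independent argument, citing the decomposition law (via Artin reciprocity, Frobenius at $\mf{p}\mapsto[\mf{p}]$) and the parenthetical ``Galois theory of $F\le L\le F^1$'', which is precisely what you carry out, including the identification of $\Gal(L/F)$ with $\Cl_2(F)$ and the $2$-part computation of the order of the image of Frobenius. Your handling of the two-step fibration $\wp\mid\mf{p}\mid p_i$ and the reduction to $e=1$ everywhere in $F^1/F$ matches the paper's implicit reasoning, so there is nothing to add.
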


\begin{remark}[See \cite{Schmithals}]
\label{RMK:inert-principal-decomposition}

If a rational prime $p$ is inert in $F/\QQ$, then (perhaps surprisingly) we still have lots of splitting in $L/\QQ$: the prime ideal $p\mcal{O}_F$ is \emph{principal}, hence \emph{totally split} in $L/F$\processifversion{arxiv}{ (in fact, also in the extension $F^1/F$)}. In fact, from a ``random \Redei matrix'' perspective (when $F$ is quadratic), it is \emph{harder} to guarantee lots of splitting in $L/\QQ$ when $p$ splits in $F/\QQ$, as discussed in Section \ref{SEC:prime-splitting-HCF-barrier}.

\end{remark}

\begin{markednewpar}\leavevmode\end{markednewpar}\begin{arxiv}
\begin{sidermk}
\label{SIDE:RMK:ray-class-field-decomposition-law}

There is an analogous decomposition law for narrow Hilbert class fields (and ray class fields in general), presented for instance in Neukirch's textook \cite[p. 409, Ch. VI, Sec. 6, Theorem 7.3]{Neukirch} and \url{http://math.stackexchange.com/a/1264488/43100}.
\end{sidermk}
\end{arxiv}


\begin{proposition}[$2$-class field idea, cf. Schmithals \cite{Schmithals} (1980) and Schoof \cite{Schoof} (1986)]
\label{PROP:Schmithals-2-class-field-splitting-idea}

Let $K/\QQ$ be an imaginary quadratic field extension with $\Delta_K = p_1^*\cdots p_t^*$ and $F/\QQ$ a finite subfield of $K_{(2)}^\infty/\QQ$, with $F/\QQ$ unramified at $m\ge1$ primes dividing $\Delta_K$, say $p_1,\ldots,p_m$. Then $L \colonequals F_{(2)}^1$ is a finite subfield (by Proposition \ref{PROP:tower-inductive-containment-Roquette}) of $K_{(2)}^\infty/\QQ$ unramified at $p_1,\ldots,p_m$, with $\sqrt{\Delta_K}\notin L/\QQ$ (since $m\ge1$). By Proposition \ref{PROP:main-idea-extension-ramification-quadratic} and $[L:\QQ] = 2[L:F] = 2\card{\Cl_2(F)}$, the field $K$ has an infinite $2$-tower if any of the following criteria hold:
\begin{itemize}
\item $d_2 \Cl(KL) \ge 2+2\sqrt{1 + 2\card{\Cl_2(F)}}$;

\item $\map{ram}(L(\sqrt{\Delta_K})/L) - 1 - d_2(\mcal{O}_L^\times/[\mcal{O}_L^\times\cap N_{KL/L}((KL)^\times)]) \ge 2+2\sqrt{1 + 2\card{\Cl_2(F)}}$;

\item $L/\QQ$ is totally imaginary and $\#\{\wp\in\Spec\mcal{O}_L: \wp\mid p_1\cdots p_m\} - 1 - \card{\Cl_2(F)} \ge 2+2\sqrt{1 + 2\card{\Cl_2(F)}}$;

\item $L/\QQ$ is totally real and $\#\{\wp\in\Spec\mcal{O}_L: \wp\mid p_1\cdots p_m\} - 1 \ge 2+2\sqrt{1 + 2\card{\Cl_2(F)}}$.
\end{itemize}
\end{proposition}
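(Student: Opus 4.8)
The plan is to treat this as a specialization of Proposition~\ref{PROP:main-idea-extension-ramification-quadratic}, verifying the preliminary assertions about $L=F_{(2)}^1$ and then reducing each of the four displayed criteria to the corresponding criterion there. First I would confirm that $L$ is a finite subfield of $K_{(2)}^\infty/\QQ$: since $F/\QQ$ is a finite subfield of $K_{(2)}^\infty/\QQ$, Proposition~\ref{PROP:tower-inductive-containment-Roquette} gives $F_{(2)}^\infty\le K_{(2)}^\infty$, whence $L=F_{(2)}^1\le F_{(2)}^\infty\le K_{(2)}^\infty$. The stated identity $[L:\QQ]=2[L:F]=2\card{\Cl_2(F)}$ records the reciprocity isomorphism $\Gal(F_{(2)}^1/F)\simeq\Cl_2(F)$ together with $[F:\QQ]=2$, and in particular shows $L/\QQ$ is finite.

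Next I would check the ramification and separation claims. As $L/F$ is everywhere unramified (it is a Hilbert $2$-class field) and $F/\QQ$ is unramified at $p_1,\ldots,p_m$, multiplicativity of ramification indices in the tower $\QQ\le F\le L$ shows $L/\QQ$ is unramified at each $p_i$. Because $m\ge1$ and $p_1\mid\Delta_K$, the field $\QQ(\sqrt{\Delta_K})/\QQ$ is ramified at $p_1$ while $L/\QQ$ is not, so $\QQ(\sqrt{\Delta_K})\not\le L$, i.e.\ $\sqrt{\Delta_K}\notin L$; hence $KL=L(\sqrt{\Delta_K})$ is genuinely quadratic over $L$ and the hypotheses of Proposition~\ref{PROP:main-idea-extension-ramification-quadratic} are satisfied.

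The four criteria then follow by substitution. The first two are immediate on replacing $[L:\QQ]$ by $2\card{\Cl_2(F)}$ in the first two bullets of Proposition~\ref{PROP:main-idea-extension-ramification-quadratic}. For the last two I would invoke the remark following that proposition to rewrite $\map{ram}(L(\sqrt{\Delta_K})/L)$ as $\#\{\wp\in\Spec\mcal{O}_L:\wp\mid p_1\cdots p_m\}$ when $L$ is totally imaginary, and as $[L:\QQ]+\#\{\wp\in\Spec\mcal{O}_L:\wp\mid p_1\cdots p_m\}$ when $L$ is totally real (the extra $[L:\QQ]$ being ramification at the infinite places). Feeding in the unit rank computations from the Background of Section~\ref{SEC:key-lemmas}---namely $d_2\mcal{O}_L^\times=\frac12[L:\QQ]=\card{\Cl_2(F)}$ in the totally imaginary case and $d_2\mcal{O}_L^\times=[L:\QQ]=2\card{\Cl_2(F)}$ in the totally real case---the third and fourth bullets of Proposition~\ref{PROP:main-idea-extension-ramification-quadratic} collapse exactly onto the third and fourth criteria here, the two $[L:\QQ]$ terms cancelling in the totally real case.

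This argument is essentially bookkeeping, so I do not anticipate a genuine obstacle; the only points demanding care are the multiplicativity step that transfers unramifiedness from $F$ to $L$ at the $p_i$, and tracking which unit rank formula applies so that the substitutions in the last two bullets land on the advertised inequalities. Proposition~\ref{PROP:decomposition-law} is not needed for the statement itself, but would be the tool used in applications to evaluate $\#\{\wp:\wp\mid p_1\cdots p_m\}$ in terms of $\card{\Cl_2(F)}$ and the $2$-parts of the orders of the relevant prime classes in $\Cl(F)$.
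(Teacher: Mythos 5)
Your proposal is correct and takes essentially the same approach as the paper, whose entire justification is inlined in the statement itself: specialize Proposition~\ref{PROP:main-idea-extension-ramification-quadratic} to $L=F_{(2)}^1$, using Proposition~\ref{PROP:tower-inductive-containment-Roquette} for $L\le K_{(2)}^\infty$ and $[L:\QQ]=2[L:F]=2\card{\Cl_2(F)}$ for the substitutions. The details you supply---unramifiedness of $L/\QQ$ at $p_1,\ldots,p_m$ by multiplicativity through the unramified tower $L/F$, the deduction $\sqrt{\Delta_K}\notin L$ from ramification at $p_1$, and the rewriting of $\map{ram}(L(\sqrt{\Delta_K})/L)$ and the unit ranks via the remark and background bullets so the $[L:\QQ]$ terms cancel in the totally real case---are exactly the steps the paper leaves implicit, and you correctly observe that Proposition~\ref{PROP:decomposition-law} enters only in applications, not in the statement's proof.
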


\begin{markednewpar}\leavevmode\end{markednewpar}\begin{arxiv}
\begin{sidermk}
When $F/\QQ$ is Galois, the maximality of $F_{(2)}^1/F = L/F$ proves that $L/\QQ$ is also Galois.
\end{sidermk}

\begin{sidermk}[Relaxing to more general unramified solvable towers]
If instead of strictly looking at $2$-towers, one allows a few ``steps'' to be unramified, then it could turn out to be much easier to get uniform results on infinitude of class field towers. For instance, following Schoof \cite{Schoof}, one could take $L = F^1$ instead of $L = F_{(2)}^1$.
\end{sidermk}

\begin{sidermk}
\label{SIDE:RMK:attempt-Schmithals-for-narrow-class-field?}

In view of Side Remark \ref{SIDE:RMK:ray-class-field-decomposition-law}, it could in principle be useful to consider the variant with $L \colonequals F_{+,(2)}^1$ instead the \emph{narrow} Hilbert $2$-class field. However, we have not yet managed to find a particularly fruitful application to (the open cases of) Martinet's question, because if $F/\QQ$ is quadratic with $\Cl^+(F)\ne \Cl(F)$ (so $F$ must be real in particular), then $F \le F_{(2)}^1 \le F^1$ is a tower of totally real fields---but one can then check that $L$ becomes totally imaginary (in which case the criterion is stricter, due to lack of archimedean ramification).
\end{sidermk}
\end{arxiv}

\subsection{Concrete new lemmas}
\label{SUBSEC:concrete-new-lemmas}

We obtain the following Lemmas \ref{LEM:F=Q(p3*p4*p5*)-imaginary}, \ref{LEM:F=Q(p4*p5*)-real-positive-p4*,p5*}, and \ref{LEM:F=Q(p4*p5*)-imaginary} by applying Proposition \ref{PROP:decomposition-law} to some choice of quadratic field $F/\QQ$ in Proposition \ref{PROP:Schmithals-2-class-field-splitting-idea}. Throughout this section, we denote by $K/\QQ$ an imaginary quadratic field with $\Delta_K = \ell_1^*\cdots \ell_5^*$ (exactly $5$ prime discriminants).

\subsubsection{\texorpdfstring{$F$}{F}: imaginary quadratic field with three prime discriminants}

When $\ell_3^* \ell_4^* \ell_5^* < 0$, taking the imaginary quadratic field $F = \QQ(\sqrt{\ell_3^* \ell_4^* \ell_5^*}) \le K_{(2)}^\infty$ in Proposition \ref{PROP:Schmithals-2-class-field-splitting-idea} yields Lemma \ref{LEM:F=Q(p3*p4*p5*)-imaginary}.

\begin{lemma}
\label{LEM:F=Q(p3*p4*p5*)-imaginary}

If some three of the five prime discriminants, say $\ell_3^*,\ell_4^*,\ell_5^*$, have negative product, then the imaginary quadratic field $K$ has infinite $2$-tower if any of the following criteria hold:
\begin{enumerate}
\item $d_2 \Cl(KL) \ge 2+2\sqrt{1 + 2\card{\Cl_2(F)}}$;
\label{SUBLEM:F=Q(p3*p4*p5*)-imaginary:G-S}

\item $\card{\Cl_2(F)}\ge 16$ and $\ell_1,\ell_2$ are \emph{both} inert in $F/\QQ$\begin{arxiv}, i.e. $\tqchar{\ell_3^*}{p}\tqchar{\ell_4^*}{p}\tqchar{\ell_5^*}{p} = -1$ for \emph{both} of $p=\ell_1,\ell_2$\end{arxiv}.
\label{SUBLEM:F=Q(p3*p4*p5*)-imaginary:16,2-inert}
\end{enumerate}
Here $F = \QQ(\sqrt{\ell_3^* \ell_4^* \ell_5^*})$ and $L \colonequals F_{(2)}^1$ from Proposition \ref{PROP:Schmithals-2-class-field-splitting-idea} are totally imaginary.
\end{lemma}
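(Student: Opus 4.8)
The plan is to verify the hypotheses of Proposition \ref{PROP:Schmithals-2-class-field-splitting-idea} for the specific choice $F = \QQ(\sqrt{\ell_3^* \ell_4^* \ell_5^*})$ and then read off the two criteria. First I would confirm that $F$ is an admissible field. Since $\ell_3^* \ell_4^* \ell_5^* < 0$ the field $F$ is imaginary, and its discriminant is $\ell_3^* \ell_4^* \ell_5^*$, so $F/\QQ$ ramifies exactly at $\ell_3,\ell_4,\ell_5$ and is unramified at the $m = 2$ primes $\ell_1,\ell_2$. Moreover $F$ sits inside the genus field $\QQ(\sqrt{\ell_1^*},\ldots,\sqrt{\ell_5^*})$ of $K$ (as $\sqrt{\ell_3^* \ell_4^* \ell_5^*} = \sqrt{\ell_3^*}\sqrt{\ell_4^*}\sqrt{\ell_5^*}$), which is an everywhere unramified elementary abelian $2$-extension of $K$ and hence lies in $K_{(2)}^1 \le K_{(2)}^\infty$; thus $F \le K_{(2)}^\infty$. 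Since $F$ is imaginary, $L = F_{(2)}^1$ is totally imaginary, as claimed. With these checks in place, criterion \eqref{SUBLEM:F=Q(p3*p4*p5*)-imaginary:G-S} is literally the first bullet of Proposition \ref{PROP:Schmithals-2-class-field-splitting-idea}, so nothing more is required there.

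For criterion \eqref{SUBLEM:F=Q(p3*p4*p5*)-imaginary:16,2-inert}, the key input is the splitting behavior in Remark \ref{RMK:inert-principal-decomposition}. If $\ell_i$ (for $i = 1,2$) is inert in $F/\QQ$, then the unique prime of $F$ above $\ell_i$ is $\ell_i\mcal{O}_F$, which is manifestly principal; by the decomposition law (Proposition \ref{PROP:decomposition-law}) the largest power of $2$ dividing its order in $\Cl(F)$ is $1$, so it splits completely in $L/F$ into $\card{\Cl_2(F)}$ primes. Summing over the two inert primes gives $\#\{\wp \in \Spec\mcal{O}_L : \wp \mid \ell_1\ell_2\} = 2\card{\Cl_2(F)}$. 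Feeding this into the third (totally imaginary) bullet of Proposition \ref{PROP:Schmithals-2-class-field-splitting-idea} collapses the sufficient condition to the single inequality $\card{\Cl_2(F)} - 1 \ge 2 + 2\sqrt{1 + 2\card{\Cl_2(F)}}$.

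It then remains to solve this elementary inequality. Writing $n = \card{\Cl_2(F)}$ and squaring (valid once $n \ge 3$, so that both sides are nonnegative), the condition becomes $n^2 - 14n + 5 \ge 0$, whose larger root is $7 + \sqrt{44} \approx 13.63$; hence it holds exactly when $n \ge 14$. Because $n = \card{\Cl_2(F)}$ is a power of $2$, this is equivalent to $\card{\Cl_2(F)} \ge 16$, matching the stated hypothesis. I do not anticipate a genuine obstacle: the entire content is the bookkeeping of the first paragraph together with the observation that an inert rational prime generates a principal ideal of $F$ and therefore splits maximally in $L = F_{(2)}^1$. The one point worth flagging is the power-of-$2$ threshold — the analytic bound already succeeds at $n = 14$, but since $\card{\Cl_2(F)}$ jumps from $8$ to $16$ the criterion only engages at $16$, a ``near miss'' of exactly the kind discussed later in the paper.
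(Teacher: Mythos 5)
Your proposal is correct and follows essentially the same route as the paper: both proofs feed the inert/principal splitting trick (Remark \ref{RMK:inert-principal-decomposition} via Proposition \ref{PROP:decomposition-law}) into the totally imaginary bullet of Proposition \ref{PROP:Schmithals-2-class-field-splitting-idea}, obtaining $\#\{\wp\in\Spec\mcal{O}_L: \wp\mid \ell_1\ell_2\} = 2\card{\Cl_2(F)}$ and reducing to the same inequality with threshold $7+2\sqrt{11} = 13.633\ldots$, hence $\card{\Cl_2(F)}\ge 16$ since $\card{\Cl_2(F)}$ is a power of $2$. Your preliminary check that $F \le K_{(2)}^\infty$ via the genus field is a correct spelling-out of what the paper asserts implicitly in the sentence introducing the lemma.
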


\begin{remark}
Since $F$ is imaginary, Gauss' genus theory gives $\Cl_2(F) = \Cl^+_2(F) \simeq C_{2^m}\oplus C_{2^n}$ for some $m,n \ge 1$, so the inequality $\card{\Cl_2(F)} \ge 8$ is equivalent to $d_4\Cl_2(F) \ge 1$, or $\rank_{\FF_2} R_F \le 1$ by \Redei--Reichardt. (When $\ell_3^*,\ell_4^*,\ell_5^*$ are all negative and not equal to $-4$, this condition is particularly clean: in this case there are only $2$ types of \Redei matrices $R_F$ up to re-indexing, one with rank $1$ and the other with rank $2$.) Combined with Waterhouse's determination of $d_8\Cl^+_2(F)$ \cite{Waterhouse}, one could in principle obtain a criterion for $\card{\Cl_2(F)} \ge 16$.
\end{remark}

\begin{remark}
In view of the group-theoretic success in (for instance) Koch \cite{KochCentral}, Maire \cite{Maire}, and Benjamin--Lemmermeyer--Snyder \cite{BLS}, it could potentially be enlightening to analyze the ``near miss'' or ``borderline'' cases $\Cl_2(F) \simeq (2,4)$ or $\Cl_2(F) \simeq (2,8)$.
\end{remark}

\begin{example}
For the reader's convenience, we now list our attempts at using Lemma \ref{LEM:F=Q(p3*p4*p5*)-imaginary}, with the first column detailing the number of negative prime discriminants among $\ell_3^*,\ell_4^*,\ell_5^*$ in the application of Lemma \ref{LEM:F=Q(p3*p4*p5*)-imaginary}\eqref{SUBLEM:F=Q(p3*p4*p5*)-imaginary:16,2-inert}, and whether $-4\in\{\ell_3^*,\ell_4^*,\ell_5^*\}$.

\begin{tabular}{c|c|c|c|c|c|c}
Application & Progress? & Examples & $R_K$ & $\#\{p_i^*<0\}$ & $-4\in\{p_i^*\}$? & $d_4\Cl(K)$  \\

\hline
- & none & Ex. \ref{EX:matrix-B-small-examples} & $B$ & $5$ & no & $0$ \\

\hline
$3$ neg., no $-4$ & Thm. \ref{THM:5-neg-matrix-A} & Ex. \ref{EX:matrix-A-examples} & $A$ & $5$ & no & $0$ \\

\hline
$3$ neg., no $-4$ & Thm. \ref{THM:3-neg-matrix-28-alternative-progress} & Ex. \ref{EX:matrix-28-examples-mimicking-matrix-A} & 28 & $3$ & no & $0$ \\

\hline
$3$ neg., no $-4$ & Thm. \ref{THM:5-neg-matrix-C} & Ex. \ref{EX:matrix-C-examples} & $C$ & $5$ & yes & $0$ \\

\hline
$3$ neg., yes $-4$ & Thm. \ref{THM:5-neg-matrix-D1} & Ex. \ref{EX:matrix-D1-examples} & $D_1$ & $5$ & yes & $0$ \\

\hline
$1$ neg., yes $-4$ & Thm. \ref{THM:4-rank-2-family-D2} & Ex. \ref{EX:matrix-(d)-in-D2-examples} & $\mcal{D}_2$ & $3$ & yes & $2$ \\

\end{tabular}

\end{example}


\begin{proof}[Proof of Lemma \ref{LEM:F=Q(p3*p4*p5*)-imaginary}\eqref{SUBLEM:F=Q(p3*p4*p5*)-imaginary:16,2-inert}]

$L/\QQ$ is totally imaginary, so by Proposition \ref{PROP:Schmithals-2-class-field-splitting-idea} it suffices to check $\#\{\wp\in\Spec\mcal{O}_L: \wp\mid \ell_1\ell_2\} - 1 - \card{\Cl_2(F)} \ge 2+2\sqrt{1 + 2\card{\Cl_2(F)}}$.

Since $\ell_1,\ell_2$ are inert in $F/\QQ$, the decomposition law (specifically, the inert trick of Remark \ref{RMK:inert-principal-decomposition}) yields $\#\{\wp\in\Spec\mcal{O}_L: \wp\mid \ell_i\} = [L:F] = \card{\Cl_2(F)}$ for $i=1,2$. Thus
\[ \#\{\wp\in\Spec\mcal{O}_L: \wp\mid \ell_1\ell_2\}
\ge \underbrace{  \card{\Cl_2(F)} + \card{\Cl_2(F)} \ge 3 + \card{\Cl_2(F)} + 2\sqrt{1 + 2\card{\Cl_2(F)}}  }_\text{if $\card{\Cl_2(F)} \ge 7+2\sqrt{11} = 13.6332\ldots$} \]
verifies the desired criterion when $\card{\Cl_2(F)} \ge 16$.
\end{proof}

\subsubsection{\texorpdfstring{$F$}{F}: real quadratic field with exactly two positive prime discriminants}

Alternatively, when $\ell_4^*,\ell_5^* > 0$, taking the real quadratic field $F = \QQ(\sqrt{\ell_4^* \ell_5^*}) \le K_{(2)}^\infty$ in Proposition \ref{PROP:Schmithals-2-class-field-splitting-idea} gives Lemma \ref{LEM:F=Q(p4*p5*)-real-positive-p4*,p5*}.

\begin{remark}
\label{RMK:Schmithals-priority-Mouhib-acknow}

This particular idea seems to originate from Schmithals \cite{Schmithals} (and independently later by Hajir \cite[p. 17, last paragraph]{Hajir} and Mouhib \cite[Proposition 3.3]{Mouhib}, \cite{MouhibAcknowledgement}), who took $F = \QQ(\sqrt{(+5)(+461)})$---with class number $16$---to show that $\QQ(\sqrt{(+5)(-11)(+461)})$, an imaginary quadratic field with $2$-class group $C_4\oplus C_2$, has infinite $2$-tower.
\end{remark}

\begin{lemma}
\label{LEM:F=Q(p4*p5*)-real-positive-p4*,p5*}

If some two of the five prime discriminants, say $\ell_4^*,\ell_5^*$, are positive, then the imaginary quadratic field $K$ has infinite $2$-tower if any of the following criteria hold:
\begin{enumerate}
\item $d_2 \Cl(KL) \ge 2+2\sqrt{1 + 2\card{\Cl_2(F)}}$;
\label{SUBLEM:F=Q(p4*p5*)-real-positive-p4*,p5*:G-S}

\item $\card{\Cl_2(F)}\ge 8$ and \emph{at least $1$} of $\ell_1,\ell_2,\ell_3$ is inert in $F/\QQ$\begin{arxiv}, i.e. $\tqchar{\ell_4^*}{p}\tqchar{\ell_5^*}{p} = -1$ for at least $1$ of $p=\ell_1,\ell_2,\ell_3$\end{arxiv};
\label{SUBLEM:F=Q(p4*p5*)-real-positive-p4*,p5*:8,1-inert}

\item $\card{\Cl_2(F)}\ge 4$ and \emph{at least $2$} of $\ell_1,\ell_2,\ell_3$ is inert in $F/\QQ$\begin{arxiv}, i.e. $\tqchar{\ell_4^*}{p}\tqchar{\ell_5^*}{p} = -1$ for at least $2$ of $p=\ell_1,\ell_2,\ell_3$\end{arxiv}.
\label{SUBLEM:F=Q(p4*p5*)-real-positive-p4*,p5*:4,2-inert}
\end{enumerate}
Here $F = \QQ(\sqrt{p_4^* p_5^*})$ and (by non-ramification of $L/F$ at $\infty$) $L \colonequals F_{(2)}^1$ from Proposition \ref{PROP:Schmithals-2-class-field-splitting-idea} are totally real.
\end{lemma}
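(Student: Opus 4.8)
The plan is to verify the criteria of Proposition \ref{PROP:Schmithals-2-class-field-splitting-idea} for the totally real field $L = F_{(2)}^1$, in the same spirit as the proof of Lemma \ref{LEM:F=Q(p3*p4*p5*)-imaginary}\eqref{SUBLEM:F=Q(p3*p4*p5*)-imaginary:16,2-inert}, but now exploiting the \emph{three} unramified primes $\ell_1,\ell_2,\ell_3$ (rather than two) together with the \emph{absence} of a unit penalty in the totally real bullet. Criterion \eqref{SUBLEM:F=Q(p4*p5*)-real-positive-p4*,p5*:G-S} is literally the first bullet of Proposition \ref{PROP:Schmithals-2-class-field-splitting-idea}, so there is nothing to prove there. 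For \eqref{SUBLEM:F=Q(p4*p5*)-real-positive-p4*,p5*:8,1-inert} and \eqref{SUBLEM:F=Q(p4*p5*)-real-positive-p4*,p5*:4,2-inert}, since $L/\QQ$ is totally real (non-ramification of $L/F$ at $\infty$), it suffices by the fourth bullet of Proposition \ref{PROP:Schmithals-2-class-field-splitting-idea} (with $m=3$) to establish
\[ \#\{\wp\in\Spec\mcal{O}_L : \wp\mid \ell_1\ell_2\ell_3\} - 1 \ge 2 + 2\sqrt{1 + 2\card{\Cl_2(F)}}. \]

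Next I would bound the left-hand count from below by separating the three primes into ``inert'' and ``split'' contributions. Each $\ell_i$ is coprime to $\Delta_F = \ell_4^*\ell_5^*$, hence unramified in $F/\QQ$, so is either inert or split. If $\ell_i$ is inert, the inert trick of Remark \ref{RMK:inert-principal-decomposition} (the principal prime $\ell_i\mcal{O}_F$ is totally split in $L/F$) gives exactly $[L:F] = \card{\Cl_2(F)}$ primes of $L$ above $\ell_i$; if $\ell_i$ is split, it produces two primes of $F$, each lying below at least one prime of $L$, hence at least $2$ primes of $L$ above $\ell_i$. Therefore, if exactly $s$ of $\ell_1,\ell_2,\ell_3$ are inert, then $\#\{\wp:\wp\mid\ell_1\ell_2\ell_3\} \ge s\card{\Cl_2(F)} + 2(3-s)$.

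Finally I would substitute this bound and minimize over the allowed range of $s$. Writing $y \colonequals \card{\Cl_2(F)}$, the quantity $s\,y + 2(3-s) = s(y-2) + 6$ is increasing in $s$ (since $y > 2$), so its minimum is attained at the smallest permitted $s$. For \eqref{SUBLEM:F=Q(p4*p5*)-real-positive-p4*,p5*:8,1-inert} we have $s\ge 1$, giving $\#\{\wp\}\ge y+4$, and the target reduces to $y+1 \ge 2\sqrt{1+2y}$; squaring (both sides positive) yields $y^2 - 6y - 3 \ge 0$, valid for all $y\ge 8 > 3+2\sqrt3$. For \eqref{SUBLEM:F=Q(p4*p5*)-real-positive-p4*,p5*:4,2-inert} we have $s\ge 2$, giving $\#\{\wp\}\ge 2y+2$, and the target reduces to $2y-1 \ge 2\sqrt{1+2y}$, i.e.\ $4y^2 - 12y - 3 \ge 0$, valid for all $y\ge 4$. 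Both thresholds $y\ge 8$ and $y\ge 4$ are exactly the hypotheses imposed.

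The one place demanding care---and the main obstacle---is that the inert primes \emph{alone} do not suffice in the borderline cases $\card{\Cl_2(F)} = 8$ for \eqref{SUBLEM:F=Q(p4*p5*)-real-positive-p4*,p5*:8,1-inert} and $\card{\Cl_2(F)} = 4$ for \eqref{SUBLEM:F=Q(p4*p5*)-real-positive-p4*,p5*:4,2-inert}: there $s\card{\Cl_2(F)} - 1$ just misses the threshold $2 + 2\sqrt{1 + 2\card{\Cl_2(F)}}$, and one genuinely needs the extra ``$+2$'' per split prime supplied by the remaining unramified $\ell_i$'s. This additional room, combined with the vanishing unit penalty for totally real $L$ (where the infinite-place ramification $[L:\QQ]$ exactly cancels $d_2\mcal{O}_L^\times = [L:\QQ]$), is precisely what permits the smaller thresholds $8$ and $4$ here, in contrast to the threshold $16$ forced by the unit penalty in the imaginary analogue Lemma \ref{LEM:F=Q(p3*p4*p5*)-imaginary}.
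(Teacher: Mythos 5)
Your proposal is correct and matches the paper's own proof essentially step for step: both reduce to the totally real criterion of Proposition \ref{PROP:Schmithals-2-class-field-splitting-idea}, count $[L:F]=\card{\Cl_2(F)}$ primes of $L$ above each inert $\ell_i$ via the inert trick of Remark \ref{RMK:inert-principal-decomposition}, get at least $2$ primes above each remaining $\ell_i$, and verify the same numerical thresholds $3+2\sqrt{3}=6.46\ldots$ and $\tfrac12(3+2\sqrt{3})=3.23\ldots$. The only cosmetic difference is that you justify the ``at least $2$'' for split primes by counting the two primes of $F$ below primes of $L$ directly, whereas the paper routes through the inclusion $\QQ(\sqrt{\ell_4^*},\sqrt{\ell_5^*})\le L$ (valid since $\ell_4^*,\ell_5^*>0$); both arguments are sound.
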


\begin{example}
\label{EX:source-of-failure-and-4-rank-direction-motivation}

When Lemma \ref{LEM:F=Q(p4*p5*)-real-positive-p4*,p5*}\eqref{SUBLEM:F=Q(p4*p5*)-real-positive-p4*,p5*:8,1-inert} fails, it is natural to ask (assuming $K$ has infinite $2$-tower) \emph{where} the failure comes from: Golod--Shafarevich, or the genus theory input? For instance, take $K = \QQ(\sqrt{(-7)(-3)(-8)(+29)(+5)})$, which has an open \Redei matrix
\[ R_K = \#49 = \left[\begin{array}{ccc|cc}
- & 1 & 0 & 0 & 1 \\
0 & - & 1 & 1 & 1 \\
1 & 0 & - & 1 & 1 \\
\hline
0 & 1 & 1 & - & 0 \\
1 & 1 & 1 & 0 & -
\end{array}\right]. \]
(For partial positive progress on matrix 49, see Theorem \ref{THM:3-neg-matrix-34a,49}.) Here $F \colonequals \QQ(\sqrt{(+29)(+5)})$ has class number $4$ (as well as narrow class number $4$), so its Hilbert $2$-class field $L \colonequals F_{(2)}^1 = F^1$ coincides with its Hilbert class field, which can be computed in SAGE.

The genus theory input gives a lower bound
\[ d_2\Cl(KL) \ge \#\{\wp\in\Spec\mcal{O}_L: \wp\mid (-7)(-3)(-8)\} - 1 \ge 4+2+2 - 1 = 7\]
---here $7$ is inert in $F/\QQ$, and then splits completely into $4$ primes in $L/F$, while $3,2$ split into $2$ primes in $F/\QQ$ and then stay inert in $L/F$, due to Theorem \ref{THM:CPGT-F=Q(p4*p5*)-real-positive-p4*,p5*}. In fact, here the bound is tight: the class group $\Cl(KL)$ has cyclic direct sum decomposition $(336, 336, 4, 4, 2, 2, 2)$ (under the {\tt proof=False} flag in SAGE, i.e. assuming GRH for a reasonable run-time), so $2$-rank exactly $7$, which is just shy of the $2+2\sqrt{8+1} = 8$ needed for Golod--Shafarevich. But Golod--Shafarevich doesn't take into account the $4$-rank of $4$, or the $8$- and $16$- ranks of $2$, so it would be nice to have a strengthening incorporating such data; see Question \ref{QUES:stronger-4-rank-Golod-Shaf?} for further speculation.


\lstset{language = Python, basicstyle  = \ttfamily, breaklines=true}
\begin{lstlisting}
R.<x> = PolynomialRing(QQ)
F.<d> = (x^2 - 5*29).splitting_field(); F
F.class_group(); F.narrow_class_group()

H.<a> = F.hilbert_class_field(); H
H.class_group(); H.narrow_class_group()

Z.<u> = H.extension(x^2 - 5*29*(-8)*(-3)*(-7)); Y.<v> = Z.absolute_field(); Y
Z.class_group(proof=False); #Z.class_group(proof=True)
\end{lstlisting}

\begin{markednewpar}\leavevmode\end{markednewpar}\begin{arxiv}
Here is the output from SAGE.

\begin{lstlisting}
Number Field in d with defining polynomial x^2 - 145
Class group of order 4 with structure C4 of Number Field in d with defining polynomial x^2 - 145
Multiplicative Abelian group isomorphic to C4
Number Field in a with defining polynomial x^4 - 6*x^2 - 5*x - 1 over its base field
Class group of order 1 of Number Field in a with defining polynomial x^4 - 6*x^2 - 5*x - 1 over its base field
Trivial Abelian group
Number Field in v with defining polynomial x^16 + 193696*x^14 + 20*x^13 + 16471863912*x^12 + 968240*x^11 + 803220850771478*x^10 - 61753673800*x^9 + 24564405504767344470*x^8 - 5925738645060300*x^7 + 482450842595634923493592*x^6 - 178917857880265611700*x^5 + 5942613705132902419583139177*x^4 - 2412654792740884887465160*x^3 + 41972919293354024066614415135974*x^2 - 12387969691342454354951827720*x + 130154117938774456595949675427812841
Class group of order 14450688 with structure C336 x C336 x C4 x C4 x C2 x C2 x C2 of Number Field in u with defining polynomial x^2 + 24360 over its base field
\end{lstlisting}
\end{arxiv}
\end{example}

\begin{remark}
Recall that whether $\card{\Cl^+_2(F)} = \card{\Cl_2(F)}$ is subtle for real quadratic fields $F$. But at least both $\Cl^+_2(F)$ and $\Cl_2(F)$ are cyclic here, so by \Redei--Reichardt, the inequality $\card{\Cl^+_2(F)} \ge 4$ is equivalent to $\tqchar{\ell_4^*}{\ell_5} = \tqchar{\ell_5^*}{\ell_4} = +1$.
\end{remark}

\begin{remark}
In view of the group-theoretic success in (for instance) Koch \cite{KochCentral}, Maire \cite{Maire}, and Benjamin--Lemmermeyer--Snyder \cite{BLS}, it could potentially be enlightening to analyze the ``near miss'' or ``borderline'' cases $\Cl_2(F) \simeq C_2$ and $\Cl_2(F) \simeq C_4$.
\end{remark}

\begin{example}
See the applications and examples under Theorem \ref{THM:3-neg-matrix-34a,49} (progress on matrices 34a and 49, using Lemma \ref{LEM:F=Q(p4*p5*)-real-positive-p4*,p5*}\eqref{SUBLEM:F=Q(p4*p5*)-real-positive-p4*,p5*:8,1-inert}, with Example \ref{EX:matrices-34a,49-examples}), in the case where $\Delta_K \not\equiv4\pmod{8}$ has $3$ negative prime discriminants and $d_4\Cl(K) = 0$.
\end{example}

\begin{proof}[Proof of Lemma \ref{LEM:F=Q(p4*p5*)-real-positive-p4*,p5*}\eqref{SUBLEM:F=Q(p4*p5*)-real-positive-p4*,p5*:8,1-inert}]
$L/\QQ$ is totally real, so by Proposition \ref{PROP:Schmithals-2-class-field-splitting-idea} it suffices to check $\#\{\wp\in\Spec\mcal{O}_L: \wp\mid \ell_1\ell_2\ell_3\} - 1 \ge 2+2\sqrt{1 + 2\card{\Cl_2(F)}}$.

Say $\ell_1$ is inert in $F/\QQ$, so $\#\{\wp\in\Spec\mcal{O}_L: \wp\mid \ell_1\} = [L:F] = \card{\Cl_2(F)}$ by the decomposition law (specifically, the inert trick of Remark \ref{RMK:inert-principal-decomposition}). Furthermore, any prime $p\nmid \ell_4^*\ell_5^*$ splits into $2$ or $4$ primes in $\QQ(\sqrt{\ell_4^*},\sqrt{\ell_5^*})/\QQ$, hence at least that many in the extension $L/\QQ$ (inclusion due to $\ell_4^*,\ell_5^*>0$). Thus
\[ \#\{\wp\in\Spec\mcal{O}_L: \wp\mid \ell_1\ell_2\ell_3\}
\ge \underbrace{  \card{\Cl_2(F)} + 2 + 2 \ge 3 + 2\sqrt{1 + 2\card{\Cl_2(F)}}  }_\text{if $\card{\Cl_2(F)} \ge 3+2\sqrt{3} = 6.4641\ldots$} \]
verifies the desired criterion when $\card{\Cl_2(F)} \ge 8$.
\end{proof}

\begin{proof}[Proof of Lemma \ref{LEM:F=Q(p4*p5*)-real-positive-p4*,p5*}\eqref{SUBLEM:F=Q(p4*p5*)-real-positive-p4*,p5*:4,2-inert}]
\begin{arxiv}$L/\QQ$ is totally real, so by Proposition \ref{PROP:Schmithals-2-class-field-splitting-idea} it suffices to check $\#\{\wp\in\Spec\mcal{O}_L: \wp\mid \ell_1\ell_2\ell_3\} - 1 \ge 2+2\sqrt{1 + 2\card{\Cl_2(F)}}$.

Say $\ell_1,\ell_2$ are inert in $F/\QQ$, so $\#\{\wp\in\Spec\mcal{O}_L: \wp\mid \ell_i\} = [L:F] = \card{\Cl_2(F)}$ for $i=1,2$ by the decomposition law (specifically, the inert trick of Remark \ref{RMK:inert-principal-decomposition}). Furthermore, any prime $p\nmid \ell_4^*\ell_5^*$ splits into $2$ or $4$ primes in $\QQ(\sqrt{\ell_4^*},\sqrt{\ell_5^*})/\QQ$, hence at least that many in the extension $L/\QQ$ (inclusion due to $\ell_4^*,\ell_5^*>0$). \end{arxiv}This time
\[ \#\{\wp\in\Spec\mcal{O}_L: \wp\mid \ell_1\ell_2\ell_3\}
\ge \underbrace{  \card{\Cl_2(F)} + \card{\Cl_2(F)} + 2 \ge 3 + 2\sqrt{1 + 2\card{\Cl_2(F)}}  }_\text{if $\card{\Cl_2(F)} \ge \frac12 (3+2\sqrt{3}) = 3.2320\ldots$} \]
verifies the desired criterion when $\card{\Cl_2(F)} \ge 4$.
\end{proof}

\subsubsection{\texorpdfstring{$F$}{F}: imaginary quadratic field with two prime discriminants}

On the other hand, when $\ell_4^* \ell_5^* < 0$, taking the imaginary quadratic field $F = \QQ(\sqrt{\ell_4^* \ell_5^*}) \le K_{(2)}^\infty$ in Proposition \ref{PROP:Schmithals-2-class-field-splitting-idea} gives Lemma \ref{LEM:F=Q(p4*p5*)-imaginary}.

\begin{lemma}
\label{LEM:F=Q(p4*p5*)-imaginary}
If some two of the five prime discriminants, say $\ell_4^*,\ell_5^*$, have opposite sign, then the imaginary quadratic field $K$ has infinite $2$-tower if any of the following criteria hold:
\begin{enumerate}
\item $d_2 \Cl(KL) \ge 2+2\sqrt{1 + 2\card{\Cl_2(F)}}$;
\label{SUBLEM:F=Q(p4*p5*)-imaginary:G-S}

\item $\card{\Cl_2(F)}\ge 16$ and \emph{at least $2$} of $\ell_1,\ell_2,\ell_3$ is inert in $F/\QQ$\begin{arxiv}, i.e. $\tqchar{\ell_4^*}{p}\tqchar{\ell_5^*}{p} = -1$ for at least $2$ of $p=\ell_1,\ell_2,\ell_3$\end{arxiv};
\label{SUBLEM:F=Q(p4*p5*)-imaginary:16,2-inert}

\item $\card{\Cl_2(F)}\ge 4$, \emph{at least $1$} of $\ell_1,\ell_2,\ell_3$ is inert in $F/\QQ$, and \emph{at least $1$} of $\ell_1,\ell_2,\ell_3$ splits completely in $L/\QQ$.
\label{SUBLEM:F=Q(p4*p5*)-imaginary:4,1-inert,1-split}
\end{enumerate}
Here $F = \QQ(\sqrt{p_4^* p_5^*})$ and $L \colonequals F_{(2)}^1$ from Proposition \ref{PROP:Schmithals-2-class-field-splitting-idea} are totally imaginary.
\end{lemma}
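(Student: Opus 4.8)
The plan is to follow the proofs of Lemmas \ref{LEM:F=Q(p3*p4*p5*)-imaginary} and \ref{LEM:F=Q(p4*p5*)-real-positive-p4*,p5*}, applying the third (totally imaginary) bullet of Proposition \ref{PROP:Schmithals-2-class-field-splitting-idea}. First I would record the structural facts. Since $F = \QQ(\sqrt{\ell_4^* \ell_5^*})$ is imaginary quadratic with exactly two prime discriminants, Gauss' genus theory gives $d_2\Cl(F) = 1$, so $\Cl_2(F)$ is cyclic and $\card{\Cl_2(F)}$ is a power of $2$. As $F$ ramifies only at $\ell_4,\ell_5$, it is unramified at $\ell_1,\ell_2,\ell_3$, so Proposition \ref{PROP:Schmithals-2-class-field-splitting-idea} applies with $m = 3$; because $L = F_{(2)}^1$ is totally imaginary, criteria \eqref{SUBLEM:F=Q(p4*p5*)-imaginary:16,2-inert} and \eqref{SUBLEM:F=Q(p4*p5*)-imaginary:4,1-inert,1-split} both reduce to verifying
\[ S - 1 - \card{\Cl_2(F)} \ge 2 + 2\sqrt{1 + 2\card{\Cl_2(F)}}, \qquad S \colonequals \#\{\wp\in\Spec\mcal{O}_L : \wp\mid \ell_1\ell_2\ell_3\}. \]
Criterion \eqref{SUBLEM:F=Q(p4*p5*)-imaginary:G-S} is immediate from the first bullet of Proposition \ref{PROP:Schmithals-2-class-field-splitting-idea}.

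For criterion \eqref{SUBLEM:F=Q(p4*p5*)-imaginary:16,2-inert}, say $\ell_1,\ell_2$ are inert in $F/\QQ$. By the inert trick (Remark \ref{RMK:inert-principal-decomposition}), each $\ell_i\mcal{O}_F$ is principal and hence splits completely in $L/F$ into $[L:F] = \card{\Cl_2(F)}$ primes, so $S \ge 2\card{\Cl_2(F)}$. Substituting, the displayed inequality reduces to $\card{\Cl_2(F)} - 3 \ge 2\sqrt{1 + 2\card{\Cl_2(F)}}$, whose positive solution is $\card{\Cl_2(F)} \ge 7 + 2\sqrt{11} = 13.63\ldots$; since $\card{\Cl_2(F)}$ is a power of $2$, the hypothesis $\card{\Cl_2(F)} \ge 16$ clears the threshold.

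For criterion \eqref{SUBLEM:F=Q(p4*p5*)-imaginary:4,1-inert,1-split}, the key point is that the inert prime and the completely split prime are forced to be distinct: an inert prime contributes exactly $\card{\Cl_2(F)}$ primes to $L$, strictly fewer than the $[L:\QQ] = 2\card{\Cl_2(F)}$ primes demanded by total splitting in $L/\QQ$. Hence among $\ell_1,\ell_2,\ell_3$ I may pick one inert prime (contributing $\card{\Cl_2(F)}$), a distinct prime splitting completely in $L/\QQ$ (contributing $2\card{\Cl_2(F)}$), and a third, distinct prime which---being unramified in $F$---contributes at least $2$ primes to $L$ (either it splits in $F$, already yielding two primes, or it is inert, yielding $\card{\Cl_2(F)} \ge 2$). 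Thus $S \ge 3\card{\Cl_2(F)} + 2$, and the inequality reduces to $2\card{\Cl_2(F)} - 1 \ge 2\sqrt{1 + 2\card{\Cl_2(F)}}$, with positive solution $\card{\Cl_2(F)} \ge \frac{1}{2}(3 + 2\sqrt{3}) = 3.23\ldots$, so the hypothesis $\card{\Cl_2(F)} \ge 4$ suffices.

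The main obstacle---the only step beyond routine substitution and threshold-checking---is the bookkeeping in criterion \eqref{SUBLEM:F=Q(p4*p5*)-imaginary:4,1-inert,1-split}: one must justify the distinctness of the inert and completely split primes and, crucially, retain the contribution of the third prime. Dropping that third term (using only $S \ge 3\card{\Cl_2(F)}$) fails the inequality already at $\card{\Cl_2(F)} = 4$, so the sharp counting is genuinely needed there. Everything else is mechanical once the decomposition law (Proposition \ref{PROP:decomposition-law}) and the cyclicity of $\Cl_2(F)$ are in hand.
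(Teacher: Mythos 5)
Your proposal is correct and takes essentially the same approach as the paper: the totally imaginary criterion of Proposition \ref{PROP:Schmithals-2-class-field-splitting-idea}, the decomposition law with the inert trick of Remark \ref{RMK:inert-principal-decomposition}, and the same threshold computations (your threshold $\frac12(3+2\sqrt{3})$ in criterion \eqref{SUBLEM:F=Q(p4*p5*)-imaginary:4,1-inert,1-split} matches the paper exactly). The only cosmetic differences are that in criterion \eqref{SUBLEM:F=Q(p4*p5*)-imaginary:16,2-inert} you discard the third prime's contribution and get the slightly weaker threshold $7+2\sqrt{11}$ instead of the paper's $5+2\sqrt{7}$ (both cleared by $\card{\Cl_2(F)}\ge 16$), and in criterion \eqref{SUBLEM:F=Q(p4*p5*)-imaginary:4,1-inert,1-split} you justify the third prime's $+2$ by a direct split/inert dichotomy in $F$ (with the automatic distinctness of the inert and completely split primes made explicit), whereas the paper invokes splitting in the biquadratic field $\QQ(\sqrt{\ell_4^*},\sqrt{\ell_5^*})\le L$ --- both arguments are valid.
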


\begin{example}
See the applications and examples under Theorems \ref{THM:3-neg-matrix-32} (progress on matrix 32, using Lemma \ref{LEM:F=Q(p4*p5*)-imaginary}\eqref{SUBLEM:F=Q(p4*p5*)-imaginary:16,2-inert}, with Example \ref{EX:matrix-32-examples}) and \ref{THM:3-neg-matrix-16,28} (progress on matrices 16 and 28, using Lemma \ref{LEM:F=Q(p4*p5*)-imaginary}\eqref{SUBLEM:F=Q(p4*p5*)-imaginary:4,1-inert,1-split}, with Example \ref{EX:matrices-16,28-examples}), both in the case where $\Delta_K \not\equiv4\pmod{8}$ has $3$ negative prime discriminants and $d_4\Cl(K) = 0$.
\end{example}

\begin{remark}
Since $F$ is imaginary, $\Cl_2(F) = \Cl^+_2(F) \simeq C_{2^n}$ for some $n\ge1$, so by \Redei--Reichardt, the inequality $\card{\Cl_2(F)} \ge 4$ is equivalent to $\tqchar{\ell_4^*}{\ell_5} = \tqchar{\ell_5^*}{\ell_4} = +1$.
\end{remark}

\begin{proof}[Proof of Lemma \ref{LEM:F=Q(p4*p5*)-imaginary}\eqref{SUBLEM:F=Q(p4*p5*)-imaginary:16,2-inert}]
$L/\QQ$ is totally imaginary, so by Proposition \ref{PROP:Schmithals-2-class-field-splitting-idea} it suffices to check $\#\{\wp\in\Spec\mcal{O}_L: \wp\mid \ell_1\ell_2\ell_3\} - 1 - \card{\Cl_2(F)} \ge 2+2\sqrt{1 + 2\card{\Cl_2(F)}}$.

Say $\ell_1,\ell_2$ are inert in $F/\QQ$, so $\#\{\wp\in\Spec\mcal{O}_L: \wp\mid \ell_i\} = [L:F] = \card{\Cl_2(F)}$ for $i=1,2$ by the decomposition law (specifically, the inert trick of Remark \ref{RMK:inert-principal-decomposition}). Furthermore, any prime $p\nmid \ell_4^*\ell_5^*$ splits into $2$ or $4$ primes in $\QQ(\sqrt{\ell_4^*},\sqrt{\ell_5^*})/\QQ$, hence at least that many in the extension $L/\QQ$ (inclusion due to $F$ imaginary). Thus
\[ \#\{\wp\in\Spec\mcal{O}_L: \wp\mid \ell_1\ell_2\ell_3\}
\ge \underbrace{  \card{\Cl_2(F)} + \card{\Cl_2(F)} + 2 \ge 3 + \card{\Cl_2(F)} + 2\sqrt{1 + 2\card{\Cl_2(F)}}  }_\text{if $\card{\Cl_2(F)} \ge 5+2\sqrt{7} = 10.2915\ldots$} \]
verifies the desired criterion when $\card{\Cl_2(F)} \ge 16$.
\end{proof}

\begin{proof}[Proof of Lemma \ref{LEM:F=Q(p4*p5*)-imaginary}\eqref{SUBLEM:F=Q(p4*p5*)-imaginary:4,1-inert,1-split}]
$L/\QQ$ is totally imaginary, so by Proposition \ref{PROP:Schmithals-2-class-field-splitting-idea} it suffices to check $\#\{\wp\in\Spec\mcal{O}_L: \wp\mid \ell_1\ell_2\ell_3\} - 1 - \card{\Cl_2(F)} \ge 2+2\sqrt{1 + 2\card{\Cl_2(F)}}$.

Say $\ell_1$ is inert in $F/\QQ$ and $\ell_2$ splits completely in $L/\QQ$, so $\#\{\wp\in\Spec\mcal{O}_L: \wp\mid \ell_1\} = [L:F] = \card{\Cl_2(F)}$ by the decomposition law, and $\#\{\wp\in\Spec\mcal{O}_L: \wp\mid \ell_2\} = [L:\QQ] = 2\card{\Cl_2(F)}$. Furthermore, any prime $p\nmid \ell_4^*\ell_5^*$ splits into $2$ or $4$ primes in $\QQ(\sqrt{\ell_4^*},\sqrt{\ell_5^*})/\QQ$, hence at least that many in the extension $L/\QQ$ (inclusion due to $F$ imaginary). Thus
\[ \#\{\wp\in\Spec\mcal{O}_L: \wp\mid \ell_1\ell_2\ell_3\}
\ge \underbrace{  \card{\Cl_2(F)} + 2\card{\Cl_2(F)} + 2 \ge 3 + \card{\Cl_2(F)} + 2\sqrt{1 + 2\card{\Cl_2(F)}}  }_\text{if $\card{\Cl_2(F)} \ge \frac12 (3+2\sqrt{3}) = 3.2320\ldots$} \]
verifies the desired criterion when $\card{\Cl_2(F)} \ge 4$.
\end{proof}

\section{Application to Martinet's question}
\label{SEC:concrete-new-2-towers-results}



We now apply the lemmas from Section \ref{SUBSEC:concrete-new-lemmas} to several sub-cases of open \Redei matrices $R_K$ of rank $2$ or $4$ (corresponding by \Redei--Reichardt to $d_4\Cl(K) = 2$ or $d_4 \Cl(K) = 0$, respectively). We will use the labeling of \Redei matrices from Sueyoshi \cite{Sueyoshi5neg,Sueyoshi3neg} and Benjamin \cite{Ben2015}. In this section, we often write \Redei matrices without the diagonal entries, which can be recovered by the fact that column sums are $0$.

\subsection{\texorpdfstring{$4$-rank $2$}{4-rank 2}}
\label{SUBSEC:4-rank-2}

When $K$ (with five prime discriminants $p_i^*$) has $d_4\Cl(K) = 2$, there is exactly one family of open \Redei matrices, referred to as ``Family $\mcal{D}_2$'' (we use a different font to avoid confusion with matrix $D_2$ in Section \ref{SUBSEC:5-neg-4-rank-0-is-4-mod-8}) by Benjamin \cite[pp. 127--128]{Ben2015} (and falling under ``Case 60'' in Sueyoshi \cite[p. 181, with discussion on p. 184]{Sueyoshi3neg}; note that Benjamin has a minor typo in his listing of the Kronecker symbols in the first paragraph of \cite[p. 127, Section 4. Case 1]{Ben2015}); this is originally due to Benjamin \cite{Benjamin02} (2002). More precisely, the family $\mcal{D}_2$ consists of $K$ with exactly three negative discriminants $p_1^*,p_2^*,p_3^*$ (up to re-indexing) and $\Delta_K \equiv 4\pmod{8}$---so say $p_1^* = -4$, up to re-indexing---and \Redei matrix $R_K$ of the form
\[
 \left[\begin{array}{c|cc|cc}
- & 1 & 1 & 0 & 0 \\
\hline
a_{21} & - & 1 & 1 & 1 \\
a_{31} & 0 & - & 1 & 1 \\
\hline
1 & 1 & 1 & - & a_{45} \\
1 & 1 & 1 & a_{54} (= a_{54}) & -
\end{array}\right],
\]
with four specific possibilities (following Benjamin \cite[p. 128]{Ben2015}):
\begin{enumerate}[(a)]
\item $(a_{45},a_{54}) = (0,0)$ and $(a_{21},a_{31}) = (1,0)$ (so in particular, $a_{11} = 1$);

\item $(a_{45},a_{54}) = (0,0)$ and $(a_{21},a_{31}) = (0,1)$ (so in particular, $a_{11} = 1$);

\item $(a_{45},a_{54}) = (1,1)$ and $(a_{21},a_{31}) = (1,0)$ (so in particular, $a_{11} = 1$);

\item $(a_{45},a_{54}) = (1,1)$ and $(a_{21},a_{31}) = (1,1)$ (so in particular, $a_{11} = 0$).
\end{enumerate}

Benjamin makes (positive) progress on matrices (a) \cite[p. 128, Example 3]{Ben2015}, (b) \cite[p. 128, Example 4]{Ben2015}, and (c) \cite[p. 128, Examples 1 and 2]{Ben2015}---but not (d). We now present some further progress on the $\mcal{D}_2$ family.

\begin{theorem}[Further progress on family $\mcal{D}_2$]
\label{THM:4-rank-2-family-D2}

Suppose $R_K \in \mcal{D}_2$, and set $F \colonequals \QQ(\sqrt{p_1^* p_4^* p_5^*})$ (imaginary quadratic field, so $\Cl(F) = \Cl^+(F)$). Then $p_2,p_3$ are inert in $F/\QQ$.

\Redei--Reichardt says $d_4 \Cl^+_2(F) = (3-1) - 1 = 1$ (since $R_F = \left[\begin{smallmatrix}- & 0 & 0 \\ 1 & - & \alpha \\ 1 & \alpha & -\end{smallmatrix}\right] = \left[\begin{smallmatrix}0 & 0 & 0 \\ 1 & \alpha & \alpha \\ 1 & \alpha & \alpha\end{smallmatrix}\right]$, where $\alpha \colonequals a_{45} = a_{54}$), so by Gauss' genus theory, $\Cl_2(F) \simeq C_2\oplus C_{2^n}$ for some $n\ge2$. By Lemma \ref{LEM:F=Q(p3*p4*p5*)-imaginary}\eqref{SUBLEM:F=Q(p3*p4*p5*)-imaginary:16,2-inert}, $K$ has infinite $2$-tower if $\card{\Cl_2(F)} \ge 16$, i.e. $n\ge3$, holds.
\end{theorem}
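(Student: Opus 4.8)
The plan is to read every needed Kronecker symbol directly off $R_K$, deduce the inertness of $p_2, p_3$ and the precise shape of $R_F$, and then feed the result into Lemma \ref{LEM:F=Q(p3*p4*p5*)-imaginary}. First I would pin down the signs. Since $R_K \in \mcal{D}_2$ has exactly three negative prime discriminants and $\Delta_K \equiv 4 \pmod{8}$, the discriminant of $2$ is $p_1^* = -4$, while $p_2^*, p_3^* < 0$ are odd and $p_4^*, p_5^* > 0$. Hence $\Delta_F = p_1^* p_4^* p_5^* = (-4)\,p_4^* p_5^* < 0$, so $F$ is imaginary quadratic, and $p_2, p_3$ are odd and coprime to $\Delta_F$, hence unramified in $F$. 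For $j \in \{2,3\}$ the prime $p_j$ is inert in $F = \QQ(\sqrt{p_1^* p_4^* p_5^*})$ exactly when $\tqchar{p_1^*}{p_j}\tqchar{p_4^*}{p_j}\tqchar{p_5^*}{p_j} = -1$, i.e. when $a_{1j} + a_{4j} + a_{5j} \equiv 1 \pmod{2}$; reading columns $2$ and $3$ of $R_K$ gives $a_{1j} = a_{4j} = a_{5j} = 1$, whose sum is $3 \equiv 1$, so both $p_2$ and $p_3$ are inert.

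Next I would assemble $R_F$ over the ordered discriminants $(p_1^*, p_4^*, p_5^*)$, again straight from $R_K$. The symbols $\tqchar{p_1^*}{p_4} = (-1)^{a_{14}} = 1$ and $\tqchar{p_1^*}{p_5} = (-1)^{a_{15}} = 1$ give $0, 0$ in the top row, while $\tqchar{p_4^*}{2} = (-1)^{a_{41}} = -1$ and $\tqchar{p_5^*}{2} = (-1)^{a_{51}} = -1$ give $1, 1$ down the first column. The one point needing the reciprocity of prime discriminants is $\alpha \colonequals a_{45} = a_{54}$: since $p_4^*, p_5^* > 0$ (and neither is $-4$), one has $\tqchar{p_4^*}{p_5} = \tqchar{p_5^*}{p_4}$, consistent with all four cases (a)--(d) of $\mcal{D}_2$. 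Completing the diagonal by the column-sum-zero rule yields $R_F = \left[\begin{smallmatrix} 0 & 0 & 0 \\ 1 & \alpha & \alpha \\ 1 & \alpha & \alpha \end{smallmatrix}\right]$, whose rows $0$, $(1,\alpha,\alpha)$, $(1,\alpha,\alpha)$ give $\rank_{\FF_2} R_F = 1$. Rédei--Reichardt then yields $d_4\Cl^+(F) = (3-1) - 1 = 1$; since $F$ is imaginary $\Cl_2(F) = \Cl^+_2(F)$, and Gauss' genus theory gives $d_2\Cl(F) = 3 - 1 = 2$, so a $2$-group of $2$-rank $2$ and $4$-rank $1$ must be $\Cl_2(F) \simeq C_2 \oplus C_{2^n}$ with $n \ge 2$.

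Finally I would invoke Lemma \ref{LEM:F=Q(p3*p4*p5*)-imaginary}\eqref{SUBLEM:F=Q(p3*p4*p5*)-imaginary:16,2-inert} under the relabeling $(\ell_3^*, \ell_4^*, \ell_5^*) = (p_1^*, p_4^*, p_5^*)$ and $(\ell_1, \ell_2) = (p_2, p_3)$: these three discriminants have negative product, the two distinguished primes $p_2, p_3$ are inert in $F$ as shown, and $\card{\Cl_2(F)} = 2^{n+1} \ge 16$ is precisely $n \ge 3$, giving the infinite $2$-tower for $K$ in that range. The argument is essentially all bookkeeping; the only places that demand care are transcribing the correct Kronecker symbols from $R_K$ into both the inertness conditions and $R_F$ — in particular using reciprocity to get the symmetric entry $\alpha = a_{45} = a_{54}$ — and checking that the relabeling genuinely satisfies the hypotheses of Lemma \ref{LEM:F=Q(p3*p4*p5*)-imaginary}, which is exactly where a sign or indexing slip would be most damaging.
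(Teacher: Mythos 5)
Your proposal is correct and takes essentially the same route as the paper, whose ``proof'' is embedded in the theorem statement itself: inertness of $p_2,p_3$ read off columns $2$ and $3$ of $R_K$ via $a_{1j}+a_{4j}+a_{5j}\equiv 1\pmod 2$, the assembly $R_F = \left[\begin{smallmatrix}0 & 0 & 0 \\ 1 & \alpha & \alpha \\ 1 & \alpha & \alpha\end{smallmatrix}\right]$ of rank $1$, R\'edei--Reichardt plus Gauss' genus theory giving $\Cl_2(F)\simeq C_2\oplus C_{2^n}$ with $n\ge2$, and then Lemma \ref{LEM:F=Q(p3*p4*p5*)-imaginary}\eqref{SUBLEM:F=Q(p3*p4*p5*)-imaginary:16,2-inert} under the relabeling you describe. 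Your extra verifications---that $p_2,p_3$ are odd and coprime to $\Delta_F$, that reciprocity is consistent with $a_{45}=a_{54}$ (which the definition of $\mcal{D}_2$ already imposes), and that $\card{\Cl_2(F)}=2^{n+1}\ge 16$ is exactly $n\ge3$---are correct bookkeeping that the paper leaves implicit.
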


\begin{remark}
In the remaining case $\Cl_2(F) \simeq C_2\oplus C_4$ (for family $\mcal{D}_2$), it is plausible that group-theoretic methods could give helpful additional structure for the $2$-tower of $K$.
\end{remark}




\begin{example}[Examples with $R_K = (d)\in \mcal{D}_2$]
\label{EX:matrix-(d)-in-D2-examples}

Start with any three prime discriminants $p_1^* = -4$ and $p_4^*,p_5^* > 0$ (necessarily $p_4,p_5\equiv1\pmod{4}$) such that $R_F = \left[\begin{smallmatrix}- & 0 & 0 \\ 1 & - & 1 \\ 1 & 1 & -\end{smallmatrix}\right]$. It is then standard (Chinese remainder theorem and Dirichlet's theorem) to find primes $p_2,p_3\equiv3\pmod4$ such that $p_2^*,p_3^*\equiv1\pmod{4}$ satisfy the correct Kronecker symbols with respect to each other and with respect to $p_1,p_4,p_5$.

For instance, based on \url{http://oeis.org/A046013} (listing imaginary $F$ with $\#\Cl(F) = 16$), one could take $F = \QQ(\sqrt{-740})$, i.e. $(p_1^*,p_4^*,p_5^*) = (-4,+5,+37)$\begin{arxiv} (as $\tqchar{+5}{2} = \tqchar{+37}{2} = -1$ and $\tqchar{+37}{5} = -1$)\end{arxiv}.
\end{example}

\subsection{Five negative prime discriminants, \texorpdfstring{$4$-rank $0$}{4-rank 0}, discriminant not \texorpdfstring{$4\pmod{8}$}{4 mod 8}}
\label{SUBSEC:5-neg-4-rank-0-not-4-mod-8}

When $K$ has five negative prime discriminants $p_i^*$ all not equal to $-4$, and $d_4\Cl(K) = 0$, there are two open \Redei matrices, called $A$ (resp. (k)) and $B$ (resp. (l)) in \cite[p. 137, Section 8. Case 5]{Ben2015} (resp. \cite[p. 335]{Sueyoshi5neg}):
\[
A = \begin{bmatrix}- & 1 & 1 & 0 & 1 \\ 0 & - & 1 & 1 & 0 \\ 0 & 0 & - & 1 & 1 \\ 1 & 0 & 0 & - & 1 \\ 0 & 1 & 0 & 0 & -\end{bmatrix};
\quad
B = \begin{bmatrix}- & 1 & 1 & 0 & 0 \\ 0 & - & 1 & 1 & 0 \\ 0 & 0 & - & 1 & 1 \\ 1 & 0 & 0 & - & 1 \\ 1 & 1 & 0 & 0 & -\end{bmatrix}.
\]

\begin{remark}

We have made progress on $A$ (Theorem \ref{THM:5-neg-matrix-A}) but not $B$, which ``looks harder'' to us. Note that $B$ is a circulant matrix, so it is certainly harder to exploit any asymmetries.
\end{remark}

\begin{example}[Relatively small examples of $B$]
\label{EX:matrix-B-small-examples}
$R_K = B$, for instance, when the prime discriminant tuple $(p_i^*)_{i\in\ZZ/5\ZZ}$ equals
\begin{itemize}
\item $(-3,-8,-23,-7,-19)$;
\item $(-3,-11,-8,-7,-31)$; or
\item $(-3,-11,-q,-7,-31)$, where $q$ denotes a prime $107\pmod{3\cdot11\cdot7\cdot31\cdot4}$, e.g. $q = 107 + n\cdot 3\cdot11\cdot7\cdot31\cdot4$ for $n=0,1,3,9,10,21,23,33,34,\ldots$.
\end{itemize}
Also, if $F \colonequals \sqrt{(-31)(-3)(-11)} = \sqrt{-1023}$ or $F \colonequals \sqrt{(-7)(-19)(-3)} = \sqrt{-399}$ accordingly (in the spirit of Proposition \ref{PROP:Schmithals-2-class-field-splitting-idea}), then $\card{\Cl(F)} = 16$ (so $\Cl(F) \simeq C_2\oplus C_8$) according to \url{http://oeis.org/A046013}, which may be helpful for computations.
\end{example}

\begin{theorem}[Progress on matrix $A$]
\label{THM:5-neg-matrix-A}

Suppose $R_K = A$, and set $F \colonequals \QQ(\sqrt{p_3^* p_4^* p_5^*})$ (imaginary quadratic field, so $\Cl(F) = \Cl^+(F)$). Then $a_{3j}+a_{4j}+a_{5j} = 1$ (in $\FF_2$) for $j=1,2$, i.e. $p_1,p_2$ are inert in $F/\QQ$.

\Redei--Reichardt says $d_4 \Cl^+_2(F) = (3-1) - 1 = 1$ (since $R_F = \left[\begin{smallmatrix}- & 1 & 1 \\ 0 & - & 1 \\ 0 & 0 & -\end{smallmatrix}\right] = \left[\begin{smallmatrix}0 & 1 & 1 \\ 0 & 1 & 1 \\ 0 & 0 & 0\end{smallmatrix}\right]$), so by Gauss' genus theory, $\Cl_2(F) \simeq C_2\oplus C_{2^n}$ for some $n\ge2$. By Lemma \ref{LEM:F=Q(p3*p4*p5*)-imaginary}\eqref{SUBLEM:F=Q(p3*p4*p5*)-imaginary:16,2-inert}, $K$ has infinite $2$-tower if $\card{\Cl_2(F)} \ge 16$, i.e. $n\ge3$, holds.

\end{theorem}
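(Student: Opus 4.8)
The plan is to instantiate Proposition \ref{PROP:Schmithals-2-class-field-splitting-idea} with the imaginary quadratic field $F = \QQ(\sqrt{p_3^* p_4^* p_5^*})$ and then reduce everything to a finite $\FF_2$-linear-algebra check on the matrix $A$, culminating in an application of Lemma \ref{LEM:F=Q(p3*p4*p5*)-imaginary}\eqref{SUBLEM:F=Q(p3*p4*p5*)-imaginary:16,2-inert}.

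First I would confirm the hypotheses of that lemma. Since the product $p_3^* p_4^* p_5^*$ of three negative prime discriminants is itself negative, $F$ is imaginary. Because $\sqrt{p_3^* p_4^* p_5^*} = \sqrt{p_3^*}\,\sqrt{p_4^*}\,\sqrt{p_5^*}$ lies in the genus field $\QQ(\sqrt{p_1^*},\dots,\sqrt{p_5^*})$ of $K$---an everywhere-unramified elementary abelian $2$-extension of $K$ (Section \ref{SEC:genus-theory}), hence contained in $K_{(2)}^1 \le K_{(2)}^\infty$---we obtain $F \le K_{(2)}^\infty$; moreover $F$ is ramified only at $p_3, p_4, p_5$, so it is unramified at the $m = 2$ primes $p_1, p_2$, which play the role of $\ell_1, \ell_2$.

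Next I would read the inertness of $p_1, p_2$ straight off $A$. An unramified prime $p_j$ (for $j = 1, 2$) is inert in $F$ exactly when $\tqchar{p_3^*}{p_j}\tqchar{p_4^*}{p_j}\tqchar{p_5^*}{p_j} = -1$, i.e.\ $a_{3j} + a_{4j} + a_{5j} = 1$ in $\FF_2$; the relevant partial column sums of $A$ over rows $3, 4, 5$ give $0 + 1 + 0 = 1$ for $j = 1$ and $0 + 0 + 1 = 1$ for $j = 2$. Then I would compute $R_F$: its off-diagonal entries are intrinsic Kronecker symbols and so coincide with the $\{3,4,5\}$-block of $A$, while its diagonal is fixed by the three-variable column-sum-zero relation, producing the rank-$1$ matrix displayed in the statement. \Redei--Reichardt together with Gauss' genus theory (with $F$ imaginary, so $d_2\Cl(F) = 2$ and $d_4\Cl(F) = 1$) then pin down $\Cl_2(F) \simeq C_2 \oplus C_{2^n}$ with $n \ge 2$, and Lemma \ref{LEM:F=Q(p3*p4*p5*)-imaginary}\eqref{SUBLEM:F=Q(p3*p4*p5*)-imaginary:16,2-inert} finishes the argument: with $p_1, p_2$ both inert, the condition $\card{\Cl_2(F)} = 2^{n+1} \ge 16$ (that is, $n \ge 3$) forces an infinite $2$-tower for $K$.

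Every step here is a routine verification, so there is no real obstacle internal to the argument; the one input beyond $\FF_2$-linear algebra is the standard genus-field containment $F \le K_{(2)}^\infty$. The genuine limitation is external: the borderline case $n = 2$, where $\Cl_2(F) \simeq C_2 \oplus C_4$ has order only $8 < 16$, lies just outside the reach of the Golod--Shafarevich input and remains open, matching the ``near miss'' phenomenon flagged elsewhere in the paper.
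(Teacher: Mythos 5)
Your proposal is correct and follows essentially the same route as the paper: the theorem's proof is exactly the verification you carry out---reading the inertness of $p_1,p_2$ off the partial column sums $a_{3j}+a_{4j}+a_{5j}=1$ of $A$, computing the rank-$1$ matrix $R_F$ from the $\{3,4,5\}$-block, combining \Redei--Reichardt with Gauss' genus theory to get $\Cl_2(F)\simeq C_2\oplus C_{2^n}$ with $n\ge2$, and invoking Lemma \ref{LEM:F=Q(p3*p4*p5*)-imaginary}\eqref{SUBLEM:F=Q(p3*p4*p5*)-imaginary:16,2-inert}. Your explicit justification of the containment $F\le K_{(2)}^\infty$ via the genus field $\QQ(\sqrt{p_1^*},\ldots,\sqrt{p_5^*})$ merely spells out a step the paper leaves implicit in the setup of Lemma \ref{LEM:F=Q(p3*p4*p5*)-imaginary}, and introduces no new method.
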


\begin{remark}
In the remaining case $\Cl_2(F) \simeq C_2\oplus C_4$ (for matrix $A$), it is plausible that group-theoretic methods could give helpful additional structure for the $2$-tower of $K$.
\end{remark}

\begin{example}[Examples with $R_K = A$]
\label{EX:matrix-A-examples}

Start with any three negative prime discriminants $p_3^*,p_4^*,p_5^*\ne -4$ such that $\card{\Cl_2(F)} \ge 8$, or equivalently $\rank_{\FF_2} R_F \le 1$, i.e. $R_F = \left[\begin{smallmatrix}- & 1 & 1 \\ 0 & - & 1 \\ 0 & 0 & -\end{smallmatrix}\right]$ up to re-indexing. It is then standard to find primes $p_1,p_2\equiv3\pmod4$ such that $p_1^*,p_2^*\equiv1\pmod{4}$ satisfy the correct Kronecker symbols with respect to each other and with respect to $p_3,p_4,p_5$.

For instance, based on \url{http://oeis.org/A046013} (listing imaginary $F$ with $\#\Cl(F) = 16$), one could take $F = \QQ(\sqrt{-399})$ (i.e. $(p_3^*,p_4^*,p_5^*) = (-7,-19,-3)$\begin{arxiv}, as $\tqchar{-19}{7} = +1$, $\tqchar{-3}{7} = +1$, and $\tqchar{-19}{3} =-1$\end{arxiv}) or $F = \QQ(\sqrt{-1023})$ (i.e. $(p_3^*,p_4^*,p_5^*) = (-31,-3,-11)$).
\end{example}

\subsection{Five negative prime discriminants, \texorpdfstring{$4$-rank $0$}{4-rank 0}, discriminant \texorpdfstring{$4\pmod{8}$}{4 mod 8}}
\label{SUBSEC:5-neg-4-rank-0-is-4-mod-8}

When $K$ has five negative prime discriminants $p_i^*$ with $p_1^* = -4$ (up to re-indexing), and $d_4\Cl(K) = 0$, there are three open \Redei matrices, called $C$ (resp. (o)) and $D$ (resp. (p)) in \cite[p. 138, Section 8. Case 5]{Ben2015} (resp. \cite[p. 336]{Sueyoshi5neg}), with (following Benjamin) $D$ split into two cases:
\[
C = \begin{bmatrix}- & 1 & 1 & 1 & 1 \\ a_{21} & - & 1 & 0 & 1 \\ a_{31} & 0 & - & 1 & 1 \\ a_{41} & 1 & 0 & - & 1 \\ 1 & 0 & 0 & 0 & -\end{bmatrix};
\quad
D_1 = \begin{bmatrix}- & 1 & 1 & 1 & 1 \\ a_{21} & - & 1 & 1 & 0 \\ 1 & 0 & - & 1 & 1 \\ 0 & 0 & 0 & - & 1 \\ 1 & 1 & 0 & 0 & -\end{bmatrix};
\quad
D_2 = \begin{bmatrix}- & 1 & 1 & 1 & 1 \\ a_{21} & - & 1 & 1 & 0 \\ 0 & 0 & - & 1 & 1 \\ 1 & 0 & 0 & - & 1 \\ a_{51} & 1 & 0 & 0 & -\end{bmatrix}.
\]

\begin{remark}
Benjamin has ambiguous typos in his $D_1,D_2$, with $a_{31} = a_{41}$ (but see ``(p) [$d_4\Cl(K) = 1$] if and only if $a_{31} = a_{41}$'' from Sueyoshi), so our correction \emph{arbitrarily} associates $D_1$ with $(a_{31},a_{41}) = (1,0)$ and $D_2$ with $(a_{31},a_{41}) = (0,1)$. We thank the anonymous referee for observing that Sueyoshi \cite[p. 337]{Sueyoshi5neg} has resolved the case $(a_{31},a_{41},a_{51}) = (1,0,0)$ in matrix $D$ (or (p) in Sueyoshi's terminology).
\end{remark}

\begin{remark}
We have made progress on $C$ (Theorem \ref{THM:5-neg-matrix-C}) and $D_1$ (Theorem \ref{THM:5-neg-matrix-D1}) but not $D_2$, which ``looks hardest'' to us. In some sense $C$ and $D_1$ closely resemble $A$ from Section \ref{SUBSEC:5-neg-4-rank-0-not-4-mod-8}, while $D_2$ closely resembles $B$.
\end{remark}

\begin{theorem}[Progress on matrix $C$]
\label{THM:5-neg-matrix-C}

Suppose $R_K = C$ (in particular, $p_1^* = -4$).

\begin{enumerate}

\item If $a_{31} + a_{41} = 0$, and we set $F \colonequals \QQ(\sqrt{p_3^* p_4^* p_5^*})$, then $p_1,p_2$ are inert in $F/\QQ$, and $R_F = \left[\begin{smallmatrix}- & 1 & 1 \\ 0 & - & 1 \\ 0 & 0 & -\end{smallmatrix}\right] = \left[\begin{smallmatrix}0 & 1 & 1 \\ 0 & 1 & 1 \\ 0 & 0 & 0\end{smallmatrix}\right]$ has rank $1$. 

\item If $a_{21} + a_{41} = 0$, and we set $F \colonequals \QQ(\sqrt{p_2^* p_4^* p_5^*})$, then $p_1,p_3$ are inert in $F/\QQ$, and $R_F = \left[\begin{smallmatrix}- & 0 & 1 \\ 1 & - & 1 \\ 0 & 0 & -\end{smallmatrix}\right] = \left[\begin{smallmatrix}1 & 0 & 1 \\ 1 & 0 & 1 \\ 0 & 0 & 0\end{smallmatrix}\right]$ has rank $1$.

\item If $a_{21} + a_{31} = 0$, and we set $F \colonequals \QQ(\sqrt{p_2^* p_3^* p_5^*})$, then $p_1,p_4$ are inert in $F/\QQ$, and $R_F = \left[\begin{smallmatrix}- & 1 & 1 \\ 0 & - & 1 \\ 0 & 0 & -\end{smallmatrix}\right] = \left[\begin{smallmatrix}0 & 1 & 1 \\ 0 & 1 & 1 \\ 0 & 0 & 0\end{smallmatrix}\right]$ has rank $1$.
\end{enumerate}
These three cases are not mutually exclusive, but for any $K$ with $R_K = C$, at least one will apply, since it is impossible to have $a_{u1}+a_{v1} = 1$ in $\FF_2$ for all pairs $u,v\in\{2,3,4\}$.

In each case, $F$ is imaginary, so $\Cl(F) = \Cl^+(F)$ has $4$-rank $(3-1) - 1 = 1$ by \Redei--Reichardt, and thus $\Cl_2(F) \simeq C_2\oplus C_{2^n}$ for some $n\ge2$. By Lemma \ref{LEM:F=Q(p3*p4*p5*)-imaginary}\eqref{SUBLEM:F=Q(p3*p4*p5*)-imaginary:16,2-inert}, $K$ has infinite $2$-tower if $\card{\Cl_2(F)} \ge 16$, i.e. $n\ge3$, holds in any of the three cases applying to $K$.
\end{theorem}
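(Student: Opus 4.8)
The plan is to reduce the entire statement to reading off entries of the displayed matrix $C$, using the two elementary facts attached to the definition of the \Redei matrix. First, $a_{ij} = 0$ exactly when $p_j$ splits in $\QQ(\sqrt{p_i^*})$; second, by multiplicativity of the Kronecker symbol in the top argument, a prime $p_j$ not dividing the radicand is \emph{inert} in $\QQ(\sqrt{\prod_i p_i^*})$ precisely when $\prod_i \tqchar{p_i^*}{p_j} = -1$, i.e. when $\sum_i a_{ij} = 1$ in $\FF_2$ (valid even for $p_j = 2$, which matters here since $p_1^* = -4$ forces $p_1 = 2$). I would first record that in all three cases the radicand is a product of three of the \emph{odd} negative prime discriminants among $p_2^*,\ldots,p_5^*$ (the index $1$ never appears), so $F$ is a genuine imaginary quadratic field whose discriminant equals that product of pairwise coprime prime discriminants; in particular $\Cl(F) = \Cl^+(F)$.

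Next I would treat the three cases in turn, each being a finite verification. In each case the off-diagonal entries of $R_F$ are precisely the off-diagonal entries $a_{ij}$ of $C$ with $i,j$ ranging over the three chosen indices, and crucially these are all \emph{fixed} in $C$ (independent of the free parameters $a_{21},a_{31},a_{41}$); filling in the diagonal by the row-sum-zero rule then reproduces exactly the three stated $3\times 3$ matrices, each visibly of rank $1$ (two equal nonzero rows and a zero row). The case hypothesis enters only through the inertness of $p_1$: the relevant column sum is $a_{31}+a_{41}+a_{51}$, $a_{21}+a_{41}+a_{51}$, or $a_{21}+a_{31}+a_{51}$ respectively, and since the fixed entry $a_{51} = 1$, each equals the case sum plus $1$; the hypothesis makes the case sum vanish, leaving $1$, so $p_1$ is inert. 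The second ``outside'' prime's column sum (for instance $a_{32}+a_{42}+a_{52} = 0+1+0$ in Case 1) involves only fixed entries and is checked to equal $1$ directly.

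For exhaustiveness I would invoke pigeonhole in $\FF_2$: among the three bits $a_{21},a_{31},a_{41}$ two must coincide, so one of $a_{31}+a_{41}$, $a_{21}+a_{41}$, $a_{21}+a_{31}$ vanishes — equivalently, these cannot all be nonzero, which is exactly the stated impossibility of $a_{u1}+a_{v1}=1$ for every pair $u,v\in\{2,3,4\}$. Finally, with $F$ imaginary and $\rank_{\FF_2} R_F = 1$, \Redei--Reichardt gives $d_4\Cl_2(F) = (3-1)-1 = 1$ while Gauss' genus theory gives $d_2\Cl_2(F) = 2$, forcing $\Cl_2(F) \simeq C_2\oplus C_{2^n}$ with $n\ge2$; since the two outside primes were verified to be inert, Lemma \ref{LEM:F=Q(p3*p4*p5*)-imaginary}\eqref{SUBLEM:F=Q(p3*p4*p5*)-imaginary:16,2-inert} applies and yields the infinite $2$-tower once $\card{\Cl_2(F)} \ge 16$. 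There is no analytic obstacle: the whole argument is the dictionary between $C$ and the pair $(F, R_F)$, and the only real pitfall is transcription — correctly matching which fixed entries of $C$ feed into each $R_F$ and each inertness column-sum, and not forgetting the $p_1 = 2$ caveat in the splitting law.
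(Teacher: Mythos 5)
Your proposal is correct and follows essentially the same route as the paper, whose justification is embedded in the theorem statement itself: read off the fixed entries of $C$ to get each $R_F$, verify inertness of the two outside primes via the Kronecker-symbol column sums (valid at $p_1 = 2$ by the Kronecker convention), use the $\FF_2$ pigeonhole for exhaustiveness, and combine \Redei--Reichardt with genus theory to get $\Cl_2(F) \simeq C_2 \oplus C_{2^n}$, $n \ge 2$, before invoking Lemma \ref{LEM:F=Q(p3*p4*p5*)-imaginary}\eqref{SUBLEM:F=Q(p3*p4*p5*)-imaginary:16,2-inert}. Your only cosmetic deviation is filling in diagonals by the row-sum-zero rule rather than the paper's column-sum rule, which is harmless since both vanish for \Redei matrices of imaginary quadratic fields and agree with the displayed matrices.
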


\begin{example}[Examples with $R_K = C$]
\label{EX:matrix-C-examples}

Start with any three negative prime discriminants $p_3^*,p_4^*,p_5^*\equiv-3\pmod{8}$ such that $\card{\Cl_2(F)} \ge 8$, or equivalently $\rank_{\FF_2} R_F \le 1$, i.e. $R_F = \left[\begin{smallmatrix}- & 1 & 1 \\ 0 & - & 1 \\ 0 & 0 & -\end{smallmatrix}\right]$ up to re-indexing; then $p_1^*=-4,p_3^*,p_4^*,p_5^*$ satisfy permitted \Redei sub-matrix (specifically, with $a_{31} = a_{41} = 1$, which lies in the first case of Theorem \ref{THM:5-neg-matrix-C}). It is then standard to find a prime $p_2\equiv3\pmod4$ such that $p_2^*\equiv1\pmod{4}$ satisfies the correct Kronecker symbols with respect to $p_1=2,p_3,p_4,p_5$.

For instance, based on \url{http://mathworld.wolfram.com/ClassNumber.html} (listing imaginary $F$ with $\#\Cl(F) = 16$), one could take $F = \QQ(\sqrt{-2211})$ (i.e. $(p_3^*,p_4^*,p_5^*) = (-67,-3,-11)$\begin{arxiv}, as $\tqchar{-67}{3} = -1$, $\tqchar{-67}{11} = -1$, and $\tqchar{-11}{3} =+1$\end{arxiv}). We thank Ian Whitehead for providing this example.
\end{example}

\begin{theorem}[Progress on matrix $D_1$]
\label{THM:5-neg-matrix-D1}

Suppose $R_K = D_1$ (in particular, $p_1^* = -4$), and set $F \colonequals \QQ(\sqrt{p_1^* p_3^* p_4^*})$ (imaginary quadratic field, so $\Cl(F) = \Cl^+(F)$). Then $p_2,p_5$ are inert in $F/\QQ$.

\Redei--Reichardt says $d_4 \Cl^+_2(F) = (3-1) - 1 = 1$ (since $R_F = \left[\begin{smallmatrix}- & 1 & 1 \\ 1 & - & 1 \\ 0 & 0 & -\end{smallmatrix}\right] = \left[\begin{smallmatrix}1 & 1 & 1 \\ 1 & 1 & 1 \\ 0 & 0 & 0\end{smallmatrix}\right]$), so $\Cl_2(F) \simeq C_2\oplus C_{2^n}$ for some $n\ge2$. By Lemma \ref{LEM:F=Q(p3*p4*p5*)-imaginary}\eqref{SUBLEM:F=Q(p3*p4*p5*)-imaginary:16,2-inert}, $K$ has infinite $2$-tower if $\card{\Cl_2(F)} \ge 16$, i.e. $n\ge3$, holds.
\end{theorem}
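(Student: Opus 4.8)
The plan is to follow the same template as the proofs of Theorems \ref{THM:5-neg-matrix-A} and \ref{THM:5-neg-matrix-C}: read the needed splitting data directly off the given matrix $D_1$, identify the $3\times 3$ \Redei matrix $R_F$ of the chosen subfield $F = \QQ(\sqrt{p_1^* p_3^* p_4^*})$, and then feed the resulting class-group structure into Lemma \ref{LEM:F=Q(p3*p4*p5*)-imaginary}\eqref{SUBLEM:F=Q(p3*p4*p5*)-imaginary:16,2-inert}. The essential (and only real) idea is the choice of the index triple $\{1,3,4\}$ for $F$, which is precisely the triple whose associated \Redei matrix degenerates to rank $1$.

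First I would verify that $p_2$ and $p_5$ are inert in $F/\QQ$. Since $F = \QQ(\sqrt{p_1^* p_3^* p_4^*})$, the splitting criterion (valid even at the even prime $p_1 = 2$) says that a rational prime $p_j$ is inert in $F$ exactly when $\tqchar{p_1^*}{p_j}\tqchar{p_3^*}{p_j}\tqchar{p_4^*}{p_j} = -1$, i.e.\ when $a_{1j} + a_{3j} + a_{4j} = 1$ in $\FF_2$. Reading off columns $j = 2$ and $j = 5$ of $D_1$ gives $a_{12}+a_{32}+a_{42} = 1+0+0 = 1$ and $a_{15}+a_{35}+a_{45} = 1+1+1 = 1$ (neither of which involves the unspecified entry $a_{21}$), so both $p_2$ and $p_5$ are inert, as claimed.

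Next I would compute $R_F$. Its off-diagonal entries are Kronecker symbols $\tqchar{p_i^*}{p_j}$ depending only on the pair $(p_i,p_j)$, hence they coincide with the corresponding off-diagonal entries of $R_K$ restricted to the indices $\{1,3,4\}$; the diagonal is then recovered from the column-sum-zero relation for the prime discriminants $p_1^*, p_3^*, p_4^*$ of $F$. This yields the stated $R_F = \left[\begin{smallmatrix}1&1&1\\1&1&1\\0&0&0\end{smallmatrix}\right]$, of rank $1$. By \Redei--Reichardt, $d_4\Cl^+(F) = (3-1)-1 = 1$, and since $F$ is imaginary (as $p_1^* p_3^* p_4^* < 0$, all three factors being negative) Gauss' genus theory gives $\Cl_2(F) = \Cl^+_2(F) \simeq C_2 \oplus C_{2^n}$ with $n \ge 2$.

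Finally I would invoke Lemma \ref{LEM:F=Q(p3*p4*p5*)-imaginary}\eqref{SUBLEM:F=Q(p3*p4*p5*)-imaginary:16,2-inert} with the role of the triple $(\ell_3,\ell_4,\ell_5)$ played by $(p_1,p_3,p_4)$ and that of $(\ell_1,\ell_2)$ by $(p_2,p_5)$: having shown $p_2,p_5$ both inert in $F$, the additional hypothesis $\card{\Cl_2(F)} \ge 16$ (i.e.\ $n \ge 3$) forces $K$ to have infinite $2$-tower. I do not expect a genuine obstacle, since the argument is a verification; the points to watch are the two bookkeeping conventions (recovering the diagonal of $R_F$ by column sums rather than row sums, and applying reciprocity at $p_1 = 2$) together with the relabeling needed to match the lemma's hypotheses. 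One should also note that $F \le K_{(2)}^\infty$ via the containment of $KF$ in the genus field of $K$, which is what makes the lemma applicable in the first place.
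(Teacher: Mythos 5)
Your proposal is correct and takes essentially the same approach as the paper, whose argument is embedded in the statement of Theorem \ref{THM:5-neg-matrix-D1} itself: read the inertness of $p_2,p_5$ off columns $2$ and $5$ of $D_1$ (which, as you note, avoid the unspecified entry $a_{21}$), identify $R_F$ as the rank-$1$ matrix on indices $\{1,3,4\}$ with diagonal recovered from the column-sum-zero convention, and combine \Redei--Reichardt and genus theory with Lemma \ref{LEM:F=Q(p3*p4*p5*)-imaginary}\eqref{SUBLEM:F=Q(p3*p4*p5*)-imaginary:16,2-inert}. Your closing observation that $F \le K_{(2)}^\infty$ via the genus field is exactly the justification the paper supplies once in Section \ref{SUBSEC:concrete-new-lemmas} when introducing this choice of $F$.
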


\begin{example}[An infinite family with $R_K = D_1$]
\label{EX:matrix-D1-examples}

Fix $n\ge2$. Lopez \cite[Proposition 1.1]{Lopez} proves that any imaginary quadratic field $E = \QQ(\sqrt{(-4)(-q_3)(-q_4)})$ with $q_3\equiv11\pmod{24}$, $q_4\equiv 7\pmod{24}$, and $q_3+q_4 = 2(3m^2)^{2^{n-1}}$ for some odd integer $m$ has $2$-class group exactly $\Cl_2(F) \simeq C_2\oplus C_{2^n}$, and also that there are infinitely many such quadratic fields $E$ \cite[Theorem 1.3]{Lopez}. For such $E$ we easily check $R_E = \left[\begin{smallmatrix}- & 1 & 1 \\ 1 & - & 1 \\ 0 & 0 & -\end{smallmatrix}\right]$, so we may find $K$ with $(p_1^*,p_3^*,p_4^*) = (-4,-q_3,-q_4)$ (here $F = E$).

\begin{arxiv}Alternatively, based on \url{http://oeis.org/A046013} (listing imaginary $F$ with $\#\Cl(F) = 16$), one could take $F = \QQ(\sqrt{-1876})$ (i.e. $(p_1^*,p_3^*,p_4^*) = (-4,-67,-7)$, as $\tqchar{-7}{2} = +1$, $\tqchar{-67}{2} = -1$, and $\tqchar{-67}{7} = \tqchar{+3}{7} = -1$) or $F = \QQ(\sqrt{-2004})$ (i.e. $(p_1^*,p_3^*,p_4^*) = (-4,-3,-167)$).\end{arxiv}

It is then standard to find primes $p_2,p_5\equiv 3\pmod{4}$ such that $p_2^*,p_5^*\equiv1\pmod{4}$ satisfy the correct Kronecker symbols with respect to each other and $p_1,p_3,p_4$.
\end{example}

\subsection{Three negative prime discriminants, \texorpdfstring{$4$-rank $0$}{4-rank 0}, discriminant not \texorpdfstring{$4\pmod{8}$}{4 mod 8}}
\label{SUBSEC:3-neg-4-rank-0-not-4-mod-8}

When $K$ has five prime discriminants $p_i^*$ all not equal to $-4$, exactly three negative prime discriminants (say $p_1^*,p_2^*,p_3^* < 0 < p_4^*,p_5^*$), and $d_4\Cl(K) = 0$, there are seven open \Redei matrices, numbered 16, 28, 30, 32, 34, 49 (from \cite[p. 140, Section 9. Case 6]{Ben2015} or \cite[pp. 179--180]{Sueyoshi3neg}), with (following Benjamin) 34 split into two cases:
\begin{align*}
\#16 = \left[\begin{array}{ccc|cc}
- & 1 & 1 & 1 & 0 \\
0 & - & 1 & 0 & 1 \\
0 & 0 & - & 1 & 0 \\
\hline
1 & 0 & 1 & - & 1 \\
0 & 1 & 0 & 1 & -
\end{array}\right];
\quad
\#28 = \left[\begin{array}{ccc|cc}
- & 1 & 1 & 0 & 1 \\
0 & - & 1 & 1 & 1 \\
0 & 0 & - & 0 & 1 \\
\hline
0 & 1 & 0 & - & 1 \\
1 & 1 & 1 & 1 & -
\end{array}\right];
\quad
\#32 = \left[\begin{array}{ccc|cc}
- & 1 & 1 & 1 & 1 \\
0 & - & 1 & 0 & 1 \\
0 & 0 & - & 1 & 0 \\
\hline
1 & 0 & 1 & - & 1 \\
1 & 1 & 0 & 1 & -
\end{array}\right];
\displaybreak[0] \\
\#34a = \left[\begin{array}{ccc|cc}
- & 1 & 1 & 1 & 1 \\
0 & - & 1 & 0 & 1 \\
0 & 0 & - & 1 & 1 \\
\hline
1 & 0 & 1 & - & 0 \\
1 & 1 & 1 & 0 & -
\end{array}\right];
\quad
\#49 = \left[\begin{array}{ccc|cc}
- & 1 & 0 & 0 & 1 \\
0 & - & 1 & 1 & 1 \\
1 & 0 & - & 1 & 1 \\
\hline
0 & 1 & 1 & - & 0 \\
1 & 1 & 1 & 0 & -
\end{array}\right];
\quad
\#30 = \left[\begin{array}{ccc|cc}
- & 1 & 1 & 0 & 1 \\
0 & - & 1 & 1 & 0 \\
0 & 0 & - & 1 & 1 \\
\hline
0 & 1 & 1 & - & 1 \\
1 & 0 & 1 & 1 & -
\end{array}\right];
\displaybreak[0] \\
\#34b = \left[\begin{array}{ccc|cc}
- & 1 & 1 & 1 & 1 \\
0 & - & 1 & 0 & 1 \\
0 & 0 & - & 1 & 1 \\
\hline
1 & 0 & 1 & - & 1 \\
1 & 1 & 1 & 1 & -
\end{array}\right].
\end{align*}

\begin{remark}
Benjamin has a single typo in matrix 16 ($a_{15}$ should be $0$, not $1$) and several typos in matrix 32.
\end{remark}

\begin{remark}
We have made progress on matrices 16 and 28 (Theorems \ref{THM:3-neg-matrix-16,28} and \ref{THM:3-neg-matrix-28-alternative-progress}), matrix 32 (Theorem \ref{THM:3-neg-matrix-32}), and matrices 34a and 49 (Theorem \ref{THM:3-neg-matrix-34a,49}), but not matrices 30 and 34b.
\end{remark}




\begin{theorem}[Progress on matrices 34a and 49]
\label{THM:3-neg-matrix-34a,49}

Suppose $R_K\in \{\#34a,\#49\}$, and set $F \colonequals \QQ(\sqrt{p_4^* p_5^*})$ (real quadratic field). Then $p_2$ is inert in $F/\QQ$ if $R_K = \#34a$, and $p_1$ is inert in $F/\QQ$ if $R_K = \#49$.

\Redei--Reichardt says $d_4 \Cl^+_2(F) = (2-1) - 0 = 1$ (since $R_F = \left[\begin{smallmatrix}- & 0 \\ 0 & -\end{smallmatrix}\right] = \left[\begin{smallmatrix}0 & 0 \\ 0 & 0\end{smallmatrix}\right]$), so $\Cl^+_2(F) \simeq C_{2^n}$ for some $n\ge2$ (so $\Cl_2(F)$ is cyclic of order either $2^n$ or $2^{n-1}$). By Lemma \ref{LEM:F=Q(p4*p5*)-real-positive-p4*,p5*}\eqref{SUBLEM:F=Q(p4*p5*)-real-positive-p4*,p5*:8,1-inert}, $K$ has infinite $2$-tower if $\card{\Cl_2(F)} \ge 8$ holds.
\end{theorem}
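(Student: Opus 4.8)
The plan is to reduce the whole statement to the real-quadratic criterion of Lemma~\ref{LEM:F=Q(p4*p5*)-real-positive-p4*,p5*}\eqref{SUBLEM:F=Q(p4*p5*)-real-positive-p4*,p5*:8,1-inert}, whose applicability requires only that $p_4^*,p_5^*>0$ (which holds by the standing assumptions of this subsection), that $\card{\Cl_2(F)}\ge 8$, and that at least one of $p_1,p_2,p_3$ is inert in $F\colonequals\QQ(\sqrt{p_4^*p_5^*})$. Since $F=\QQ(\sqrt{p_4^*p_5^*})$ is a subfield of the genus field of $K$, we have $F\le K_{(2)}^\infty$, so Proposition~\ref{PROP:Schmithals-2-class-field-splitting-idea} and hence the lemma legitimately apply. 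The only real work is to verify the inert condition and to determine the cyclic structure of $\Cl_2(F)$; both come straight out of $R_K$.

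First I would record the inert criterion. For a rational prime $p_j\nmid p_4p_5$ the multiplicativity of the Kronecker symbol gives $\tqchar{p_4^*p_5^*}{p_j}=\tqchar{p_4^*}{p_j}\tqchar{p_5^*}{p_j}=(-1)^{a_{4j}+a_{5j}}$, so $p_j$ is inert in $F$ precisely when $a_{4j}+a_{5j}=1$ in $\FF_2$ (valid even when $p_j=2$, by the conventions of Section~\ref{SUBSEC:Redei-matrix-intro}). I would then simply inspect the appropriate column: for $R_K=\#34a$ the second column gives $(a_{42},a_{52})=(0,1)$, so $p_2$ is inert, and for $R_K=\#49$ the first column gives $(a_{41},a_{51})=(0,1)$, so $p_1$ is inert. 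In either case at least one of $p_1,p_2,p_3$ is inert in $F$.

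Next I would compute $R_F$, the $2\times2$ \Redei matrix of $F$, whose off-diagonal entries are exactly the $(4,5)$ and $(5,4)$ entries of $R_K$. Reading off both matrices gives $a_{45}=a_{54}=0$ in each case, and because the rows of a \Redei matrix sum to $0$ the diagonal of $R_F$ vanishes too; thus $R_F=0$ and $\rank_{\FF_2}R_F=0$. Since $F$ has $t=2$ finite ramified primes, Gauss' genus theory gives $d_2\Cl^+(F)=1$ and \Redei--Reichardt gives $d_4\Cl^+(F)=t-1-\rank_{\FF_2}R_F=1$; a finite abelian $2$-group of $2$-rank $1$ and $4$-rank $1$ is cyclic of order at least $4$, so $\Cl^+_2(F)\simeq C_{2^n}$ with $n\ge2$. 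Passing to the wide class group quotients by $[P_F:P_F^+]\in\{1,2\}$ inside the $2$-part, so $\Cl_2(F)$ is cyclic of order $2^n$ or $2^{n-1}$. With an inert prime in hand, invoking Lemma~\ref{LEM:F=Q(p4*p5*)-real-positive-p4*,p5*}\eqref{SUBLEM:F=Q(p4*p5*)-real-positive-p4*,p5*:8,1-inert} under the hypothesis $\card{\Cl_2(F)}\ge 8$ then yields the infinite $2$-tower of $K$.

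Because the argument is a verification against the pre-established lemma, there is no deep obstacle; the single point demanding care is the narrow-to-wide bookkeeping --- one must confirm that $\Cl^+_2(F)$ really is cyclic (so the quotient by the order-$\le 2$ subgroup $P_F/P_F^+$ stays cyclic and drops the order by at most a factor of $2$) and that the column read off from $R_K$ is taken against the intended indexing $p_4,p_5$ of $F$. I would also stress that the conclusion is only conditional on the size bound $\card{\Cl_2(F)}\ge 8$: the genuinely unresolved residue is the ``near miss'' regime $\card{\Cl_2(F)}\in\{2,4\}$, which the Golod--Shafarevich input underlying the lemma cannot reach.
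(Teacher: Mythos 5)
Your proposal is correct and takes essentially the same approach as the paper, which proves this theorem inline within its statement: reading off $a_{4j}+a_{5j}=1$ in column $2$ of $\#34a$ (resp.\ column $1$ of $\#49$) for the inertness claim, computing $R_F=0$ so that genus theory plus R\'edei--Reichardt give $\Cl^+_2(F)\simeq C_{2^n}$ with $n\ge2$, and then invoking Lemma \ref{LEM:F=Q(p4*p5*)-real-positive-p4*,p5*}\eqref{SUBLEM:F=Q(p4*p5*)-real-positive-p4*,p5*:8,1-inert}. Your additional bookkeeping --- the genus-field containment $F\le K_{(2)}^\infty$ justifying the application of Proposition \ref{PROP:Schmithals-2-class-field-splitting-idea}, and the narrow-to-wide passage showing $\Cl_2(F)$ is cyclic of order $2^n$ or $2^{n-1}$ --- is sound and matches what the paper leaves implicit.
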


\begin{example}[Examples for 34a and 49]
\label{EX:matrices-34a,49-examples}

Start with any two positive prime discriminants $p_4^*,p_5^*$ such that $\rank_{\FF_2} R_F = 0$, i.e. $R_F = \left[\begin{smallmatrix}- & 0 \\ 0 & -\end{smallmatrix}\right]$. It is then standard to find primes $p_1,p_2,p_3\equiv3\pmod4$ such that (the three negative prime discriminants) $p_1^*,p_2^*,p_3^*\equiv1\pmod{4}$ satisfy the correct Kronecker symbols with respect to each other and with respect to $p_4,p_5$.

For instance, based on \url{http://oeis.org/A081364} or \url{http://oeis.org/A218158} (to find suitable real $F$ with $\#\Cl(F) = 8$), one could take $F = \QQ(\sqrt{906})$ (i.e. $(p_4^*,p_5^*) = (+8,+113)$) or $F = \QQ(\sqrt{2605})$ (i.e. $(p_4^*,p_5^*) = (+5,+521)$).
\end{example}

\begin{example}[Cf. Remark \ref{RMK:Schmithals-priority-Mouhib-acknow}]

Schmithals \cite{Schmithals} (and independently, Hajir \cite[p. 17, last paragraph]{Hajir} and Mouhib \cite[Proposition 3.3]{Mouhib}, \cite{MouhibAcknowledgement}) used $F = \QQ(\sqrt{(+5)(+461)})$ (note $\qchar{+5}{461} = \qchar{+461}{5} = +1$) with $\#\Cl(F) = 16$ (in particular $\card{\Cl_2(F)} \ge 16$, a stronger assumption than $\card{\Cl_2(F)} \ge8$) to prove that $\QQ(\sqrt{(-11)(+5)(+461)}$ has infinite $2$-tower. As a corollary, cases 34a and 49 already had examples with proven infinite $2$-towers at the time of Benjamin's paper \cite{Ben2015}, but still 34a and 49 remain open in general.
\end{example}

\begin{theorem}[Progress on matrix 32]
\label{THM:3-neg-matrix-32}

Suppose $R_K = \#32$, and set $F \colonequals \QQ(\sqrt{p_3^* p_5^*})$ (imaginary quadratic field, so $\Cl^+(F) = \Cl(F)$). Then $p_1,p_2$ are inert in $F/\QQ$.

\Redei--Reichardt says $d_4 \Cl^+_2(F) = (2-1) - 0 = 1$ (since $R_F = \left[\begin{smallmatrix}- & 0 \\ 0 & -\end{smallmatrix}\right] = \left[\begin{smallmatrix}0 & 0 \\ 0 & 0\end{smallmatrix}\right]$), so $\Cl_2(F) \simeq C_{2^n}$ for some $n\ge2$. By Lemma \ref{LEM:F=Q(p4*p5*)-imaginary}\eqref{SUBLEM:F=Q(p4*p5*)-imaginary:16,2-inert}, $K$ has infinite $2$-tower if $\card{\Cl_2(F)} \ge 16$, i.e. $n\ge4$, holds.
\end{theorem}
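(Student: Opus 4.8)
The statement is essentially an instance of Lemma~\ref{LEM:F=Q(p4*p5*)-imaginary}\eqref{SUBLEM:F=Q(p4*p5*)-imaginary:16,2-inert} applied to $F = \QQ(\sqrt{p_3^* p_5^*})$, so the plan is to verify that lemma's hypotheses by reading off the \Redei matrix $\#32$. First I would check that $F$ is imaginary: in the present sub-case $K$ has exactly three negative prime discriminants, say $p_1^*, p_2^*, p_3^* < 0 < p_4^*, p_5^*$, so $p_3^* p_5^* < 0$. This simultaneously shows that the two discriminants $p_3^*, p_5^*$ generating $F$ have opposite sign, which is the standing hypothesis of the lemma.

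Next I would verify that $p_1, p_2$ are inert in $F/\QQ$. An unramified rational prime $p_j$ is inert in $F = \QQ(\sqrt{p_3^* p_5^*})$ precisely when $\tqchar{p_3^* p_5^*}{p_j} = -1$, i.e. when $a_{3j} + a_{5j} = 1$ in $\FF_2$, where the $a_{ij}$ are the entries of $R_K = \#32$. Reading off the first two columns, $(a_{31}, a_{51}) = (0,1)$ and $(a_{32}, a_{52}) = (0,1)$, so both sums equal $1$ and $p_1, p_2$ are inert. Hence among the three prime discriminants $p_1^*, p_2^*, p_4^*$ \emph{not} used to build $F$, at least two are inert, which is exactly the ``at least $2$ inert'' requirement of the lemma.

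Then I would pin down $\Cl_2(F)$. The \Redei matrix $R_F$ has off-diagonal entries $a_{35} = a_{53} = 0$ (read directly from $\#32$), and the column-sum-zero convention forces the diagonal entries to vanish as well; thus $R_F$ is the zero matrix, with $\rank_{\FF_2} R_F = 0$. \Redei--Reichardt then gives $d_4 \Cl^+(F) = (2-1) - 0 = 1$, and since $F$ is imaginary (so $\Cl_2(F) = \Cl^+_2(F)$) with $d_2 \Cl(F) = 1$, Gauss' genus theory forces $\Cl_2(F) \simeq C_{2^n}$ for some $n \ge 2$.

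Finally, all hypotheses being in place, Lemma~\ref{LEM:F=Q(p4*p5*)-imaginary}\eqref{SUBLEM:F=Q(p4*p5*)-imaginary:16,2-inert} produces an infinite $2$-tower for $K$ as soon as $\card{\Cl_2(F)} \ge 16$, i.e. $n \ge 4$. I expect no genuine obstacle here: the substantive input---Golod--Shafarevich together with the relative genus-theory splitting count---is already bundled into the cited lemma, so what remains is bookkeeping with the Kronecker-symbol/matrix-entry dictionary. The only points demanding care are matching the theorem's $p$-indexing to the lemma's $\ell$-indexing and correctly recovering the diagonal of $R_F$ from the column-sum convention. The honest limitation (not an obstacle) is that the threshold $n \ge 4$ leaves the ``near miss'' cases $\Cl_2(F) \simeq C_4$ and $C_8$ (i.e. $n \in \{2,3\}$) open.
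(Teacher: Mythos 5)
Your verification is correct and follows the paper's own route exactly: you read off $a_{31}+a_{51} = a_{32}+a_{52} = 1$ from $R_K = \#32$ to get $p_1,p_2$ inert, recover $R_F = 0$ via the column-sum convention, apply \Redei--Reichardt and genus theory to get $\Cl_2(F)\simeq C_{2^n}$ with $n\ge2$, and invoke Lemma \ref{LEM:F=Q(p4*p5*)-imaginary}\eqref{SUBLEM:F=Q(p4*p5*)-imaginary:16,2-inert}, which is precisely the argument bundled into the theorem's statement in the paper. Your closing observation that $n\in\{2,3\}$ remains open also matches the paper's ``near miss'' remarks.
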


\begin{example}[An infinite family with $R_K = \#32$]
\label{EX:matrix-32-examples}

Fix $n\ge1$. Dominguez--Miller--Wong \cite{DMW} prove that any imaginary quadratic field $E = \QQ(\sqrt{(-q_3)(+q_5)})$ with $q_3\equiv3\pmod{8}$, $q_5\equiv 5\pmod{8}$, and $q_5+q_3 = 4(2M^2)^{2^{n-1}}$ for some odd integer $M$ has $2$-class group exactly $\Cl_2(F) \simeq C_{2^n}$, and also that there are infinitely many such quadratic fields $E$ \cite[Theorem 3.1]{DMW}. For such $E$ we easily check that $R_E = \left[\begin{smallmatrix}- & 0 \\ 0 & -\end{smallmatrix}\right]$ if and only if $n\ge2$, so we may find $K$ with $(p_3^*,p_5^*) = (-q_3,+q_5)$ (here $F = E$).

It is then standard to find primes $p_1,p_2,p_4$ with $p_1,p_2\equiv3\pmod{4}$ and $p_4\equiv1\pmod4$ such that $p_1^*,p_2^* < 0$ and $p_4^* > 0$ satisfy the correct Kronecker symbols with respect to each other and with respect to $p_3,p_5$.
\end{example}

\begin{theorem}[Progress on matrices 16 and 28]
\label{THM:3-neg-matrix-16,28}

Suppose $R_K \in\{\#16,\#28\}$.

\begin{enumerate}

\item If $R_K = \#16$, then set $F \colonequals \QQ(\sqrt{p_3^* p_5^*})$.

\item If $R_K = \#28$, then set $F \colonequals \QQ(\sqrt{p_3^* p_4^*})$.
\end{enumerate}
In each case, $p_2$ is inert in $F/\QQ$, the \Redei matrix $R_F = \left[\begin{smallmatrix}- & 0 \\ 0 & -\end{smallmatrix}\right] = \left[\begin{smallmatrix}0 & 0 \\ 0 & 0\end{smallmatrix}\right]$ has rank $0$, and $F$ is imaginary, so $\Cl(F) = \Cl^+(F)$ has $4$-rank $(2-1) - 0 = 1$ by \Redei--Reichardt, and thus $\Cl_2(F) \simeq C_{2^n}$ for some $n\ge2$. By Lemma \ref{LEM:F=Q(p4*p5*)-imaginary}\eqref{SUBLEM:F=Q(p4*p5*)-imaginary:4,1-inert,1-split}, $K$ has infinite $2$-tower if $p_1$ splits completely in $L \colonequals F_{(2)}^1$ (note that $\card{\Cl_2(F)} \ge 4$ automatically holds, from $n\ge2$).
\end{theorem}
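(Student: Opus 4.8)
The plan is to reduce the whole statement to a direct reading of the two \Redei matrices and a single invocation of Lemma~\ref{LEM:F=Q(p4*p5*)-imaginary}\eqref{SUBLEM:F=Q(p4*p5*)-imaginary:4,1-inert,1-split}. I use only the defining convention $(-1)^{a_{ij}} = \tqchar{p_i^*}{p_j}$ together with multiplicativity of the Kronecker symbol. From this, $p_2$ is inert in $F = \QQ(\sqrt{p_i^* p_j^*})$ exactly when $\tqchar{p_i^* p_j^*}{p_2} = -1$, i.e.\ when $a_{i2} + a_{j2} = 1$ in $\FF_2$; and the two off-diagonal entries of the $2\times 2$ matrix $R_F$ are precisely $a_{ij}$ and $a_{ji}$, so $R_F$ has rank $0$ iff both vanish (the diagonal then vanishing as well, since rows of a \Redei matrix sum to zero).

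Carrying out these two readings is the core of the argument. For $R_K = \#16$ I set $(i,j) = (3,5)$: the matrix gives $a_{32} + a_{52} = 0 + 1 = 1$, so $p_2$ is inert, and $a_{35} = a_{53} = 0$, so $\rank_{\FF_2} R_F = 0$. For $R_K = \#28$ I set $(i,j) = (3,4)$: here $a_{32} + a_{42} = 0 + 1 = 1$, so $p_2$ is inert, and $a_{34} = a_{43} = 0$, so again $R_F = 0$. In both cases $\Delta_F$ is the product of a negative prime discriminant $p_3^*$ and a positive one ($p_5^*$ resp.\ $p_4^*$), hence $F$ is imaginary and $\Cl_2(F) = \Cl^+_2(F)$.

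Next I feed this into the genus-theoretic machinery: with $F$ imaginary, two ramified primes, and $\rank_{\FF_2} R_F = 0$, \Redei--Reichardt gives $d_4\Cl^+_2(F) = (2-1) - 0 = 1$, and Gauss' genus theory then forces $\Cl_2(F) \simeq C_{2^n}$ with $n \ge 2$ (cyclic because $d_2\Cl(F) = 1$, and $n \ge 2$ because $d_4 = 1$). In particular $\card{\Cl_2(F)} \ge 4$ holds automatically. Two of the three hypotheses of Lemma~\ref{LEM:F=Q(p4*p5*)-imaginary}\eqref{SUBLEM:F=Q(p4*p5*)-imaginary:4,1-inert,1-split}---namely $\card{\Cl_2(F)} \ge 4$ and the inertness of at least one of $p_1,p_2,p_3$ in $F$ (witnessed by $p_2$)---are thus established unconditionally, and the Lemma produces an infinite $2$-tower for $K$ as soon as the remaining hypothesis, complete splitting of some $p_i$ in $L \colonequals F_{(2)}^1$, is supplied by $p_1$.

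The main obstacle---and the reason the theorem is stated conditionally on $p_1$---is that this last condition is genuinely \emph{not} visible in $R_K$. A necessary preliminary, that $p_1$ split (rather than remain inert) in $F$, \emph{is} readable: it amounts to $a_{31} + a_{51} = 0$ for $\#16$ and $a_{31} + a_{41} = 0$ for $\#28$, both of which hold, so the hypothesis is at least consistent. But by the decomposition law (Proposition~\ref{PROP:decomposition-law}) the primes of $F$ over $p_1$ split completely in $L/F$ only when their ideal classes have odd order in $\Cl(F)$, which depends on the finer arithmetic of $F$ rather than on $R_K$. I would therefore leave complete splitting of $p_1$ as the per-example input, verified numerically as in Example~\ref{EX:matrices-16,28-examples}; note the contrast with $p_2$, whose inert contribution is automatic via the principality trick of Remark~\ref{RMK:inert-principal-decomposition}.
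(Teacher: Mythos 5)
Your proposal is correct and takes essentially the same route as the paper: the theorem's inline argument consists precisely of reading $a_{32}+a_{52}=1$ and $a_{35}=a_{53}=0$ (resp.\ $a_{32}+a_{42}=1$ and $a_{34}=a_{43}=0$) off $R_K$, applying \Redei--Reichardt and Gauss' genus theory to get $\Cl_2(F) = \Cl_2^+(F) \simeq C_{2^n}$ with $n\ge2$, and invoking Lemma~\ref{LEM:F=Q(p4*p5*)-imaginary}\eqref{SUBLEM:F=Q(p4*p5*)-imaginary:4,1-inert,1-split} with the complete splitting of $p_1$ in $L = F_{(2)}^1$ left as a hypothesis---and your observation that this last condition is not determined by $R_K$ (only the necessary splitting of $p_1$ in $F$ is) matches the paper's treatment, where Example~\ref{EX:matrices-16,28-examples} supplies such $p_1$ via Chebotarev rather than leaving it purely to numerical checks. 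One trivial nit: the three primes unramified in $F$ are $p_1,p_2,p_4$ (for $\#16$) resp.\ $p_1,p_2,p_5$ (for $\#28$), not ``$p_1,p_2,p_3$'' as you write when matching the lemma's $\ell_1,\ell_2,\ell_3$---but since your witnesses $p_2$ (inert) and $p_1$ (split) lie in both triples, nothing is affected.
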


\begin{example}
\label{EX:matrices-16,28-examples}

We describe the method for $R_K = \#16$; the case $R_K = \#28$ is analogous. For $R_K = \#16$, one starts with any appropriate $p_3^*,p_5^*\ne -4$ (so $p_3^* < 0 < p_5^*$) satisfying $\tqchar{p_3^*}{p_5} = \tqchar{p_5^*}{p_3} = +1$ (so $\card{\Cl_2(F)} \ge 4$ automatically), and then takes $p_1\equiv3\pmod{4}$ totally split in $L$, guaranteeing $\tqchar{p_3^*}{p_1} = \tqchar{p_5^*}{p_1} = +1$ in view of the tower $F \le \QQ(\sqrt{p_3^*},\sqrt{p_5^*}) \le L$.

Such $p_1$ exists by Chebotarev's density theorem: $\sqrt{-1}\notin F$ means $\sqrt{-1}\notin F_{\map{gen}} = \QQ(\sqrt{p_3^*},\sqrt{p_5^*})$, so $F(\sqrt{-1})/F$ is ramified, so $\sqrt{-1}\notin L$. So $L/\QQ < L(i)/\QQ$ is a \emph{proper} inclusion of Galois extensions, whence by Neukirch \cite[p. 548, Ch. VII, Sec. 13, Corollary 13.10]{Neukirch} there exist infinitely many primes $p_1$ splitting completely in $L$ but \emph{not} splitting completely in $L(i)$.\begin{arxiv} (Of course, any prime splitting completely in $L(i)$ must automatically split completely in $L$.) There is also a more direct approach: $\Gal(L(i)/\QQ) = \Gal(L/\QQ)\times \Gal(\QQ(i)/\QQ)$, so we may apply Chebotarev to the conjugacy class of size $1$ consisting of the generator of $\Gal(\QQ(i)/\QQ)$.\end{arxiv}

It is then standard to find primes $p_2,p_4$ with $p_2\equiv3\pmod{4}$ and $p_4\equiv1\pmod4$ such that $p_2^* < 0$ and $p_4^* > 0$ satisfy the correct Kronecker symbols with respect to each other and with respect to $p_1,p_3,p_5$.
\end{example}

\begin{theorem}[Alternative progress on matrix 28, analogous to Theorem \ref{THM:5-neg-matrix-A} for matrix $A$]
\label{THM:3-neg-matrix-28-alternative-progress}

Suppose $R_K = \#28$, and set $F \colonequals \QQ(\sqrt{p_1^* p_2^* p_3^*})$ (imaginary quadratic field, so $\Cl(F) = \Cl^+(F)$). Then $p_4,p_5$ are inert in $F/\QQ$.

\Redei--Reichardt says $d_4 \Cl^+_2(F) = (3-1) - 1 = 1$ (since $R_F = \left[\begin{smallmatrix}- & 1 & 1 \\ 0 & - & 1 \\ 0 & 0 & -\end{smallmatrix}\right] = \left[\begin{smallmatrix}0 & 1 & 1 \\ 0 & 1 & 1 \\ 0 & 0 & 0\end{smallmatrix}\right]$), so $\Cl_2(F) \simeq C_2\oplus C_{2^n}$ for some $n\ge2$. By Lemma \ref{LEM:F=Q(p3*p4*p5*)-imaginary}\eqref{SUBLEM:F=Q(p3*p4*p5*)-imaginary:16,2-inert}, $K$ has infinite $2$-tower if $\card{\Cl_2(F)} \ge 16$, i.e. $n\ge3$, holds.

\end{theorem}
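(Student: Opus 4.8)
The plan is to mirror the proof of Theorem~\ref{THM:5-neg-matrix-A}, transporting the bookkeeping to the appropriate rows and columns of $\#28$. Throughout I would use the dictionary $(-1)^{a_{ij}} = \tqchar{p_i^*}{p_j}$, so that $p_j$ is inert in $\QQ(\sqrt{p_i^*})$ exactly when $a_{ij} = 1$; more to the point, since $F = \QQ(\sqrt{p_1^* p_2^* p_3^*})$, a prime $p$ coprime to $\Delta_F = p_1^* p_2^* p_3^*$ is inert in $F/\QQ$ precisely when $\tqchar{p_1^*}{p}\tqchar{p_2^*}{p}\tqchar{p_3^*}{p} = -1$, a criterion valid even at $p = 2$ (if some $p_i^* = \pm 8$) by the Kronecker-symbol formulation of the splitting law. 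Note $p_4, p_5$ are distinct from $p_1, p_2, p_3$, hence unramified in $F$, so ``inert'' is meaningful.

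First I would verify the inertness claim. Reading column $4$ of $\#28$ gives $a_{14} + a_{24} + a_{34} = 0 + 1 + 0 = 1$ in $\FF_2$, and reading column $5$ gives $a_{15} + a_{25} + a_{35} = 1 + 1 + 1 = 1$; by the dictionary above these say exactly that $p_4$ and $p_5$ are inert in $F/\QQ$.

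Next I would pin down $R_F$. Since $F$ has prime-discriminant factorization $p_1^* p_2^* p_3^*$, the off-diagonal entries of $R_F$ are literally the top-left $3\times 3$ off-diagonal block of $\#28$, namely $(R_F)_{12} = (R_F)_{13} = (R_F)_{23} = 1$ and $(R_F)_{21} = (R_F)_{31} = (R_F)_{32} = 0$. Recovering the diagonal from the column-sum-zero relation gives $(R_F)_{11} = 0$, $(R_F)_{22} = 1$, $(R_F)_{33} = 0$, i.e. the filled-in matrix $\left[\begin{smallmatrix}0 & 1 & 1 \\ 0 & 1 & 1 \\ 0 & 0 & 0\end{smallmatrix}\right]$ of the statement. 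Its first two rows coincide and its third vanishes, so $\rank_{\FF_2} R_F = 1$.

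It then remains to assemble the consequences. \Redei--Reichardt gives $d_4 \Cl^+(F) = (3-1) - \rank_{\FF_2} R_F = 1$, and since $F$ is imaginary (as $p_1^* p_2^* p_3^* < 0$) we have $\Cl^+(F) = \Cl(F)$ with $d_2 \Cl(F) = 3 - 1 = 2$ by Gauss' genus theory; the structure theorem for finite abelian $2$-groups then forces $\Cl_2(F) \simeq C_2 \oplus C_{2^n}$ with $n \geq 2$. Finally, with $F$, $p_4$, $p_5$ playing the roles of $F$, $\ell_1$, $\ell_2$ in Lemma~\ref{LEM:F=Q(p3*p4*p5*)-imaginary}, the established inertness of $p_4, p_5$ together with $\card{\Cl_2(F)} = 2^{n+1} \geq 16 \Leftrightarrow n \geq 3$ delivers the infinite $2$-tower via Lemma~\ref{LEM:F=Q(p3*p4*p5*)-imaginary}\eqref{SUBLEM:F=Q(p3*p4*p5*)-imaginary:16,2-inert}. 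I do not expect a genuine obstacle: the entire argument is routine once the matrix data is read off correctly. The only points demanding care are the consistent Kronecker-symbol (rather than Legendre-symbol) reading of the splitting law at a possibly-even prime, and the correct matching of indices between $\#28$ and the hypotheses of the lemma.
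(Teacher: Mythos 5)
Your proposal is correct and follows essentially the same route as the paper, whose justification is embedded in the theorem statement itself (mirroring Theorem \ref{THM:5-neg-matrix-A}): the column-$4$ and column-$5$ parities of $\#28$ give the inertness of $p_4,p_5$, the top-left $3\times3$ off-diagonal block gives $R_F$ of rank $1$, and \Redei--Reichardt plus genus theory plus Lemma \ref{LEM:F=Q(p3*p4*p5*)-imaginary}\eqref{SUBLEM:F=Q(p3*p4*p5*)-imaginary:16,2-inert} (with $p_4,p_5$ in the roles of $\ell_1,\ell_2$) finish exactly as you describe. Your computations all check out: $a_{14}+a_{24}+a_{34}=0+1+0=1$ and $a_{15}+a_{25}+a_{35}=1+1+1=1$ in $\FF_2$, the diagonal recovery of $R_F$ via zero column sums yielding rank $1$, and $\card{\Cl_2(F)}=2^{n+1}\ge16\Leftrightarrow n\ge3$.
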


\begin{example}
\label{EX:matrix-28-examples-mimicking-matrix-A}

The examples here are analogous to those (Example \ref{EX:matrix-A-examples}) for matrix $A$ from the case of five negative discriminants.
\end{example}

\section{Prime splitting barrier, and extension question of independent interest}
\label{SEC:prime-splitting-HCF-barrier}

This section (including Question \ref{QUES:CPGT-based-conjecture-prime-splitting-Hilbert-class-fields} of independent interest) is motivated by our failed attempts at applying (to the open cases of Martinet's question) Proposition \ref{PROP:decomposition-law} and Remark \ref{RMK:inert-principal-decomposition} on the decomposition law in Hilbert $2$-class fields. The results in this section, as well as our extension question (Question \ref{QUES:CPGT-based-conjecture-prime-splitting-Hilbert-class-fields}), stem from the key ``classical principal genus theorem'' (CPGT) over the rationals.

\begin{theorem}[CPGT over $\QQ$; see {\cite[Proposition 2.12]{Frohlich}}]
\label{THM:classical-PGT-over-rationals}

Let $K/\QQ$ be a quadratic extension, and $\sigma$ a generator of $\Gal(K/\QQ)$. Fix a nonzero fractional ideal $I$ of $K$. Then the ideal class $[I]\in \Cl(K)$ lies in $\Cl(K)^{1-\sigma}$ (which here coincides with $\Cl(K)^2$) if and only if the ideal norm $N_{K/\QQ}(I) = I\sigma(I)\cap \QQ$ takes the form $\alpha\mcal{O}_\QQ$ for some $\alpha\in \QQ^\times$ that is a \emph{local norm} (or \emph{norm residue}) at all (finite and infinite) ramified primes in $K/\QQ$.
\end{theorem}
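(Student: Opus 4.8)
The plan is to reduce the statement to the Hasse Norm Theorem for the cyclic extension $K/\QQ$, after first pinning down the two pieces of the assertion. I would begin with the parenthetical claim that $\Cl(K)^{1-\sigma} = \Cl(K)^2$. This is immediate once one notes that $\sigma$ acts on $\Cl(K)$ by inversion: for any ideal $I$ one has $I\,\sigma(I) = N_{K/\QQ}(I)\,\mcal{O}_K$, and since $N_{K/\QQ}(I)$ is a (principal) ideal of $\ZZ$ generated by a rational integer, the class $[I\,\sigma(I)]$ is trivial, so $\sigma[I] = [I]^{-1}$ and hence $[I]^{1-\sigma} = [I]^2$. Throughout I will write $N = N_{K/\QQ}$ and let $\alpha$ denote a generator of the fractional ideal $N(I)$ of $\ZZ$, which is well defined up to sign.

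For the forward direction, suppose $[I]\in\Cl(K)^2$, say $I = (\gamma)\,J^2$ with $\gamma\in K^\times$ and $J$ an ideal; writing $N(J) = (c)$ for some $c\in\QQ^\times$ and using $N(c) = c^2$ for the rational scalar $c$, I get
\[ N(I) = (N(\gamma))\,(c)^2 = \bigl(N(\gamma)\,c^2\bigr) = \bigl(N(\gamma c)\bigr). \]
Thus $N(I)$ has a generator $\alpha = N(\gamma c)$ that is a \emph{global} norm from $K^\times$, and a global norm is trivially a local norm at every place, in particular at every ramified prime. (When $K$ is imaginary this also explains the archimedean condition: a global norm is positive, matching $N_{\CC/\RR}(\CC^\times) = \RR_{>0}$.)

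The converse is the crux. Given a generator $\alpha$ of $N(I)$ that is a local norm at all ramified primes, I would first argue that $\alpha$ is automatically a local norm at the \emph{unramified} places as well, so that it is a local norm everywhere. At a split finite prime (and at the real place when $K$ is real) the local extension is trivial, so the condition is vacuous; at an inert prime $p$ the completion $K_w/\QQ_p$ is unramified, whose norm group consists of the elements of even valuation, and $v_p(\alpha) = v_p(N(I)) = 2\sum_{\mathfrak p\mid p} v_{\mathfrak p}(I)$ is even because the single prime above $p$ has residue degree $2$. Being a local norm everywhere, $\alpha$ is a global norm by the Hasse Norm Theorem, say $\alpha = N(\gamma)$. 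Setting $I' = I\,(\gamma)^{-1}$ gives $[I'] = [I]$ and $N(I') = (1)$; factoring $I'$ into primes, the condition $N(I') = (1)$ forces the exponents at inert and ramified primes to vanish and pairs up each split prime as $(\mathfrak p\,\sigma(\mathfrak p)^{-1})^{a}$, whose class is $[\mathfrak p]^{2a}\in\Cl(K)^2$ since $\sigma$ inverts. Hence $[I] = [I']\in\Cl(K)^2$, completing the argument.

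The main obstacle is the hard direction of the Hasse Norm Theorem, which does the real work in the converse and is the one genuinely deep input; the surrounding steps are bookkeeping, though care is needed at $p=2$ and at the archimedean place to make the ``unramified places are automatic'' claim airtight and to choose the sign of the generator $\alpha$ consistently with the archimedean norm condition.
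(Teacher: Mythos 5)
Your proposal is correct. Note that the paper itself offers no proof of this statement---it is quoted as a known result with a pointer to \cite[Proposition 2.12]{Frohlich}---so there is no internal argument to compare against; what you have written is a complete, self-contained derivation along the standard lines. All the key steps check out: the identity $I\sigma(I) = N_{K/\QQ}(I)\mcal{O}_K$ does show $\sigma$ acts by inversion on $\Cl(K)$, giving $\Cl(K)^{1-\sigma} = \Cl(K)^2$; the forward direction correctly reduces to the fact that a global norm ($\alpha = N(\gamma c)$, using $N(c) = c^2$ for rational $c$) is a local norm everywhere; and the converse correctly upgrades the hypothesis at ramified places to a local-norm condition at \emph{all} places (vacuous at split places since $K_w = \QQ_p$, and automatic at inert $p$ since the norm group of the unramified quadratic extension is the set of elements of even valuation while $v_p(N(I)) = 2v_{\mf{p}}(I)$ is even---an argument that, contrary to your closing worry, works uniformly at $p = 2$ as well, since an inert $2$ is unramified by definition and a ramified $2$ is covered by hypothesis). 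The final bookkeeping with $I' = I(\gamma)^{-1}$, $N(I') = (1)$, forcing trivial exponents at inert and ramified primes and paired exponents $\mf{p}^a\sigma(\mf{p})^{-a}$ at split primes with class $[\mf{p}]^{2a}$, is exactly right. The sign issue you flag is likewise benign: when $K$ is imaginary the ramified infinite place is part of the hypothesis and forces $\alpha > 0$, and when $K$ is real the infinite place of $\QQ$ splits, so no archimedean condition arises. The only genuinely deep input is, as you say, the Hasse norm theorem for the cyclic extension $K/\QQ$, which is a legitimate citation.
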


\begin{remark}
Dominguez, Miller, and Wong \cite{DMW} similarly used Hasse's ``fundamental criterion'' \cite[p. 345]{Hasse} to prove the infinitude of imaginary quadratic fields $K$ with cyclic $2$-class groups $C_{2^n}$ for any $n\ge1$, and Lopez \cite{Lopez} extended their method to $2$-class groups $C_2\oplus C_{2^n}$ for $n\ge1$. Recall that these results give a wealth of examples in Examples \ref{EX:matrix-32-examples} and \ref{EX:matrix-D1-examples}, respectively.
\end{remark}







We now fully work out two specific applications, originally motivated by some SAGE tests based on attempts at modifying Lemmas \ref{LEM:F=Q(p4*p5*)-real-positive-p4*,p5*} (for a concrete example see Example \ref{EX:source-of-failure-and-4-rank-direction-motivation}) and \ref{LEM:F=Q(p3*p4*p5*)-imaginary}.

\begin{theorem}[Cf. Dominguez--Miller--Wong {\cite[proof of Lemma 2.3]{DMW}}]
\label{THM:CPGT-F=Q(p4*p5*)-real-positive-p4*,p5*}

Fix distinct primes $\ell_1,\ell_2\equiv1\pmod{4}$. Let $F = \QQ(\sqrt{\ell_1 \ell_2})$, so $\Cl_2(F) \simeq C_{2^n}$ for some $n\ge1$. Let $p$ be a prime with $\tqchar{\ell_1}{p} = \tqchar{\ell_2}{p} = -1$. Then $(p)$ splits into exactly $2$ primes in the extension $F_{(2)}^1 / \QQ$.
\end{theorem}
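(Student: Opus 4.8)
The plan is to combine the decomposition law (Proposition~\ref{PROP:decomposition-law}) with CPGT (Theorem~\ref{THM:classical-PGT-over-rationals}). First I would observe that $p$ splits in $F/\QQ$: since $\Delta_F = \ell_1\ell_2$ with both $\ell_i \equiv 1 \pmod 4$, the hypothesis $\tqchar{\ell_1}{p} = \tqchar{\ell_2}{p} = -1$ yields $\tqchar{\Delta_F}{p} = \tqchar{\ell_1}{p}\tqchar{\ell_2}{p} = +1$, so $p\mcal{O}_F = \mf{p}\cdot\sigma(\mf{p})$ for a degree-one prime $\mf{p}$ and the nontrivial $\sigma \in \Gal(F/\QQ)$. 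Because $L \colonequals F_{(2)}^1$ is canonically attached to the Galois extension $F/\QQ$, it is Galois over $\QQ$, so the conjugate primes $\mf{p}$ and $\sigma(\mf{p})$ split into equally many primes of $L$; hence the number of primes of $L$ above $(p)$ is twice the number above $\mf{p}$. By Proposition~\ref{PROP:decomposition-law} this latter number equals $\card{\Cl_2(F)}$ divided by the $2$-part of $\ord_{\Cl(F)}[\mf{p}]$, so it suffices to show that this $2$-part is the full $2^n = \card{\Cl_2(F)}$, i.e.\ that the image of $[\mf{p}]$ generates the cyclic group $\Cl_2(F)\simeq C_{2^n}$.

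Since $\Cl_2(F)$ is cyclic, $[\mf{p}]$ generates it precisely when $[\mf{p}]\notin \Cl(F)^2$: writing $\Cl(F)$ as the sum of its $2$-part $C_{2^n}$ and its odd part, squaring is bijective on the odd part, so $\Cl(F)^2$ meets $\Cl_2(F)$ exactly in the index-$2$ subgroup $2\cdot C_{2^n}$. The second step is therefore to apply Theorem~\ref{THM:classical-PGT-over-rationals}, which identifies $\Cl(F)^2$ with $\Cl(F)^{1-\sigma}$ and characterizes membership of $[\mf{p}]$ through the ideal norm $N_{F/\QQ}(\mf{p}) = (p)$. Concretely, $[\mf{p}]\in \Cl(F)^2$ would force one of the two generators $\alpha \in \{\pm p\}$ of $p\ZZ$ to be a local norm at every ramified prime of $F/\QQ$. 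Here $F$ is real and $\Delta_F = \ell_1\ell_2$ is odd, so the ramified primes are exactly $\ell_1$ and $\ell_2$ (the archimedean place splits). Thus it is enough to exhibit one ramified prime at which \emph{neither} $p$ nor $-p$ is a local norm.

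The final step is the Hilbert-symbol computation at $\ell_1$. Using the standard tame formula for the symbol $(\ell_1\ell_2,\alpha)_{\ell_1}$ at the odd prime $\ell_1$ (with $\ell_1 \equiv 1 \pmod 4$, so the factor $(-1)^{(\ell_1-1)/2}$ is trivial), one finds $(\ell_1\ell_2, p)_{\ell_1} = \tqchar{p}{\ell_1}$, and since $\tqchar{-1}{\ell_1} = +1$ one likewise gets $(\ell_1\ell_2, -p)_{\ell_1} = \tqchar{-p}{\ell_1} = \tqchar{p}{\ell_1}$. Quadratic reciprocity (again using $\ell_1\equiv1\pmod4$) gives $\tqchar{p}{\ell_1} = \tqchar{\ell_1}{p} = -1$ by hypothesis. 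Hence neither $p$ nor $-p$ is a local norm at $\ell_1$, so $[\mf{p}]\notin\Cl(F)^2$, the image of $[\mf{p}]$ generates $\Cl_2(F)$, and each of $\mf{p},\sigma(\mf{p})$ has a single prime of $L$ above it, giving exactly $2$ primes of $L$ above $(p)$.

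I expect the main obstacle to be careful bookkeeping rather than depth. Two points require attention: translating ``$[\mf{p}]$ generates the cyclic $2$-part'' into the non-membership $[\mf{p}]\notin\Cl(F)^2$, and---crucially---handling the sign ambiguity in the generator of $N_{F/\QQ}(\mf{p})$ by testing \emph{both} $p$ and $-p$ in CPGT. The hypothesis $\ell_1\equiv1\pmod4$ is precisely what makes the $\tqchar{-1}{\ell_1}$ factor harmless, so the two signs fail the local-norm condition simultaneously; everything downstream of the CPGT reduction is then routine reciprocity.
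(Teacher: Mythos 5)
Your proposal is correct and follows essentially the same route as the paper's proof: split $p$ in $F/\QQ$, use Theorem~\ref{THM:classical-PGT-over-rationals} with both generators $\pm p$ of $N_{F/\QQ}(\mf{p})$ to show $[\mf{p}]\notin\Cl(F)^2$, conclude via cyclicity of $\Cl_2(F)$ that $[\mf{p}]$ has order divisible by $2^n$, and finish with the decomposition law (Proposition~\ref{PROP:decomposition-law}). The only differences are cosmetic: you make explicit the tame Hilbert-symbol computation that the paper dismisses as ``standard,'' and you handle the second prime above $p$ via Galois symmetry of $L/\QQ$ where the paper simply applies the same class-group argument to both $\mf{p}_1,\mf{p}_2$.
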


\begin{proof}
The rational prime $(p)$ splits into $2$ primes $\mf{p}_1,\mf{p}_2$ in $F/\QQ$, of ideal norm $p\ZZ$. Using the Kronecker symbol conditions on $\ell_1,\ell_2$, it is standard to check that neither $p$ nor $-p$ is a local norm at either of the ramified primes $\ell_1,\ell_2$. By Theorem \ref{THM:classical-PGT-over-rationals} applied to $F/\QQ$, the ideal classes $[\mf{p}_i]\in \Cl(F)$ must lie outside of $\Cl(F)^2$, so their orders are divisible by $2^n$, the size of the largest cyclic $2$-subgroup of $\Cl(F)$. Thus $\mf{p}_1,\mf{p}_2$ are inert in $F_{(2)}^1 / F$ by the decomposition law (Proposition \ref{PROP:decomposition-law}).
\end{proof}

\begin{theorem}[Cf. Lopez {\cite[proof of Proposition 2.3]{Lopez}}]
\label{THM:CPGT-F=Q(p3*p4*p5*)-imaginary}

Fix distinct primes $\ell_1,\ell_2,\ell_3\equiv3\pmod{4}$. Let $F = \QQ(\sqrt{\ell_1^* \ell_2^* \ell_3^*})$ be an imaginary quadratic field with $\Cl_2(F) = \Cl^+_2(F) \simeq C_2\oplus C_{2^n}$ for some $n\ge2$; by genus theory and \Redei--Reichardt, \Wlog{} suppose $R_F = \inmatrix{- & 1 & 1 \\ 0 & - & 1 \\ 0 & 0 & -}$. Take a prime $p\nmid \Delta_F$ and any prime $\mf{p}\mid p$ of $F$. Then $\mf{p}$ splits into exactly $2$ primes in the extension $F_{(2)}^1/F$ if and only if $(\tqchar{\ell_1^*}{p}, \tqchar{\ell_2^*}{p}, \tqchar{\ell_3^*}{p}) \in\{(+1,-1,-1),(-1,+1,-1)\}$.
\end{theorem}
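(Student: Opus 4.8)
The plan is to convert ``splits into exactly $2$ primes'' into a statement about the $2$-power order of $[\mf p]$ in $\Cl(F)$, and then read that order off from genus theory together with the CPGT (Theorem \ref{THM:classical-PGT-over-rationals}). First I would apply the decomposition law (Proposition \ref{PROP:decomposition-law}): $\mf p$ splits into $\card{\Cl_2(F)}/(\text{largest power of }2\text{ dividing }\ord_{\Cl(F)}[\mf p])$ primes in $F_{(2)}^1/F$. Since $\card{\Cl_2(F)} = 2^{n+1}$, the prime $\mf p$ splits into exactly $2$ primes if and only if the $2$-part of $\ord_{\Cl(F)}[\mf p]$ equals $2^n$, i.e. the image of $[\mf p]$ in $\Cl_2(F)\simeq C_2\oplus C_{2^n}$ has maximal order $2^n$. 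Using $n\ge2$, this is equivalent to $[\mf p]\notin \Cl_2(F)[2^{n-1}]$, the unique index-$2$ subgroup that still has $2$-rank $2$ (the classes of non-maximal $2$-power order).

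Next I would set up the genus characters $\chi_i(\mf a)=\qchar{\ell_i^*}{N\mf a}$ for $i=1,2,3$. Since $\ell_1^*\ell_2^*\ell_3^*=\Delta_F$ one has $\chi_1\chi_2\chi_3=1$, and (as $d_2\Cl_2(F)=2$) the triple induces an isomorphism $\Cl_2(F)/\Cl_2(F)^2\xrightarrow{\sim}G:=\{(\epsilon_1,\epsilon_2,\epsilon_3)\in\{\pm1\}^3:\epsilon_1\epsilon_2\epsilon_3=1\}$. For a degree-one prime $\mf p\mid p$ the tuple $(\qchar{\ell_1^*}{p},\qchar{\ell_2^*}{p},\qchar{\ell_3^*}{p})$ then records the image of $[\mf p]$ in $\Cl_2(F)/\Cl_2(F)^2$ (an inert $p$ gives product $-1$, so it is excluded from the allowed set and is anyway principal, splitting into $2^{n+1}>2$ primes). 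By the CPGT applied to $F/\QQ$ — with the infinite place forcing the norm generator to be $+p$ rather than $-p$, because $F$ is imaginary — the class $[\mf p]$ lies in $\Cl(F)^2$ exactly when $p$ is a local norm at every ramified $\ell_i$, i.e. $\qchar{p}{\ell_i}=\qchar{\ell_i^*}{p}=+1$ for all $i$; this recovers the identity tuple $(+1,+1,+1)$ and confirms the isomorphism.

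It then remains to decide which of the three nontrivial cosets of $\Cl_2(F)^2$ consists of classes of order $<2^n$ (the ``$C_2$ direction''), since the other two consist of classes of order $2^n$. For this I would use that the ambiguous $2$-torsion $\Cl_2(F)[2]$ is generated by the ramified classes $[\mf l_1],[\mf l_2],[\mf l_3]$ (with $\mf l_i^2=(\ell_i)$) subject only to $[\mf l_1][\mf l_2][\mf l_3]=[(\sqrt{\Delta_F})]=1$, together with the Rédei dictionary $\chi_j([\mf l_i])=(-1)^{a_{ji}}$ (off-diagonal directly from the defining Kronecker symbols, on-diagonal from the paper's complementary-discriminant definition $(-1)^{a_{ii}}=\tqchar{\Delta_F/p_i^*}{p_i}$). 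Consequently $\prod_{i\in S}[\mf l_i]\in\Cl(F)^2$ iff $\mathbf{1}_S\in\ker R_F$. For the normalized $R_F=\inmatrix{- & 1 & 1\\0 & - & 1\\0 & 0 & -}$ one computes $\ker R_F=\langle(1,0,0),(0,1,1)\rangle$, so the unique nontrivial square among the ambiguous classes is $[\mf l_1]$ (note $[\mf l_2][\mf l_3]=[\mf l_1]$), and $u:=[\mf l_2]$ is a nonsquare involution with genus-tuple $((-1)^{a_{12}},(-1)^{a_{22}},(-1)^{a_{32}})=(-1,-1,+1)$. Hence the classes of order $<2^n$ (which fill out $\Cl_2(F)[2^{n-1}]=\langle u,\Cl_2(F)^2\rangle$) are exactly those with genus-tuple in $\{(+1,+1,+1),(-1,-1,+1)\}$, while the classes of order $2^n$ have tuple in the complement $\{(+1,-1,-1),(-1,+1,-1)\}$; combining with the first step gives the claim.

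The conceptual core, and the step most prone to sign and indexing errors, is the Rédei dictionary of the third paragraph: justifying $\chi_j([\mf l_i])=(-1)^{a_{ji}}$ (including the diagonal via the complementary factorization) and hence that squareness of a product of ramified classes is controlled by $\ker R_F$. Once this is in hand, identifying $[\mf l_1]$ as the distinguished square class — equivalently, singling out $\chi_3$ as the character whose kernel is $\Cl_2(F)[2^{n-1}]/\Cl_2(F)^2$ — is forced by the normalized shape of $R_F$, and the remaining genus-theory and group-theory bookkeeping is routine. The hypothesis $n\ge2$ (equivalent to $d_4\Cl_2^+(F)=1$) is exactly what makes the distinguished ambiguous class a genuine square $v^{2^{n-1}}=(v^{2^{n-2}})^2$ and the ``$C_2$ direction'' a true non-maximal-order coset.
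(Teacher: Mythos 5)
Your proposal is correct, and its skeleton coincides with the paper's: both use the decomposition law (Proposition \ref{PROP:decomposition-law}) to reduce the splitting statement to whether the $2$-part of $\ord_{\Cl(F)}[\mf{p}]$ attains the maximal value $2^n$ (equivalently, whether the coset $[\mf{p}]\cdot\Cl(F)[2]$ misses $\Cl(F)^2$), both handle the inert case by principality (giving $2^{n+1}>2$ primes, with symbol product $-1$), and both feed the same genus-theoretic input---$\Cl(F)[2]$ is generated by the ramified classes---into the CPGT (Theorem \ref{THM:classical-PGT-over-rationals}). Where you genuinely differ is the execution of the decisive step. The paper applies the CPGT to the whole coset at once, asking whether some $\eps p\prod\ell_i^{t_i}$ (over all $\eps=\pm1$, $t_i\in\FF_2$) is a local norm at $\infty,\ell_1,\ell_2,\ell_3$, and disposes of this $16$-fold casework by unspecified ``standard computations'' with Hilbert symbols. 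You instead invoke the CPGT once, to identify $\Cl(F)^2$ with the principal genus (the infinite place eliminating $\eps=-1$ since $F$ is imaginary), and then replace the casework by structure: the dictionary $\chi_j([\mf{l}_i])=(-1)^{a_{ji}}$---with the diagonal handled correctly via the complementary-factor convention matching the paper's definition of $a_{ii}$---shows that squareness of $\prod_{i\in S}[\mf{l}_i]$ is governed by $\ker R_F$, and for the normalized matrix the kernel $\langle(1,0,0),(0,1,1)\rangle$ pins down $[\mf{l}_1]$ as the unique nontrivial square and $[\mf{l}_2]$ (genus tuple $(-1,-1,+1)$) as the nonsquare involution spanning $\Cl_2(F)[2^{n-1}]$ over $\Cl(F)^2$, which yields exactly the excluded tuples $(+1,+1,+1)$ and $(-1,-1,+1)$. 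The trade-off: the paper's route is shorter and needs no genus-character formalism, while yours makes transparent that the answer is a function of $R_F$ and the symbols $\tqchar{\ell_i^*}{p}$ alone---precisely the phenomenon the paper revisits in Question \ref{QUES:CPGT-based-conjecture-prime-splitting-Hilbert-class-fields} and the remark following it---and it isolates exactly where $n\ge2$ enters (making the distinguished ambiguous class a genuine square). I found no gaps; all the group-theoretic bookkeeping (e.g.\ that $\Cl_2(F)[2^{n-1}]$ is the unique index-$2$ subgroup of $2$-rank $2$, and that any nonsquare involution generates it over $\Cl(F)^2$) checks out.
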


\begin{proof}
By the decomposition law (Proposition \ref{PROP:decomposition-law}), $\mf{p}$ splits into exactly $2$ primes in $F_{(2)}^1/F$ if and only if the ideal class $[\mf{p}]\in \Cl(F)$ has order divisible by $2^n$; if and only if the $\Cl(F)[2]$-coset $[\mf{p}]\cdot \Cl(F)[2]$ is disjoint from $\Cl(F)^2$.

But since $F$ is imaginary, genus theory says the $2$-torsion $\Cl(F)[2] \le \Cl_2(F)$ is generated by the (ideal classes of the) ramified primes $L_1,L_2,L_3$ of $F$, defined by $(\ell_i) = L_i^2$. So by Theorem \ref{THM:classical-PGT-over-rationals}, $\mf{p}$ (of ideal norm $p\ZZ$ if $(p)$ splits in $F/\QQ$, and $p^2\ZZ$ if $(p)$ is inert) splits into exactly $2$ primes in $F_{(2)}^1/F$ if and only if $(p)$ splits in $F/\QQ$ and $\eps p \prod \ell_i^{t_i}$ is never (for any choice $\eps=\pm1$ and $t_i\in\FF_2$) a local norm at all four ramified primes $\infty,\ell_1,\ell_2,\ell_3$. By standard computations (for instance, based on Hensel's lemma or Hilbert symbols), this is the case unless and only unless $(p)$ is inert or $(\tqchar{\ell_1^*}{p}, \tqchar{\ell_2^*}{p}, \tqchar{\ell_3^*}{p}) \in\{(+1,+1,+1),(-1,-1,+1)\}$; in other words, if and only if $(\tqchar{\ell_1^*}{p}, \tqchar{\ell_2^*}{p}, \tqchar{\ell_3^*}{p}) \in\{(+1,-1,-1),(-1,+1,-1)\}$, as desired.
\end{proof}

We now raise a natural extension question of independent interest.

\begin{question}
\label{QUES:CPGT-based-conjecture-prime-splitting-Hilbert-class-fields}

Let $F/\QQ$ be a quadratic field with prime discriminant factorization $\Delta_F = \ell_1^*\cdots\ell_t^*$, with $t\ge1$. Take a rational prime $p\nmid \Delta_F$, and any prime $\mf{p}\mid p$ in $F$. Then is it true that the condition ``the ideal class $[\mf{p}]$ has order divisible by the largest possible $2$-power order in $\Cl(F)$'' depends only on the prime discriminants $\ell_1^*,\ldots,\ell_t^*$ and the Kronecker symbols $\tqchar{\ell_i^*}{p}$ for $1\le i\le t$? What if we work with the narrow class group $\Cl^+(F)$ instead of $\Cl(F)$? If this is false, is there an interesting correct statement along these lines?
\end{question}

\begin{remark}
The methods from Theorems \ref{THM:CPGT-F=Q(p4*p5*)-real-positive-p4*,p5*} and \ref{THM:CPGT-F=Q(p3*p4*p5*)-imaginary} only apply when the $2$-class group $\Cl_2(F)$ is a direct sum of finitely many copies of $C_2$ and $C_{2^n}$, for some $n\ge1$, and the $2$-torsion $\Cl(F)[2]$ is generated by the ideal classes of the ramified primes in $F/\QQ$. In this special case, the order divisibility condition in question actually only depends on the Kronecker symbols $\tqchar{\ell_i^*}{\ell_j}$ and $\tqchar{\ell_i^*}{p}$ for distinct indices $i,j\in\{1,\ldots,t\}$.

However, based on SAGE data (not assuming any conjectures such as GRH), this stronger correspondence does not hold in general; for example, take $F$ with $t=4$ and $(\ell_1^*,\ell_2^*,\ell_3^*,\ell_4^*) = (+5,+29,+109,\ell_4)$ with all $\ell_i\equiv1\pmod{4}$; then the patterns differ for $\ell_4 = 661$ and $\ell_4 = 2609$ although the \Redei matrices $R_F$ do not. (In both cases, SAGE says $\Cl(F) \simeq C_8\oplus C_4\oplus C_2$ and $\Cl^+(F) \simeq C_8 \oplus C_4\oplus C_4$.)
\end{remark}

\begin{markednewpar}\leavevmode\end{markednewpar}\begin{arxiv}
\lstset{language = Python, basicstyle  = \ttfamily, breaklines=true}
\begin{lstlisting}
R.<x> = PolynomialRing(QQ)
p1 = +5; p2 = +29; p3 = +109
#p4 = +661
p4 = +2609
F.<d> = (x^2 - p1*p2*p3*p4).splitting_field(); F
print F.class_group(); print F.narrow_class_group()
Cl = F.class_group()

PrimeCount = 0; OrderCount = [0 for j in range(100)] #keep track of distribution of orders

for i in range(100000):
    p = i
    if is_prime(p) and (kronecker_symbol(p1,p) == +1) and (kronecker_symbol(p2,p) == -1) and (kronecker_symbol(p3,p) == -1) and (kronecker_symbol(p4,p) == +1): #exactly orders 2,4 occur for 661; exactly order 8 occurs for 2609
        PrimeCount += 1
        J = F.ideal(p); #J
        I = J.prime_factors()[0]; #J.prime_factors()
        OrderI = Cl(I).order(); #OrderI
        OrderCount[OrderI] += 1

print PrimeCount
print OrderCount
\end{lstlisting}
\end{arxiv}


\begin{remark}
It may be helpful to consider the criteria from Waterhouse \cite{Waterhouse} (see also Hasse \cite{Hasse} and Lu \cite{Lu}) and Kolster \cite{Kolster} for ideal classes to be $4$th powers, and so on.
\end{remark}

\section{Further directions}
\label{SEC:improvements-of-GS?}

Perhaps one can simply apply Propositions \ref{PROP:main-idea-extension-ramification-quadratic}, \ref{PROP:Schmithals-2-class-field-splitting-idea}, or close variants in cleverer ways, using constructions related to genus fields, relative genus fields (cf. Cornell \cite{Cornell}), decomposition fields (cf. Mouhib \cite{Mouhib}), or narrow Hilbert class fields. Remark \ref{RMK:non-Galois-choices-of-L?} suggests it may also help to choose non-Galois fields $L/\QQ$ in Proposition \ref{PROP:main-idea-extension-ramification-quadratic} or \ref{PROP:Schmithals-2-class-field-splitting-idea}.

It may also be possible to use the group-theoretic classification methods of Benjamin--Lemmermeyer--Snyder \cite{BLS}/Boston--Nover \cite{BN}/Bush \cite{Bush} (see also Boston's survey \cite{BostonSurvey}), where the extra structure may help us push past the ``near miss'' Golod--Shafarevich failures encountered in (for instance) Sections \ref{SEC:key-lemmas} and \ref{SEC:concrete-new-2-towers-results}.

Or perhaps we need better machinery: in view of older results of Koch--Venkov \cite{KV}/Schoof \cite{Schoof}/Maire \cite{Maire}, Koch \cite{KochCentral,KochConcrete,Koch}\begin{arxiv} (see also Lemmermeyer \cite[Section 1.9.4. Galois groups of class field towers]{LemmermeyerCFT})\end{arxiv}, and Gasch\"{u}tz--Newman \cite{GN}---based on the Zassenhaus filtration of group algebras over $\FF_2$\begin{arxiv} (see also Jennings \cite{Jennings}, the book \cite{NSW}, and McLeman \cite{McLeman})\end{arxiv}, and the study of central extensions\begin{arxiv} (see also \Frohlich's book \cite{Frohlich} and Horie \cite{Horie})\end{arxiv}---we ask the following.

\begin{question}
\label{QUES:stronger-4-rank-Golod-Shaf?}

Are there stronger yet usable versions of the Golod--Shafarevich inequality for a number field $K$ cleanly incorporating, for instance, the $4$-rank $d_4 \Cl(K)$? Results in special cases could still be useful.
\end{question}

We now suggest how one might apply such a strengthening to Martinet's question.

\begin{remark}
Recall, in the notation of Proposition \ref{PROP:main-idea-extension-ramification-quadratic}, the failure of our methods in Example \ref{EX:source-of-failure-and-4-rank-direction-motivation} due to $\rank_2\Cl(L(\sqrt{\Delta_K})) = 7 < 8 = 2 + 2\sqrt{8+1}$ (based on SAGE computations assuming GRH); it is not the exact example that matters, but rather that these ``near misses'' occur all the time when one tries to directly use Golod--Shafarevich, which is based only on $2$-rank. So an affirmative answer to Question \ref{QUES:stronger-4-rank-Golod-Shaf?} might allow us to incorporate Remark \ref{RMK:improvement-4-rank-relative-genus-theory} on $4$-rank information from the ambiguous class number formula, or Yue's generalized \Redei matrix $4$-rank criterion \cite{Yue} when $L$ has odd class number.
\end{remark}








\section{Acknowledgements}
This research was conducted at the University of Minnesota Duluth REU and was supported by NSF grant 1358695 and NSA grant H98230-13-1-0273. The author thanks Joe Gallian for suggesting the problem and a careful proofreading of the manuscript; Ian Whitehead for encouragement, a thorough reading of the paper, providing Example \ref{EX:matrix-C-examples}, and insightful mathematical and organizational comments; Elliot Benjamin for clarifying some typos in \cite[Questions 4 and 5]{Ben2015}; Farshid Hajir for sending a copy of \cite{HajirCorrection}; David Moulton for helpful discussions; and Bjorn Poonen for teaching helpful algebraic number theory courses at MIT in the Fall 2014 and Spring 2015 semesters. The author is indebted to the anonymous referee for a thorough reading and many helpful corrections and suggestions, many of which have been incorporated into this version of the paper. The author would also like to thank Yutaka Sueyoshi \cite{Sueyoshi5neg,Sueyoshi3neg} and Elliot Benjamin \cite{Ben2015} for their thorough analysis of \Redei matrices (in some sense identifying the ``hardest cases'' remaining), and Franz Lemmermeyer for his excellent surveys and expositions on a wide variety of class field theory topics and his English translation of \cite{KochCentral}. Finally, the author thanks \href{https://cloud.sagemath.com/}{SageMathCloud} and \href{https://sagecell.sagemath.org/}{SageMathCell} for enabling class group computations\begin{arxiv}, Google Scholar `allintitle: 2-class group' for the serendipitous pointers to \cite{DMW} and \cite{Lopez},\end{arxiv} and \href{https://www.overleaf.com/read/rcfjpsqrxvgt}{Overleaf} for speeding up the writing process.


\end{document}